\newcommand{\onefig}[1]{
	\begin{figure} %[htbp]
		\centering
		#1
	\end{figure}
}
\newcommand{\henbi}[2]{\frac{\partial #1}{\partial #2}}
\newcommand{\R}{\mathbb R}
\newcommand{\set}[1]{\left\{#1\right\}}
\newcommand{\norm}[1]{\left\lVert#1\right\rVert}
\newcommand{\eps}{\varepsilon}
\newcommand{\abs}[1]{\left\lvert#1\right\rvert}
\newcommand{\mynorm}[1]{\norm{#1}}
\newcommand{\sobw}[2]{
	\ifx #10
	L^{#2}
	\else 
	\if #22
	H^{#1}
	\else
	W^{#1,#2}
	\fi
	\fi
}
\newcommand{\normsob}[3]{
	\if #32
	\norm{#1}_{#2}
	\else
	\norm{#1}_{#2,#3}
	\fi
}
\newcommand{\snormsob}[3]{
	\if #32
	\abs{#1}_{#2}
	\else
	\abs{#1}_{#2,#3}
	\fi
}
\newcommand{\normi}[1]{\lVert#1\rVert}
\newcommand{\absi}[1]{\lvert#1\rvert}
\newcommand{\normsobi}[3]{
	\if #32
	\normi{#1}_{#2}
	\else
	\normi{#1}_{#2,#3}
	\fi
}
\newcommand{\snormsobi}[3]{
	\if #32
	\absi{#1}_{#2}
	\else
	\absi{#1}_{#2,#3}
	\fi
}
\newcommand{\schemezero}{Scheme LG}
\newcommand{\jurai}{Scheme LG$^\prime$}
\newcommand{\zettai}{Scheme LG-LLV}
\spnewtheorem*{Thschemezero}{Scheme LG}{\bf}{}
\spnewtheorem*{Thschemejurai}{Scheme LG$^\prime$}{\bf}{}
\spnewtheorem*{Thschemezettai}{Scheme LG-LLV}{\bf}{}
\spnewtheorem{hypo}{Hypothesis}{\bf}{}
\spnewtheorem{prob}{Problem}{\bf}{}
\spnewtheorem{exam}{Example}{\bf}{}
\newcommand{\hypoMiniNum}{1$^\prime$}
\newcommand{\lemmMiniNum}{10$^\prime$}
\spnewtheorem*{hypoMini}{Hypothesis \hypoMiniNum}{\bf}{} 
\spnewtheorem*{lemmMini}{Lemma \lemmMiniNum}{\bf}{}
\newcommand{\donemodify}[1]{#1}
\newcommand{\figwidthbasic}{40mm}
\newcommand{\apoin}{\alpha_1}
\newcommand{\aintone}{\alpha_{20}}
\newcommand{\ainteh}{\alpha_{21}} 
\newcommand{\aintfour}{\alpha_{22}}
\newcommand{\ainvzero}{\alpha_{30}}
\newcommand{\ainvone}{\alpha_{31}}
\newcommand{\astokes}{\alpha_{50}}
\newcommand{\adr}{\alpha_{6}}
\newcommand{\aon}{\alpha_{51}}
\newcommand{\htheo}{h_0}
\newcommand{\htwo}{\htheo}
\newcommand{\czero}{c_0}
\newcommand{\done}{1/4}
\newcommand{\cprop}{c_*}
\newcommand{\beinfsup}{\alpha_4}
\newcommand{\be}{\beta}
\newcommand{\beone}{\beta_{21}^*}
\newcommand{\betwo}{\beta_{22}^*}
\newcommand{\ctheoone}{c_1}
\newcommand{\ctheotwo}{c_2}
\newcommand{\ctheothree}{c_3}
\newcommand{\cuint}{\aintone}
\newcommand{\meas}[1]{\operatorname{meas}(#1)}
\newcommand{\triseq}{\{\mathcal T_h\}_{h\downarrow 0}}
\newcommand{\lag}{\Pi_h}
\newcommand{\lagk}[1]{\lag^{(#1)}}
\newcommand{\schemeint}{\lagk 1}
\newcommand{\pro}[1]{\widehat{#1}_h}
\newcommand{\bwd}{\overline D_{\Delta t}}
\newcommand{\jac}[2]{\det\left(\henbi{#1}{#2}\right)}
\newcommand{\pk}[1]{\mathrm{P}_{#1}}
\newcommand{\lih}{\ell^\infty(H^1_0)}
\newcommand{\ltl}{\ell^2(L^2)}
\newcommand{\lil}{\ell^\infty(L^2)}
\newcommand{\elih}{E_{\ell^\infty(H^1_0)}}
\newcommand{\eltl}{E_{\ell^2(L^2)}}
\newcommand{\elil}{E_{\ell^\infty(L^2)}}
\newcommand{\mini}{\pk1\mathrm{+}/\pk1}
\newcommand{\mixn}[1]{\lVert #1 \rVert}
\newcommand{\mone}{\Large$\bullet$\normalsize}
\newcommand{\mtwo}{$\blacksquare$}
\newcommand{\mthree}{$\blacktriangle$}
\newcommand{\mfour}{$\blacktriangledown$}
\newcommand{\mfive}{\Large$\circ$\normalsize}
\newcommand{\msix}{$\square$}
\newcommand{\mseven}{$\vartriangle$}
\newcommand{\meight}{$\triangledown$}
\newcommand{\columnheight}{3mm}
\journalname{myjournal}
\begin{document}
\title{
A Lagrange--Galerkin scheme with a locally linearized velocity for the Navier--Stokes equations
}
%\subtitle{Do you have a subtitle?\\ If so, write it here}
\titlerunning{A Lagrange--Galerkin scheme with a locally linearized velocity}        % if too long for running head
\author{ Masahisa Tabata         \and
        Shinya Uchiumi
}
%\authorrunning{Short form of author list} % if too long for running head
\institute{M. Tabata \at
	Department of Mathematics, Waseda University,
	3-4-1, Ohkubo, Shinjuku, Tokyo 169-8555, Japan \\
	\email{tabata@waseda.jp}           %  \\
%	\emph{Present address:} of F. Author  %  if needed
    \and
	S. Uchiumi 
	\at
	Research Fellow of Japan Society for the Promotion of Science \\
	Graduate School of Fundamental Science and Engineering, Waseda University,
	3-4-1, Ohkubo, Shinjuku, Tokyo 169-8555, Japan \\
	\email{su48@fuji.waseda.jp} 
}
\date{\today}
%\date{Received: date / Accepted: date}
% The correct dates will be entered by the editor
% % %
\maketitle
% % %
\begin{abstract}
We present a Lagrange--Galerkin scheme free from numerical quadrature for the Navier--Stokes equations.
Our idea is to use a locally linearized velocity and the backward Euler method in finding the position of fluid particle at the previous time step.    
Since the scheme can be implemented exactly as it is, the theoretical stability and convergence results are assured.
While the conventional Lagrange--Galerkin schemes may encounter the instability caused by numerical quadrature errors, the present scheme is genuinely stable.
For the $\pk 2/\pk 1$- and $\mini$-finite elements 
optimal error estimates are proved in $\ell^\infty(H^1)\times \ell^2(L^2)$ norm for the velocity and pressure.
We present some numerical results, which reflect these estimates and also show the genuine stability of the scheme.    
\keywords{Lagrange--Galerkin scheme \and Finite element method \and Navier--Stokes equations \and Exact integration}
% \PACS{PACS code1 \and PACS code2 \and more}
\subclass{65M12 \and 65M25 \and 65M60 \and 76D05 \and 76M10}
\end{abstract}
% % %
\section{Introduction}
The purpose of this paper is to present a Lagrange--Galerkin scheme free from numerical quadrature for the Navier--Stokes equations and to prove the convergence.  
The Lagrange--Galerkin method, 
which is also called characteristics finite element method or Galerkin-characteristics method,
is a powerful numerical method for flow problems, having such advantages that it is robust for convection-dominated problems and that the resultant matrix to be solved is symmetric.  
It has, however, a drawback that it may lose the stability when numerical quadrature is employed to integrate composite function terms that characterize the method.     
Our scheme presented here overcomes this drawback.    

Lagrange--Galerkin schemes for the Navier--Stokes equations have been developed in  \cite{AchdouGuermond2000,BermejoSastreSaavedra2012,BoukirEtal,NotsuTabata2009,2015_Notsu-T2,Pironneau1982,Priestley1994,Suli}; see also bibliography therein.
After convergence analysis was done successfully by Pironneau \cite{Pironneau1982} in a suboptimal rate, the optimal convergence result was obtained by S\"uli \cite{Suli}. 
Optimal convergence results by Lagrange--Galerkin schemes were extended to the multi-step method by Boukir et al. \cite{BoukirEtal}, 
to the projection method by Achdou--Guermond \cite{AchdouGuermond2000}
and 
to the pressure-stabilized method by Notsu-Tabata \cite{2015_Notsu-T2}.  
All these results of the stability and convergence are proved under the condition that the integration of the composite function terms is computed exactly.  
Since it is difficult to perform the exact integration in real problems, numerical quadrature is usually employed.    
It is, however, reported that instability may occur caused by numerical quadrature error for convection-diffusion problems in \cite{MPS,Priestley1994,Tabata2007,TSTeng} .  
We observe such instability occurs for the Navier-Stokes equations by numerical examples in this paper.  

Several methods have been studied to avoid the instability in \cite{BermejoSastreSaavedra2012,MPS,PironneauTabata2010,Priestley1994,TSTeng}. 
The map of a fluid particle from the present position to the position a time increment  $\Delta t$ before (the position is often called foot along the trajectory) is simplified.  
To find the foot of a particle is nothing but to solve a system of ordinary differential equations (ODEs).  
Morton et al. \cite{MPS} solved the ODEs only at the centroids of the elements, and Priestley \cite{Priestley1994} did only at the vertices of the elements.  
The map of the other points is approximated by linear interpolation of those values.    
It becomes possible to perform the exact integration of the composite function terms with the simplified map.  
Bermejo et al. \cite{BermejoSastreSaavedra2012} used the same simplified map as  \cite{Priestley1994} to employ a numerical quadrature of high accuracy to the composite function terms for the Navier-Stokes equations.   
Tanaka et al. \cite{TSTeng} and Tabata--Uchiumi \cite{TabataUchiumi1} approximated the map by a locally linearized velocity and the backward Euler approximation to solve the ODEs for convection-diffusion problems.  
The approximate map makes possible the exact integration of the composite function terms.  

In this paper we prove the convergence
of a Lagrange--Galerkin scheme with the same approximate map as \cite{TabataUchiumi1,TSTeng} in the $\pk2/\pk1$- or $\mini$-element for the Navier--Stokes equations.    
Since we neither solve the ODEs nor use numerical quadrature, our scheme can be precisely implemented to realize the theoretical results.  
It is, therefore, a genuinely stable Lagrange--Galerkin scheme. 
Our convergence results are best possible for the velocity and pressure in $\ell^\infty(H^1)\times \ell^2(L^2)$-norm for both elements as well as for the velocity in $\ell^\infty(L^2)$-norm in the $\mini$-finite element.

The contents of this paper are as follows.  
In the next section we describe the Navier--Stokes problem and some preparation.
In Section \ref{sec:scheme}, after recalling the conventional Lagrange--Galerkin scheme, we present our Lagrange--Galerkin scheme with a locally linearized velocity. 
In Section \ref{sec:mainResults} we show convergence results, which are proved in Section \ref{sec:proof}.
In Section \ref{sec:numex} we show some numerical results, which reflect the theoretical convergence orders and the robustness of the scheme for high Reynolds number problems.  
In Section \ref{sec:conclusion} we give the conclusions.
\section{Preliminaries}
We state the problem and prepare the notation used throughout this paper.

Let $\Omega$ be a polygonal or polyhedral domain of $\R^d ~ (d=2,3)$ and
$T>0$ a time.
We use the Sobolev spaces $L^p(\Omega)$ with the norm $\normsob{\cdot}{0}{p}$, 
$\sobw{s}{p}(\Omega)$ and $W^{s,p}_0(\Omega)$ 
with the norm $\normsob{\cdot}{s}{p}$ and the semi-norm $\snormsob{\cdot}{s}{p}$ 
for $p=1, 2,\infty$ 
and a positive integer $s$.
When $p=2$, we write $H^s(\Omega) = W^{s,2}(\Omega)$ simply and drop the subscript $2$ in the corresponding norms.
For the vector-valued function $w\in W^{1,\infty}(\Omega)^d$ we define the semi-norm $\snormsobi{w}{1}{\infty}$ by
\begin{equation*}
	\biggl\lVert\biggl\{\sum_{i,j=1}^d \left( \henbi{w_i}{x_j} \right)^2 \biggr\}^{1/2}\biggr\rVert_{0,\infty}.
\end{equation*}
$L^2_0(\Omega)$ is the subspace of $L^2(\Omega)$ with the zero mean.
The parenthesis $(\cdot , \cdot)$ shows the $L^2(\Omega)^i$-inner product for $i=1, d$ or $d\times d$.
For $w\in L^2(\Omega)^d$, $\norm{w}_{-1}$ stands for the dual norm 
\begin{equation*}
	\sup_{v\in H^1_0(\Omega)^d\setminus \{0\}} \frac{(w,v)}{\norm{v}_{1}}.
\end{equation*}
For a Sobolev space $X(\Omega)$ 
we use the abbreviations 
$H^m(X)=H^m(0,T;X(\Omega))$ and $C(X)=C([0,T];X(\Omega))$.
\begin{equation*}
\begin{split}
Z^m(t_1,t_2) &\equiv \set{f\in H^j(t_1, t_2;H^{m-j}(\Omega)^d); j=0,\dots,m,\mynorm{f}_{Z^m(t_1,t_2)}<\infty},\\
\mynorm{f}_{Z^m(t_1,t_2)} &\equiv 
\biggl\{ \sum_{j=0}^m \mynorm{f}_{H^j(t_1,t_2;H^{m-j}(\Omega)^d)}^2 \biggr\}^{1/2}
\end{split}
\end{equation*}
and denote $Z^m(0,T)$ by $Z^m$.

We consider the Navier--Stokes equations:
find $(u,p):\Omega \times (0,T) \to \R^d \times \R$
such that
\begin{equation}\label{NS}
	\begin{split}
	\frac{Du}{Dt}
	- \nu \Delta u + \nabla p = f & \quad \text{in~} \Omega \times (0,T),\\
	\nabla \cdot u = 0 & \quad \text{in~} \Omega \times (0,T),\\
	u = 0 & \quad \text{on~} \partial \Omega \times (0,T),\\
	u = u^0 & \quad \text{in~} \Omega \text{~at~} t=0,
	\end{split}
\end{equation}
where 
$\partial \Omega$ is the boundary of $\Omega$,
$\frac{Du}{Dt} \equiv \henbi{u}{t}+(u \cdot \nabla) u$ is the material derivative and
$\nu>0$ is a viscosity. 
Functions $f\in C(L^2)$ 
and $u^0:\Omega \to \R^d$ are given.

We define the bilinear forms $a$ on $H^1_0(\Omega)^d\times H^1_0(\Omega)^d$ and $b$ on $H^1_0(\Omega)^d\times L^2_0(\Omega)$ by 
\begin{equation*}
a(u,v) \equiv \nu(\nabla u,\nabla v), \quad
b(v,q) \equiv -(\nabla \cdot v,q).
\end{equation*}
Then, we can write the weak form of (\ref{NS}) as follows:  
find $(u,p) : (0,T)\to H^1_0(\Omega)^d \times L^2_0(\Omega)$ such that for $t\in (0,T)$, 
\begin{subequations}\label{eq:NSweak}
\begin{align}
&&	\left(\frac{Du}{Dt}(t),v\right) + a(u(t),v)+b(v,p(t))&=(f(t),v), & \quad \forall v\in H^1_0(\Omega)^d, \\
&&	b(u(t),q)&=0, & \quad \forall q\in L^2_0(\Omega),
\end{align}
\end{subequations}
with $u(0)=u^0$.

Let $u$ be smooth.
The characteristic curve $X(t; x,s)$ 
is defined by the solution of the system of the ordinary differential equations, 
\begin{subequations}\label{odes}
\begin{align}
	\frac{dX}{dt}(t; x,s)&=u(X(t;x,s),t), \quad t<s,\\
	X(s;x,s)&=x.
\end{align}
\end{subequations}
Then, we can write the material derivative term $(\henbi{}{t}+u\cdot \nabla)u$ as follows: 
\begin{equation*}
\left(\henbi{u}{t} + (u \cdot \nabla) u \right)(X(t),t)=\frac{d}{dt} u(X(t),t).
\end{equation*}

Let $\Delta t>0$ be a time increment.  
For $w:\Omega \to \R^d$ we define the mapping $X_1(w):\Omega \to \R^d$ by
\begin{equation}\label{eq:x1def}
(X_1(w))(x) \equiv x - w(x)\Delta t.
\end{equation}
\begin{remark}
The image of $x$ by $X_1(u(\cdot,t))$ is nothing but the approximate value of $X(t-\Delta t;x,t)$ obtained by solving \eqref{odes} by the backward Euler method.
\end{remark}

Let $N_T \equiv \lfloor T/\Delta t \rfloor$, 
$t^n \equiv n\Delta t$ and 
$\psi^n \equiv \psi(\cdot,t^n)$ for a function $\psi$ defined in $\Omega \times (0,T)$.
For a set of functions $\psi=\set{\psi^n}_{n=0}^{N_T}$
and a Sobolev space $X(\Omega)$, 
two norms 
$\mynorm{\cdot}_{\ell^\infty(X)}$ and $\mynorm{\cdot}_{\ell^2(n_1, n_2;X)}$
are defined by
\begin{equation*}
\begin{split}
	\norm{\psi}_{\ell^\infty(X)} &\equiv 
	\max \left\{ \norm{\psi^n}_{  X(\Omega)  };n=0,\dots ,N_T \right\}, \\
	\norm{\psi}_{\ell^2(n_1,n_2;X)} &\equiv 
	\biggl(
	\Delta t \sum_{n=n_1}^{n_2} \norm{\psi^n}_{X(\Omega)}^2  
	\biggr)^{1/2}, 
\end{split}
\end{equation*}
and $\mynorm{\psi}_{\ell^2(1,N_T;X)}$ is denoted by $\mynorm{\psi}_{\ell^2(X)}$. 
The backward difference operator $\bwd$ is defined by
\begin{equation*}
	\bwd \psi^n \equiv \frac{\psi^n-\psi^{n-1}}{\Delta t}.
\end{equation*}
% % %
%
%

Let $\mathcal T_h$ be a triangulation of $\bar \Omega$ and 
$h \equiv \max_{K\in \mathcal T_h} \operatorname{diam}(K)$ the maximum element size.
Throughout this paper we consider a regular family of triangulations $\{ \mathcal{T}_h  \}_{h\downarrow 0}$.  
Let $V_h \times Q_h \subset H_0^1(\Omega)^d \times L_0^2(\Omega)$ be the $\pk2$/$\pk1$- or $\mini$-finite element space, which is called Hood-Taylor element or MINI element \cite{GiraultRaviart,MINI}.
Let 
\[
  \lagk{1} : C(\bar \Omega)^d \cap H^1_0(\Omega)^d \to V_h
\] 
be the Lagrange interpolation operator to the 
$\pk 1$-finite element space.
Let $(\pro{w},\pro{r})\equiv \Pi_h^{S}(w,r) \in V_h \times Q_h$ be the Stokes projection of $(w,r) \in H^1_0(\Omega)^d \times L^2_0(\Omega)$ defined by 
\begin{subequations}\label{eq:stokesproj}
\begin{align}
&& a(\pro w, v_h)+b(v_h,\pro r) &= a(w,v_h)+b(v_h,r), \quad &\forall v_h\in V_h, \\
&& b(\pro w,q_h)&=b(w,q_h),  &\forall q_h \in Q_h.
\end{align}
\end{subequations}
We denote by $(\Pi_h^{S}(w,r))_1$ the first component $\pro{w}$ of $\Pi_h^{S}(w,r)$.  

The symbol $\circ$ stands for the composition of functions, e.g., $(g\circ f)(x) \equiv g(f(x))$.

\section{A Lagrange--Galerkin scheme with a locally linearized velocity}\label{sec:scheme} 
The conventional Lagrange--Galerkin scheme, which we call \schemezero, is described as follows.
\begin{Thschemezero}
Let $u_h^0 = (\Pi^{S}_h (u^0,0))_1$. 
Find $\set{(u_h^n, p_h^n)}_{n=1}^{N_T} \subset V_h \times Q_h$ such that 
\begin{subequations}\label{eq:scheme0}
\begin{align*}
&&	\left( \frac{u_h^{n} - u_h^{n-1}\circ  X_1(u_h^{n-1}) 	}{\Delta t}, v_h\right) + a(u_h^{n}, v_h)
	+ b(v_h, p_h^{n}) 	&= (f^{n},v_h), \quad  	&\forall v_h \in V_h, \\
&&	b(u_h^{n}, q_h) &= 0, \quad 	&\forall q_h \in Q_h,
\end{align*}
\end{subequations}
for $n=1,\dots, N_T$.
\end{Thschemezero}
\begin{remark}
S\"uli \cite{Suli} used the exact solution $X_h^{n-1}$ of the system of ordinary differential equations, 
\begin{subequations}\label{eq:xhSuli} 
\begin{align}
	\frac{d X_h^{n-1}}{dt} (t;x,t^n) &= u_h^{n-1}(X_h^{n-1}(t;x,t^n),t), \quad t^{n-1}<t<t^n,\\
	X_h^{n-1}(t^n;x,t^n) &= x
\end{align}
\end{subequations}
instead of $X_1(u_h^{n-1})$.
\end{remark}
By a similar way to \cite{Suli} combined with \cite{BoukirEtal}, error estimates
\begin{subequations}
\begin{align}
	\mynorm{u_h-u}_{\ell^\infty(H^1)}, \mynorm{p_h-p}_{\ell^2(L^2)} &\leq c(h^k+\Delta t),\\
	\mynorm{u_h-u}_{\ell^\infty(L^2)} &\leq c(h^{k+1}+\Delta t), \label{eq:estimeteScheme0ON}
\end{align}
\end{subequations}
can be proved, where $k=2$ for $\pk2/\pk1$-element and $k=1$ for $\mini$-element.
In the estimate above,
the composite function term
	$( u_h^{n-1} \circ X_1(u_h^{n-1}), v_h )$
is assumed to be exactly integrated.

Although the function $u_h^{n-1}$ is a polynomial on each element $K$, the composite function $u_h^{n-1} \circ X_1(u_h^{n-1})$ is not a polynomial on $K$ in general since the image $X_1(u_h^{n-1})$ of an element $K$ may spread over plural elements.
Hence, it is hard to calculate the composite function term $( u_h^{n-1} \circ X_1(u_h^{n-1}), v_h )$ exactly.
In practice, the following numerical quadrature has been used.
Let 
$g:K \to \R$ be a continuous function. 
A numerical quadrature $I_h[g;K]$ of $\int_K g ~ dx$ is  defined by 
\begin{equation*}\label{eq:numericalQuadrature}
I_h[g;K] \equiv \meas{K} \sum_{i=1}^{N_q} w_i g(a_i),
\end{equation*}
where $N_q$ is the number of quadrature points and $(w_i, a_i) \in \R \times K$ is a pair of the weight and the point for $i=1, \dots, N_q$.
We call the practical scheme using numerical quadrature \jurai.
\begin{Thschemejurai}
Let $u_h^0 = (\Pi^{S}_h (u^0,0))_1$. 
Find $\set{(u_h^n, p_h^n)}_{n=1}^{N_T} \subset V_h \times Q_h$ such that 
\begin{align*}
	\frac{1}{\Delta t} (u_h^{n}, v_h) 
	- \frac{1}{\Delta t} 
	\sum_{K \in \mathcal T_h} I_h[(u_h^{n-1}\circ X_1(u_h^{n-1}))\cdot v_h;K] 
	& &\\
	+ a(u_h^{n}, v_h)
	+ b(v_h, p_h^{n}) 
	&= (f^{n},v_h), &\forall v_h \in V_h, \\
	b(u_h^{n}, q_h) &= 0, &\forall q_h \in Q_h, 
\end{align*}
for $n=1,\dots, N_T$.
\end{Thschemejurai}
For convection-diffusion equations it has been reported that numerical quadrature causes the instability  \cite{MPS,Priestley1994,Tabata2007,TabataFujima2006,TabataUchiumi1,TSTeng}.  
For the Navier-Stokes equations we present numerical results showing the instability of {\jurai} in Section \ref{sec:numex}. 

We now present our Lagrange-Galerkin scheme with a locally linearized velocity.  
It is free from quadrature and exactly computable.
We call it \zettai.  
\begin{Thschemezettai}
Let $u_h^0 = (\Pi^{S}_h (u^0,0))_1$.
Find $\set{(u_h^n, p_h^n)}_{n=1}^{N_T} \subset V_h \times Q_h$ such that 
\begin{subequations}\label{eq:scheme2}
\begin{align}
\!\!\!\!\!\!
\biggl( \frac{u_h^{n} - u_h^{n-1}\circ  X_1(\schemeint u_h^{n-1}) }{\Delta t}, v_h\biggr)
	+ a(u_h^{n}, v_h) 	+ b(v_h, p_h^{n}) 	&= (f^{n},v_h),  &\forall v_h \in V_h, \\
\!\!\!\!\!\!
b(u_h^{n}, q_h) &= 0, &\forall q_h \in Q_h,
\end{align}
\end{subequations}
for $n=1\dots, N_T$.
\end{Thschemezettai}

In the above scheme the locally linearized velocity $\lagk 1 u_h^{n-1}$ is used in place of the original velocity $u_h^{n-1}$.  
The error caused by the introduction of the approximate velocity $\lagk 1 u_h^{n-1}$ is evaluated properly in Theorems \ref{theo:mainConvergence} and \ref{theo:mini} in the next section.  
The following proposition assures that
the integration $(u_h^{n-1}\circ X_1(\lagk 1 u_h^{n-1}),v_h)$ can be calculated exactly. %\addedfour{\cite{TabataUchiumi1,TSTeng}}.
\begin{proposition}\label{exactcomp}
  Let $u_h$, $v_h\in V_h$ and $w\in W^{1,\infty}_0(\Omega)^d$. 
  Suppose 
  $ \aintone \Delta t \snormsobi{w}{1}{\infty} <1$,
  where  $\aintone$ is the constant defined in \eqref{eq:aintone} below.
  Then, $\int_{\Omega} (u_h \circ X_{1}(\lagk 1 w)) \cdot v_h \ dx$ is exactly computable.
\end{proposition}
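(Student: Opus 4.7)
The plan is to fix an element $K\in\mathcal T_h$ and analyze $X_1(\lagk 1 w)|_K$, showing it is an injective affine map onto a simplex, so that the image $K':=X_1(\lagk 1 w)(K)$ can be cut against the background triangulation into finitely many convex polygons on which the integrand is polynomial and hence exactly integrable by a sufficiently accurate quadrature (or by triangulating and using Gauss rules exact for the resulting degree).

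First I would observe that, because $\lagk 1 w$ is $\pk 1$ on $K$, the restriction $X_1(\lagk 1 w)|_K:x\mapsto x-\Delta t\,(\lagk 1 w)(x)$ is affine, with Jacobian matrix $I-\Delta t\,\nabla(\lagk 1 w)|_K$. The stability of the Lagrange interpolant $\lagk 1$ in $W^{1,\infty}$ gives a constant $\aintone$ with $\snormsobi{\lagk 1 w}{1}{\infty}\le \aintone\snormsobi{w}{1}{\infty}$, so the hypothesis $\aintone\Delta t\snormsobi{w}{1}{\infty}<1$ forces every singular value of the Jacobian to be bounded away from $0$. Thus $X_1(\lagk 1 w)|_K$ is a bijective affine map of $K$ onto a (nondegenerate) simplex $K'$.

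Next, the intersections $K'\cap K_j$ for $K_j\in\mathcal T_h$ form a finite cover of $K'$ by convex polygons; pulling them back by the affine map $X_1(\lagk 1 w)|_K$ yields a finite partition of $K$ into convex polygons $P_j$. On each $P_j$ the composite function $u_h\circ X_1(\lagk 1 w)$ coincides with $u_h|_{K_j}\circ X_1(\lagk 1 w)|_{P_j}$, i.e., the composition of a fixed polynomial ($u_h$ restricted to $K_j$) with an affine map, hence a polynomial on $P_j$; multiplied by $v_h|_K$, which is polynomial on $K$, the product $(u_h\circ X_1(\lagk 1 w))\cdot v_h$ is polynomial on $P_j$ of a degree that is known a priori from the degrees of the chosen finite element spaces. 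Near $\partial\Omega$, where $K'$ may protrude slightly outside $\overline\Omega$, we simply extend $u_h$ by zero, which does not disturb the piecewise-polynomial structure.

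Finally, summing $\int_{P_j}(u_h\circ X_1(\lagk 1 w))\cdot v_h\,dx$ over $j$ and then over $K\in\mathcal T_h$ reconstructs the required integral, and each contribution is the integral of a polynomial over a convex polygon, which can be computed exactly by triangulating $P_j$ and applying any quadrature rule exact for the relevant polynomial degree. The main obstacle is the geometric bookkeeping for the partition $\{P_j\}$, namely locating the elements $K_j$ that meet $K'$ and computing the vertices of $K'\cap K_j$; this is a purely combinatorial-geometric task that is unambiguous once the injectivity of $X_1(\lagk 1 w)|_K$ has been secured by the smallness condition on $\Delta t\snormsobi{w}{1}{\infty}$, and it does not compromise exactness of the final integral.
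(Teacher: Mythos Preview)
Your decomposition-into-polygons argument is exactly what the paper does: it also fixes an element $K_0$, uses that $F:=X_1(\lagk 1 w)|_{K_0}$ is affine and injective, forms the pieces $E_l:=K_0\cap F^{-1}(K_l)$, and records (as its Lemma~1) that each $E_l$ is a polytope on which $u_h\circ F$ is a polynomial of the same degree as $u_h$. So the substance of your argument agrees with the paper.

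Where you diverge is at the boundary, and there you have the role of the hypotheses slightly wrong. You write that ``$K'$ may protrude slightly outside $\overline\Omega$'' and propose extending $u_h$ by zero. Under the stated hypotheses this never happens: because $w\in W^{1,\infty}_0(\Omega)^d$, the $\pk 1$ interpolant $\lagk 1 w$ also has zero trace, and the smallness condition $\aintone\Delta t\,|w|_{1,\infty}<1$ gives $\Delta t\,|\lagk 1 w|_{1,\infty}<1$, which (by a lemma the paper quotes from Rui--Tabata) makes $X_1(\lagk 1 w):\Omega\to\Omega$ a bijection. The paper invokes precisely this fact to conclude that $u_h\circ X_1(\lagk 1 w)$ is well defined on all of $\Omega$ without any extension. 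Your zero-extension workaround would produce the same number here (since $u_h\in V_h\subset H^1_0$), but it obscures why the zero-trace assumption on $w$ and the specific constant $\aintone$ appear in the statement.
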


{\it Outline of the proof.}~
When $u_h$ and $v_h$ are scalar functions, the result on the exact computability has been proved in \cite{TSTeng} and \cite[Proposition 1]{TabataUchiumi1}.   
Here, we do not repeat the proof but show only the outline.   
It is necessary that the inclusion $(X_1(\lagk{1} w))(\Omega)\subset \Omega$ holds to execute the integration of $u_h \circ X_{1}(\lagk 1 w)) \cdot v_h$ over $\Omega$.  
The condition $ \aintone \Delta t \snormsobi{w}{1}{\infty} <1$ is sufficient for it by virtue of Lemma 
\ref{lemm:jacobiest}-(i) and \eqref{eq:aintone} below.
The mapping $X_1(\schemeint w)$ is linear on each element.  
When a mapping $F$ is linear, we have the following general result for any two elements $K_0$ and $K_1$ and any polynomial $\phi_h$ of any order $k$ defined on $K_1$.     
Proposition \ref{exactcomp} is proved by applying the following lemma, whose proof is easy, cf. \cite[Lemma 1]{TabataUchiumi1}.
\begin{lemma}\label{lemm:polygon}
 Let $K_0, K_1\in \mathcal T_h$ and $F:K_0\to \R^d$ be linear and one-to-one.
 Let 
 $
	E_1 \equiv K_0 \cap F^{-1} (K_1)
 $
 and $\meas{E_1}>0$. 
 Then, the following hold.
 \begin{enumerate}
 \item $E_1$ is a polygon ($d=2$) or a polyhedron ($d=3$).
 \item $\phi_h \circ F_{|E_1}\in \pk k (E_1), \quad \forall \phi_h \in \pk k(K_1)$.
 \end{enumerate}
\end{lemma}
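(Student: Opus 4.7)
The plan is to use the half-space description of simplices together with the elementary fact that pullbacks by affine maps preserve both affine functionals and polynomial degrees. Throughout I read ``linear'' in the statement as ``affine'', consistently with the application to $X_1(\schemeint w)(x)=x-(\schemeint w)(x)\,\Delta t$, which is affine (and not strictly linear) on each element of $\mathcal T_h$.

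For (i), I would first extend $F$ uniquely to an affine bijection $\tilde F:\R^d\to\R^d$; the extension is unique because the $d+1$ vertices of the simplex $K_0$ are affinely independent, and it is a bijection of $\R^d$ because the linear part of $F$ must be invertible in order for $F$ to be one-to-one on the $d$-dimensional set $K_0$. Writing $K_1$ as the intersection of $d+1$ closed half-spaces, $K_1=\set{y\in\R^d : a_i\cdot y\le b_i,\ i=1,\dots,d+1}$, pulling back gives
\[
\tilde F^{-1}(K_1)=\set{x\in\R^d : a_i\cdot\tilde F(x)\le b_i,\ i=1,\dots,d+1},
\]
where each $x\mapsto a_i\cdot\tilde F(x)$ is itself an affine functional of $x$. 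Hence $\tilde F^{-1}(K_1)$ is a (possibly unbounded) convex polyhedron. Describing $K_0$ analogously and taking the intersection, $E_1=K_0\cap\tilde F^{-1}(K_1)$ is cut out by at most $2(d+1)$ closed half-spaces; it is bounded because $K_0$ is bounded, and it is $d$-dimensional by the hypothesis $\meas{E_1}>0$. Therefore $E_1$ is a convex polygon when $d=2$ and a convex polyhedron when $d=3$.

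For (ii), write $F(x)=Ax+b$ with $A\in\R^{d\times d}$ invertible and $b\in\R^d$, and expand $\phi_h(y)=\sum_{\abs{\alpha}\le k}c_\alpha y^\alpha$. Then
\[
(\phi_h\circ F)(x)=\sum_{\abs{\alpha}\le k}c_\alpha(Ax+b)^\alpha,
\]
and, after multinomial expansion, each $(Ax+b)^\alpha$ is a polynomial in $x$ of total degree at most $\abs{\alpha}\le k$. Hence $\phi_h\circ F$ is globally a polynomial of degree at most $k$ on $\R^d$, and its restriction to $E_1$ belongs to $\pk k(E_1)$.

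The argument is essentially routine and I do not anticipate a genuine obstacle. The one point requiring attention lies in (i): it is important to use the half-space description of $K_1$ rather than its vertex description, because $F(K_0)$ need not contain $K_1$, so vertices of $K_1$ may have no preimage in $K_0$ and a direct ``pull the vertices back'' argument would require unpleasant case analysis. Half-spaces, by contrast, pull back cleanly under affine maps, which keeps the argument short and uniform in the dimension $d$.
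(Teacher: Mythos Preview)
Your argument is correct; both parts follow cleanly from the half-space description of simplices and the closure of $\pk k$ under affine pullback, and your remark that ``linear'' here means ``affine'' is the right reading in this finite-element context. The paper itself does not supply a proof of this lemma---it simply states that the proof is easy and refers to \cite[Lemma~1]{TabataUchiumi1}---so there is no in-paper argument to compare against.
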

\begin{remark}
In the case of $d=2$, 
Priestley \cite{Priestley1994} approximated
$X_h^{n-1}(t^{n-1};x,t^n)$ in (\ref{eq:xhSuli}) 
by
\begin{equation*}
	\widetilde X_{h}(x) = B_1 \lambda_1(x)+B_2 \lambda_2(x)+B_3 \lambda_3(x), ~ x\in K_0
\end{equation*}
on each $K_0 \in \mathcal T_h$, where $B_i=X_h^{n-1}(t^{n-1};A_i,t^n)$, $\set{A_i}_{i=1}^3$ are vertices of $K_0$ and $\set{\lambda_i}_{i=1}^3$ are the barycentric coordinates of $K_0$ with respect to $\set{A_i}_{i=1}^3$.
Since $\widetilde X_{h}(x)$ is linear in $K_0$, the decomposition
\begin{align*}
	\int_{K_0} (u_h^{n-1} \circ \widetilde X_{h}) \cdot v_h \ dx
	= \sum_{l \in \Lambda(K_0)} \int_{E_l} (u_h^{n-1} \circ \widetilde X_{h}) \cdot v_h \ dx, \\
	\Lambda(K_0) \equiv \set{l; K_0\cap \widetilde X_{h}^{-1}(K_l) \not = \emptyset}, \quad
	E_l \equiv K_0 \cap \widetilde X_{h}^{-1}(K_l) 
\end{align*}
makes the exact integration possible. 
However, $B_i=X_h^{n-1}(t^{n-1};A_i,t^n)$ are the solutions of a system of ordinary differential equations and
it is not easy to solved it exactly in general since $u_h^{n-1}$ is piecewise polynomial. 
In practice, some numerical method, e.g., Runge--Kutta method, is required, which introduces another error.
\end{remark}

\section{Main results}\label{sec:mainResults}
We present the main results of error estimates for \zettai, which are proved in the next section.
We first state the result when the $\pk2/\pk1$-element is employed.
\begin{hypo}\label{hypo:NSreg}
The solution of (\ref{NS}) satisfies 
\begin{equation*}
	u\in 
	Z^2 \cap H^1(H^3), ~
	p\in H^1(H^2).
\end{equation*}
\end{hypo}
\begin{remark}\label{divu00}
Hypothesis \ref{hypo:NSreg} implies $(u,p) \in C(H^3 \times H^2)$, which yields $\nabla \cdot u^0 =0$.
\end{remark}
\begin{hypo}\label{hypo:inverse}
The sequence $\triseq$ satisfies the inverse assumption.
In addition, for each $h$, $\forall K\in \mathcal T_h$ has at least one vertex in $\Omega$.
\end{hypo}
\begin{theorem}\label{theo:mainConvergence}
Let 
$V_h\times Q_h$ be the $\pk2/\pk1$-finite element space.
Suppose Hypotheses \ref{hypo:NSreg} and \ref{hypo:inverse}. 
Then, there exist positive constants %$c$, 
$\czero$ and $\htheo$ 
such that if $h\in (0,\htheo]$ and $\Delta t \leq \czero h^{d/4}$, 
the solution $(u_h,p_h) \equiv \set{(u_h^n,p_h^n)}_{n=0}^{N_T}$ of {\zettai} exists  
and the estimates
\begin{equation*}\label{eq:mainestimate}
	\mynorm{u_h-u}_{\ell^\infty(H^1)}, \mynorm{p_h-p}_{\ell^2(L^2)} \leq \ctheoone (h^2+\Delta t) %.
\end{equation*}
hold, where $\ctheoone$ is a positive constant independent of $h$ and $\Delta t$.
\end{theorem}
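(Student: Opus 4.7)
The plan is to split the error via the Stokes projection. Set
$\pro{u}^n \equiv (\Pi_h^{S}(u^n,p^n))_1$, $\pro{p}^n \equiv (\Pi_h^{S}(u^n,p^n))_2$, $e_h^n \equiv u_h^n - \pro{u}^n$, $\eta^n \equiv u^n - \pro{u}^n$, and $\epsilon_h^n \equiv p_h^n - \pro{p}^n$. Standard Stokes-projection bounds give $\norm{\eta}_{\ell^\infty(H^1)}\le ch^2$ and analogous $\ell^2(L^2)$ bounds on the pressure part, so the theorem reduces to proving $\norm{e_h}_{\ell^\infty(H^1)}+\norm{\epsilon_h}_{\ell^2(L^2)}\le c(h^2+\Delta t)$. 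Subtracting \zettai{} from the weak form \eqref{eq:NSweak} evaluated at $t^n$, and using \eqref{eq:stokesproj} on the $a$- and $b$-terms, produces an error equation
\begin{equation*}
(\bwd e_h^n, v_h) + a(e_h^n, v_h) + b(v_h, \epsilon_h^n) = \dual{R^n, v_h}, \qquad \forall v_h\in V_h,
\end{equation*}
together with $b(e_h^n,q_h)=0$ for all $q_h\in Q_h$. The residual $R^n$ decomposes into three natural pieces: (i) the projection time-truncation $-\bwd \eta^n$; (ii) the backward-Euler characteristic consistency $(Du/Dt)(t^n)-(u^n-u^{n-1}\circ X_1(u^{n-1}))/\Delta t$; and (iii) the ``locally-linearized-velocity'' perturbation $[u_h^{n-1}\circ X_1(u^{n-1})-u_h^{n-1}\circ X_1(\schemeint u_h^{n-1})]/\Delta t$ genuinely new to \zettai{}.

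For the velocity estimate I would test with $v_h=\bwd e_h^n$ and handle the composite term by the change of variables $y=X_1(\schemeint u_h^{n-1})(x)$, whose Jacobian obeys a bound of the form $|\det\nabla X_1(\schemeint u_h^{n-1})^{\pm 1}-1|\le c\,\Delta t\,\norm{\nabla \schemeint u_h^{n-1}}_{0,\infty}$ implicit in the hypothesis of Proposition \ref{exactcomp}. Term (i) is $O(h^2)$ by the Stokes-projection time-derivative estimate; (ii) is $O(\Delta t)$ by Taylor expansion of $u$ along the characteristic, using $u\in Z^2$; and (iii) is controlled by the mean-value bound $\norm{u_h^{n-1}\circ X_1(u^{n-1})-u_h^{n-1}\circ X_1(\schemeint u_h^{n-1})}_0\le \Delta t\,\norm{\nabla u_h^{n-1}}_{0,\infty}\,\norm{u^{n-1}-\schemeint u_h^{n-1}}_0$, which reduces (iii) to the previous-step velocity error plus an interpolation $O(h^2)$. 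Discrete Gronwall then delivers
\begin{equation*}
\norm{e_h}_{\ell^\infty(H^1)}^2 + \Delta t\sum_{n=1}^{N_T}\norm{\bwd e_h^n}_0^2 \le c(h^2+\Delta t)^2.
\end{equation*}
The pressure estimate follows from the discrete inf-sup inequality for the $\pk 2/\pk 1$ pair applied to the rewritten error equation $b(v_h,\epsilon_h^n)=\dual{R^n-\bwd e_h^n, v_h}-a(e_h^n,v_h)$, squared and summed in $n$.

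The main obstacle is that every step above silently presupposes the hypothesis $\aintone\,\Delta t\,\norm{\nabla \schemeint u_h^{n-1}}_{0,\infty}<1$ of Proposition \ref{exactcomp}, needed both to apply the proposition and to keep the Jacobian bounded away from zero. I plan to close the loop by a simultaneous induction on $n$: assuming the claimed estimate through step $n-1$, the inverse inequality $\norm{\nabla v_h}_{0,\infty}\le c h^{-d/2}\norm{\nabla v_h}_0$ (valid under Hypothesis \ref{hypo:inverse}) combined with Stokes-projection and interpolation $W^{1,\infty}$ bounds yields
\begin{equation*}
\norm{\nabla \schemeint u_h^{n-1}}_{0,\infty} \le C + c h^{-d/2}(h^2+\Delta t),
\end{equation*}
with $C$ depending only on the regularity of $u$. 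The dominant dangerous contribution, after multiplying by $\Delta t$, is then $c\,\Delta t^2/h^{d/2}$, which stays uniformly small precisely under $\Delta t\le\czero h^{d/4}$ with $\czero$ chosen small relative to $1/\aintone$. This closes the induction and completes the argument.
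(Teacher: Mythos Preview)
Your overall architecture---Stokes-projection splitting, testing with $v_h=\bwd e_h^n$, and an induction on $n$ that simultaneously controls the error and the hypothesis $\aintone\,\Delta t\,|\schemeint u_h^{n-1}|_{1,\infty}<1$---matches the paper's proof. There are, however, two real gaps in the residual treatment.

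First, your three pieces do not add up to the actual residual. With $X_1^u\equiv X_1(u^{n-1})$ and $X_1^h\equiv X_1(\schemeint u_h^{n-1})$, the correct identity is $R^n=\sum_{i=1}^4 R_i^n$ with
\begin{align*}
R_1^n &= \frac{Du^n}{Dt}-\frac{u^n-u^{n-1}\circ X_1^u}{\Delta t}, &
R_2^n &= \frac{u^{n-1}\circ X_1^h-u^{n-1}\circ X_1^u}{\Delta t},\\
R_3^n &= \frac{\eta^n-\eta^{n-1}\circ X_1^h}{\Delta t}, &
R_4^n &= -\frac{e_h^{n-1}-e_h^{n-1}\circ X_1^h}{\Delta t}.
\end{align*}
Your (ii) is $R_1^n$, but (i) $=-\bwd\eta^n$ misses the transport of $\eta^{n-1}$ present in $R_3^n$, and your (iii) is not $R_2^n+R_4^n$; in particular the feedback term $R_4^n$, which drives the Gronwall recursion through $|e_h^{n-1}|_1$, is absent from your list. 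Second, and this is the step that would actually fail, your bound on (iii) places $u_h^{n-1}$ as the composed function, so the mean-value estimate produces the factor $|u_h^{n-1}|_{1,\infty}$. This quantity is \emph{not} uniformly bounded under $\Delta t\le c_0 h^{d/4}$: the induction only yields $\Delta t\,|\schemeint u_h^{n}|_{1,\infty}\le 1/4$, so $|u_h^{n-1}|_{1,\infty}$ can be of order $1/\Delta t$, and the resulting Gronwall constant blows up. The paper's four-term decomposition is designed precisely to avoid this: in $R_2^n$ the composed function is the \emph{smooth} $u^{n-1}$, contributing the bounded factor $|u^{n-1}|_{1,\infty}$; in $R_4^n$ one applies Lemma~\ref{lemm:twofunc} with $w_1=0$, $w_2=\schemeint u_h^{n-1}$, $\psi=e_h^{n-1}$ to obtain $\|R_4^n\|_0\le\sqrt2\,\|u_h^{n-1}\|_{0,\infty}\,|e_h^{n-1}|_1$, and $R_3^n$ is handled analogously. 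The essential point is that only $\|u_h^{n-1}\|_{0,\infty}$, not $|u_h^{n-1}|_{1,\infty}$, enters the constants; this $L^\infty$-bound \emph{is} uniformly controllable via $\|v_h\|_{0,\infty}\le ch^{-d/6}\|v_h\|_1$ and the $H^1$-error estimate, and it has to be carried as an additional induction hypothesis (the paper's P($n$)-(b): $\|u_h^n\|_{0,\infty}\le\|u\|_{C(L^\infty)}+1$), which your induction omits.
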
 
Next, we state the result when the $\mini$-element is employed.
\begin{hypoMini} 
The solution of (\ref{NS}) satisfies 
\begin{equation*}
	u\in 
	Z^2 \cap H^1(H^2), ~
	p\in H^1(H^1).
\end{equation*}
\end{hypoMini}
\begin{remark}\label{divu00_MINI}
Hypothesis 1$^\prime$ implies $(u,p) \in C(H^2 \times H^1)$, which yields $\nabla \cdot u^0 =0$.
\end{remark}
\begin{hypo}\label{hypo:ON}
The Stokes problem is regular, that is, for all $g\in L^2(\Omega)^d$ the solution $(w,r) \in H^1_0(\Omega)^d \times L^2_0(\Omega)$ of the Stokes problem, 
\begin{equation*}\label{eq:stokesStrong}
\begin{split}
	-\nu \Delta w + \nabla r =g, & \quad x \in \Omega,\\
	\nabla \cdot w = 0, & \quad x \in \Omega
\end{split}
\end{equation*}
belongs to $H^2(\Omega)^d\times H^1(\Omega)$ and the estimate
\begin{equation*}
	\mixn{(w,r)}_{H^2 \times H^1} \leq c \normsob{g}{0}{2}.
\end{equation*}
holds, where $c$ is a positive constant independent of $g, w$ and $r$.
%}
\end{hypo}
\begin{remark}
Hypothesis \ref{hypo:ON} holds, for example, if $d=2$ and $\Omega$ is convex \cite{GiraultRaviart}.
\end{remark}
\begin{theorem}\label{theo:mini}
Let 
$V_h\times Q_h$ be the $\mini$-finite element space.
Suppose Hypotheses \hypoMiniNum and \ref{hypo:inverse}.
Then, there exist positive constants 
$\czero$ and $\htheo$ 
such that if $h\in (0,\htheo]$ and $\Delta t \leq \czero h^{d/4}$, 
the solution $(u_h,p_h) \equiv \set{(u_h^n,p_h^n)}_{n=0}^{N_T}$ of {\zettai} exists, 
and the estimates
\begin{equation}\label{eq:miniEst1}
	\mynorm{u_h-u}_{\ell^\infty(H^1)}, \mynorm{p_h-p}_{\ell^2(L^2)} \leq \ctheotwo(h+\Delta t)
\end{equation}
hold, where $\ctheotwo$ is a positive constant independent of $h$ and $\Delta t$.
Moreover, under Hypothesis \ref{hypo:ON}, 
the estimate 
\begin{equation}\label{eq:miniEst2}
	\mynorm{u_h-u}_{\ell^\infty(L^2)} \leq \ctheothree(h^2+\Delta t)
\end{equation}
holds, where $\ctheothree$ is a positive constant independent of $h$ and $\Delta t$.
\end{theorem}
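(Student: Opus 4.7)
The plan is to adapt the argument of Theorem~\ref{theo:mainConvergence} to the $\mini$-element with the appropriate reduction in Stokes-projection orders, and to append an Aubin--Nitsche duality argument for the improved $L^2$ bound \eqref{eq:miniEst2} using Hypothesis~\ref{hypo:ON}. Decompose through the Stokes projection \eqref{eq:stokesproj} as $u_h^n - u^n = e_h^n + \eta^n$ and $p_h^n - p^n = \epsilon_h^n + \zeta^n$, with $e_h^n \equiv u_h^n - \pro{u}^n$ and $\eta^n \equiv \pro{u}^n - u^n$. Standard $\mini$-element Stokes-projection estimates give $\normsob{\eta^n}{1}{2} + \normsob{\zeta^n}{0}{2} \le c h$ under Hypothesis~\hypoMiniNum, and $\normsob{\eta^n}{0}{2} \le c h^2$ under the additional Hypothesis~\ref{hypo:ON}.

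Derive the error equation by subtracting \eqref{eq:NSweak} at $t = t^n$ from \eqref{eq:scheme2}, rewriting the material derivative via the characteristic identity so that its backward-Euler analogue naturally involves $u^{n-1}\circ X_1(u^{n-1})$. Testing with $v_h = e_h^n$ and $q_h = \epsilon_h^n$, and using the change-of-variables bound $\normsob{v_h \circ X_1(w)}{0}{2}^2 \le (1 + c \Delta t \snormsobi{w}{1}{\infty})\normsob{v_h}{0}{2}^2$ valid once $\aintone \Delta t \snormsobi{w}{1}{\infty} < 1$, one obtains an energy inequality of the form
\begin{equation*}
\tfrac{1}{2}\bwd \normsob{e_h^n}{0}{2}^2 + \nu \snormsob{e_h^n}{1}{2}^2 \le R_1^n + R_2^n + R_3^n + c \normsob{e_h^n}{0}{2}^2,
\end{equation*}
where $R_1^n = O(\Delta t)$ is the time-truncation contribution (using $u \in Z^2$), $R_2^n = O(h)$ comes from the Stokes projection, and $R_3^n$ is the perturbation from using $\schemeint u_h^{n-1}$ in place of $u_h^{n-1}$ in the argument of $X_1$, controlled by a splitting of $u_h^{n-1} - \schemeint u_h^{n-1}$ through $u^{n-1}$ and its Lagrange interpolate $\lagk 1 u^{n-1}$, each piece entering with a factor of $\Delta t$. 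The $W^{1,\infty}$ control needed on the convecting field is obtained by induction: assuming $\mynorm{e_h^k}_{\lil} \le \cprop(h + \Delta t)$ for $k \le n-1$, the inverse assumption and stability of $\lagk 1$ give $\snormsobi{\schemeint u_h^{n-1}}{1}{\infty} \le \snormsobi{u^{n-1}}{1}{\infty} + c h^{-d/2}(\normsob{e_h^{n-1}}{1}{2} + \normsob{\eta^{n-1}}{1}{2})$, and the coupling $\Delta t \le \czero h^{d/4}$ both verifies the hypothesis of Proposition~\ref{exactcomp} and closes the induction. Discrete Gronwall combined with the $\mini$-element inf--sup condition (to recover $\epsilon_h$) then yields \eqref{eq:miniEst1}; existence of $(u_h^n, p_h^n)$ at each step follows from the same inf--sup condition applied to the linear system of \zettai{} at step $n$.

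For the improved bound \eqref{eq:miniEst2}, fix $n$ and let $(\phi, \pi) \in H^2(\Omega)^d \times H^1(\Omega)$ solve the dual Stokes problem with right-hand side $e_h^n$; Hypothesis~\ref{hypo:ON} gives $\normsob{\phi}{2}{2} + \normsob{\pi}{1}{2} \le c \normsob{e_h^n}{0}{2}$. Testing the error equation against $(\phi, \pi)$, replacing it by its Stokes projection (the difference being orthogonal to the discrete test space by \eqref{eq:stokesproj}), and exploiting the extra factor of $h$ gained from the $H^2 \times H^1$ regularity in the Stokes-projection error appearing in the diffusion, pressure, and composite-function terms, one absorbs $\normsob{e_h^n}{0}{2}$ on both sides and applies Gronwall to obtain $\mynorm{e_h}_{\lil} \le c (h^2 + \Delta t)$; combined with $\normsob{\eta^n}{0}{2} \le c h^2$ this yields \eqref{eq:miniEst2}. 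The main technical obstacle I expect is the control of $R_3^n$ and its dual analogue: the difference between the composite-function terms evaluated at $\schemeint u_h^{n-1}$ and at $u_h^{n-1}$ must be decomposed so that any discrete factor bounded only through the inverse estimate carries a compensating $\Delta t$, which is the mechanism forcing $\Delta t \le \czero h^{d/4}$; in the duality argument one must further arrange that the dual solution, rather than the discrete unknown, sits in any position that would otherwise require an $L^\infty$ bound.
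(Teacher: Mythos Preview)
Your plan has two genuine gaps. For \eqref{eq:miniEst1} you test the error equation with $v_h=e_h^n$, obtaining $\tfrac12\bwd\normsob{e_h^n}{0}{2}^2+\nu\snormsob{e_h^n}{1}{2}^2\le\cdots$; but summing this and applying Gronwall yields only $\ell^\infty(L^2)\cap\ell^2(H^1)$ control of $e_h$, not the $\ell^\infty(H^1)$ bound that \eqref{eq:miniEst1} asserts. The paper (mirroring the proof of Theorem~\ref{theo:mainConvergence}) instead tests with $v_h=\bwd e_h^n$, which puts $\bwd(\nu\snormsob{e_h^n}{1}{2}^2)$ on the left and gives the $\ell^\infty(H^1)$ estimate directly; it also produces the $\ell^2(L^2)$ bound on $\bwd e_h$ that is then used, together with the inf--sup condition, to recover the pressure.

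Second, your Aubin--Nitsche step for \eqref{eq:miniEst2} breaks at the time-derivative term. If $(\phi,\pi)$ solves the stationary dual Stokes problem with right-hand side $e_h^n$ and you test the error equation with $\pro\phi$, the term $(\bwd e_h^n,\pro\phi)$ neither telescopes (since $\phi$ depends on $n$) nor can be bounded better than $\normsob{\bwd e_h^n}{0}{2}\normsob{\pro\phi}{0}{2}\le c\normsob{\bwd e_h^n}{0}{2}\normsob{e_h^n}{0}{2}$, which from part one is only $O(h+\Delta t)\normsob{e_h^n}{0}{2}$ and cannot close an $O(h^2)$ estimate. The paper does \emph{not} invoke an explicit dual problem: Hypothesis~\ref{hypo:ON} enters solely through the improved Stokes-projection bound $\normsob{\eta}{0}{2}\le \aon h^2$ of Lemma~\ref{lemm:stokespro}-(ii). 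One then tests with $v_h=e_h^n$ (your energy identity, now used for its proper purpose) and estimates the composite-function residuals in $H^{-1}\times H^1$ duality via Lemma~\ref{lemm:DouglasRussell}, e.g.\ $\Delta t^{-1}(\eta^{n-1}-\eta^{n-1}\circ X_1(\lagk1 u^{n-1}),e_h^n)\le c\normsob{\eta^{n-1}}{0}{2}\normsob{e_h^n}{1}{2}\le c h^2\normsob{e_h^n}{1}{2}$, and analogously for the $e_h^{n-1}$ terms; the $\normsob{e_h^n}{1}{2}$ factor is absorbed by the $\nu\snormsob{e_h^n}{1}{2}^2$ on the left (also using the already-proven \eqref{eq:miniEst1}), and Gronwall on $\normsob{e_h^n}{0}{2}^2$ gives \eqref{eq:miniEst2}.
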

\begin{remark}
The convergence proof is easily extended for any pairs satisfying the inf-sup condition.  
However, the convergence order with respect to the space discretization is bounded by $O(h^2)$ caused by the locally linearized approximation of the velocity.
In fact, in the case of the $\pk2/\pk1$-element the estimate (\ref{eq:estimeteScheme0ON}) with $k=2$ does not hold in \zettai, cf., Example \ref{exam:funcC} in Section \ref{sec:numex}.  
\end{remark}
\section{Proofs of the main theorems}\label{sec:proof}
We prove Theorem \ref{theo:mainConvergence} in Subsections \ref{subsec:p2p1proofbegin}--\ref{subsec:p2p1proofend} and Theorem \ref{theo:mini} in Subsection \ref{subsec:miniproof}.
\subsection{Some lemmas}
We recall some results used in proving the main theorems.
For proofs of Lemmas \ref{lemm:poin}--\ref{lemm:stokespro} we refer to the cited bibliography.
\begin{lemma}[Poincar\'e's inequality \cite{Ciarlet}]\label{lemm:poin}
	There exists a positive constant $\apoin(\Omega)$ such that
	\begin{equation*}
		\mynorm{v}_{0} \leq \apoin \abs{v}_{1}, \quad \forall v\in H^1_0(\Omega)^d.
	\end{equation*}
\end{lemma}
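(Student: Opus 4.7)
The plan is to prove this classical inequality in three steps: reduce to compactly supported smooth test functions, establish the estimate by a one-dimensional integration argument, and extend by density and to the vector-valued setting.

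First, since $\Omega \subset \R^d$ is bounded, I would choose coordinates so that $\Omega \subset (a,b) \times \R^{d-1}$ for some finite interval $(a,b)$. For a scalar $\phi \in C_c^\infty(\Omega)$, extend it by zero to the enclosing slab. The fundamental theorem of calculus gives
\[
\phi(x_1,x') = \int_a^{x_1} \henbi{\phi}{x_1}(s,x') \, ds,
\]
and the Cauchy--Schwarz inequality yields
\[
\abs{\phi(x)}^2 \le (b-a) \int_a^b \biggl|\henbi{\phi}{x_1}(s,x')\biggr|^2 ds.
\]
Integrating both sides over $\Omega$ produces $\mynorm{\phi}_{0} \le (b-a)\, \mynorm{\partial_1 \phi}_{0}$.

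Second, I would extend the scalar inequality to every $\phi \in H^1_0(\Omega)$ by the density of $C_c^\infty(\Omega)$ in this space (which is the defining property of $H^1_0$), passing to the limit in both sides of the estimate. Third, the vector-valued version follows componentwise: for $v = (v_1,\dots,v_d) \in H^1_0(\Omega)^d$,
\[
\mynorm{v}_{0}^2 = \sum_{i=1}^d \mynorm{v_i}_{0}^2 \le (b-a)^2 \sum_{i=1}^d \mynorm{\partial_1 v_i}_{0}^2 \le (b-a)^2 \abs{v}_{1}^2,
\]
so one can take $\apoin = b-a$, which depends only on $\Omega$.

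There is essentially no analytical obstacle here; the argument is purely elementary once the enclosing slab is chosen. The only bookkeeping subtlety is verifying that the same constant works for the vector-valued norm without a $d$-dependent blow-up, which is immediate because $\abs{v}_{1}^2$ already bundles all partial derivatives $\partial_j v_i$ and the estimate only uses the single direction $\partial_1 v_i$. A more sophisticated alternative would invoke Rellich compactness together with a contradiction argument, but on a bounded domain the direct integration route is the cleanest and yields an explicit value of $\apoin$ bounded by the diameter of $\Omega$.
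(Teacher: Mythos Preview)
Your proof is correct and is the standard textbook argument for Poincar\'e's inequality on a bounded domain. The paper itself does not supply a proof of this lemma: it explicitly states that for Lemmas~\ref{lemm:poin}--\ref{lemm:stokespro} the reader is referred to the cited bibliography, in this case Ciarlet~\cite{Ciarlet}. Your integration-in-one-direction argument is precisely the kind of elementary derivation one finds in such references, so there is nothing to compare beyond noting that you have filled in what the paper deliberately left as a citation.
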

\begin{lemma}[the Lagrange interpolation \cite{Ciarlet}] 
	\label{lemm:intest}
	Suppose $\triseq$ is a regular family of triangulations of $\bar \Omega$.
	Let $X_h$ be the $\pk2$- or $\pk1\mathrm{+}$-finite element space and
	$\lagk{1}$ be the Lagrange interpolation operator to the $\pk 1$-finite element space. 
	Then, it holds that 
	\begin{equation*}
		\normi{\Pi_h^{(1)} v}_{0,\infty} \leq \mynorm{v}_{0,\infty}, \quad \forall v\in C(\bar \Omega)^d,
	\end{equation*}
	and there exist positive 
	constants 
	$\aintone \geq 1$, 
	$\ainteh$
	and $\aintfour$ such that
\begin{subequations}
\begin{align}
&& \absi{\Pi_h^{(1)} v}_{1,\infty} &\leq \aintone \abs{v}_{1,\infty} ,           &\forall v\in W^{1,\infty}(\Omega)^d, \label{eq:aintone}\\
&& \normsobi{\lagk{1} v-v}{s}{2} &\leq \ainteh h^{2-s} \snormsobi{v}{2}{2}, &s=0,1, ~ \forall v\in H^{2}(\Omega)^d,\\
&& \normi{\Pi_h^{(1)}v_h}_0       &\leq \aintfour \mynorm{v_h}_0,            &\forall v_h \in X_h. \label{eq:stabfinitedim}
\end{align}
\end{subequations}
\end{lemma}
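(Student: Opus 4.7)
The plan is to reduce each of the four inequalities to a statement on a fixed reference simplex $\hat K$ via the affine maps $F_K\colon \hat K \to K$ admitted by the shape-regularity of $\triseq$, and then to transport back using the standard scaling relations. All four estimates are classical and appear in Ciarlet's book, so the task is mainly to spell out the ingredients on the reference element and to track the correct powers of $h_K$.

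The first estimate $\normi{\Pi_h^{(1)} v}_{0,\infty} \le \mynorm{v}_{0,\infty}$ is immediate: on each $K$ one writes $(\Pi_h^{(1)} v)(x) = \sum_{i=0}^{d} \lambda_i(x)\, v(A_i)$ in barycentric coordinates, with $\lambda_i(x) \ge 0$ and $\sum_i \lambda_i(x) = 1$, so the pointwise Euclidean-norm bound follows by convexity, with constant $1$. For the $W^{1,\infty}$-stability \eqref{eq:aintone} and the interpolation-error estimates, I would work componentwise (the seminorm $|\cdot|_{1,\infty}$ of the preliminaries is equivalent, up to a dimensional factor, to the maximum over components of the scalar $W^{1,\infty}$-seminorm). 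On $\hat K$ one uses that $\hat \Pi$ reproduces affine polynomials, together with the Bramble--Hilbert lemma and norm equivalence on the finite-dimensional space $\pk 1(\hat K)$, to obtain
\[
|\hat \Pi \hat v|_{W^{1,\infty}(\hat K)} \le C\, |\hat v|_{W^{1,\infty}(\hat K)}, \qquad \normi{\hat v - \hat \Pi \hat v}_{s,2,\hat K} \le C\, |\hat v|_{2,2,\hat K}, \quad s=0,1.
\]
The pullback $\hat v = v \circ F_K$ satisfies $|\phi|_{m,p,K} \sim h_K^{m - d/p}\, |\hat \phi|_{m,p,\hat K}$ with constants depending only on the shape-regularity constant; summing over $K$ yields the global bounds, with the characteristic factor $h^{2-s}$ in the error estimate arising from the mismatch between $m=2$ on the right and $m=s$ on the left. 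The lower bound $\aintone \ge 1$ is forced because $\Pi_h^{(1)}$ is the identity on $\pk 1$, so no constant smaller than $1$ is possible.

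The main obstacle is the cross-space estimate \eqref{eq:stabfinitedim}, which bounds $\Pi_h^{(1)}$ applied to a function of \emph{higher} polynomial degree. The key observation is that $\pk 2$- and $\pk 1\mathrm{+}$-functions are continuous at the vertices of $K$, so the restriction of $\hat \Pi$ to $\hat X_h|_{\hat K}$ is a well-defined linear operator into $\pk 1(\hat K)$; on this finite-dimensional reference space, all norms are equivalent, giving $\normi{\hat \Pi \hat v_h}_{0,\hat K} \le C\, \normi{\hat v_h}_{0,\hat K}$ with $C$ depending only on the choice of $X_h$ and on $\hat K$. Since $\normi{\phi}_{0,K}^2 = |\det DF_K|\, \normi{\hat \phi}_{0,\hat K}^2$, the Jacobian cancels on both sides, and squaring and summing over $K$ gives \eqref{eq:stabfinitedim}. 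This estimate is delicate in that it would fail if $X_h$ were replaced by a space whose elements are not continuous at the mesh vertices, so the argument must identify continuity at vertices as the only property of $X_h$ that is used.
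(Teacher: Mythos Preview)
Your proposal is correct and follows the classical reference-element argument. The paper does not actually prove this lemma: it states at the beginning of the subsection that ``For proofs of Lemmas \ref{lemm:poin}--\ref{lemm:stokespro} we refer to the cited bibliography,'' citing Ciarlet, and the only further comment is the remark after the statement that \eqref{eq:stabfinitedim} ``holds since $X_h$ is finite-dimensional.'' Your handling of \eqref{eq:stabfinitedim} via norm equivalence on the reference element is precisely the finite-dimensionality argument the paper alludes to, and the rest of your sketch is the standard Ciarlet material the paper points to.
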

\begin{remark}
	The inequality (\ref{eq:stabfinitedim}) holds since $X_h$ is finite-dimensional.
If we replace $\lagk 1$ by the Cl\'ement interpolation operator \cite{Clement}, 	this inequality holds for all $v\in L^2(\Omega)^d$.  
\end{remark}
\begin{lemma}[the inverse inequality \cite{Ciarlet,Suli}]
	\label{lemm:inv}
	Suppose $\triseq$ satisfies the inverse assumption. %($\set{\mathcal T_h}_h$ is quasi-uniform). 
	Let $X_h $ be the $\pk2$- or $\pk1\mathrm{+}$-finite element space.
	Then, there exist positive constants $\ainvzero$ and $\ainvone$ such that
	\begin{equation*}
		\begin{split}
			\mynorm{v_h}_{0,\infty} \leq \ainvzero h^{-d/6} \mynorm{v_h}_1, \quad &\forall v_h \in X_h, \\
			\abs{v_h}_{1,\infty} \leq \ainvone h^{-d/2} \abs{v_h}_1, \quad &\forall v_h \in X_h.
		\end{split}
	\end{equation*}
\end{lemma}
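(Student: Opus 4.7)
The plan is to reduce both inequalities to the reference element by the standard affine scaling argument, exploit the equivalence of all norms on the finite-dimensional space of local shape functions, and consolidate the element-wise estimates using the inverse (quasi-uniformity) assumption, which gives $h_K \sim h$ uniformly in $K \in \mathcal T_h$.

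For the first inequality, I would first establish the local inverse bound $\normi{v_h}_{0,\infty,K} \leq c\, h_K^{-d/6} \normi{v_h}_{0,6,K}$ on every $K \in \mathcal T_h$ by pulling $v_h$ back to the reference element $\hat K$ via the affine map $F_K$, using norm equivalence $\normi{\hat v_h}_{0,\infty,\hat K} \leq c\, \normi{\hat v_h}_{0,6,\hat K}$ on the finite-dimensional shape-function space, and tracking the Jacobian $|\det DF_K| \sim h_K^d$ produced by the change of variables in the $L^6$-norm. Taking the maximum over $K$, observing $\max_K \normi{v_h}_{0,6,K} \leq \normi{v_h}_{0,6,\Omega}$, and invoking quasi-uniformity replaces $h_K$ by $h$ to yield $\normi{v_h}_{0,\infty} \leq c\, h^{-d/6} \normi{v_h}_{0,6,\Omega}$. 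Finally, the Sobolev embedding $H^1(\Omega) \hookrightarrow L^6(\Omega)$, valid for $d \leq 3$, converts the right-hand side into $c\, h^{-d/6} \normi{v_h}_1$, producing the constant $\ainvzero$.

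For the second inequality, the argument is the classical Markov-type inverse inequality applied component-wise. On the reference element, norm equivalence on the shape-function space gives $\snormsobi{\hat v_h}{1}{\infty} \leq c\, \snormsobi{\hat v_h}{1}{2}$. Mapping back to $K$ via $F_K$, the chain rule contributes $\|DF_K^{-1}\| \sim h_K^{-1}$ in both semi-norms identically, while the change of variables in the $L^2$-semi-norm introduces the Jacobian factor $|\det DF_K|^{1/2} \sim h_K^{d/2}$. The net scaling is $\snormsobi{v_h}{1}{\infty,K} \leq c\, h_K^{-d/2} \snormsobi{v_h}{1}{2,K}$, and after taking the maximum over $K$ and using quasi-uniformity we obtain $\snormsobi{v_h}{1}{\infty} \leq \ainvone h^{-d/2} \snormsobi{v_h}{1}{2}$. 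The main point requiring care is the bookkeeping of scaling exponents in (i): the exponent $-d/6$ is sharper than the crude $-d/2$ that would come from combining Poincar\'e with a direct $L^2$-to-$L^\infty$ inversion, and it is exactly this improvement that requires the intermediate choice $p=6$ and the Sobolev embedding; everything else is routine.
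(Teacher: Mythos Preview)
The paper does not supply its own proof of this lemma: it is stated with citations to Ciarlet and S\"uli, and the text explicitly notes that for Lemmas~\ref{lemm:poin}--\ref{lemm:stokespro} the proofs are deferred to the cited bibliography. Your argument is the standard one found in those references and is correct; in particular, the key point you flag---routing the first estimate through $L^6$ via the local inverse inequality $\normi{v_h}_{0,\infty,K}\le c\,h_K^{-d/6}\normi{v_h}_{0,6,K}$ and then invoking the Sobolev embedding $H^1(\Omega)\hookrightarrow L^6(\Omega)$ for $d\le 3$---is exactly what produces the sharper exponent $-d/6$ used later in the paper (e.g.\ in verifying property P($n$)-(b)).
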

\begin{lemma}[the inf-sup condition \cite{MINI,BercovierPironneau,Verfurth1984}]
	\label{lemm:infsup}
	Suppose Hypothesis \ref{hypo:inverse}. 
	Let $V_h \times Q_h \subset H_0^1(\Omega)^d \times L_0^2(\Omega)$ be the $\pk2/\pk1$- or $\mini$-finite element space.
	Then, there exists a positive constant $\beinfsup$ independent of $h$ such that 
	\begin{equation*}
		\inf_{q_h \in Q_h\setminus\set{0}} \sup_{v_h \in V_h\setminus\set{0}} \frac{b(v_h, q_h)}{\mynorm{v_h}_{1} \norm{q_h}_{0}} \geq \beinfsup.
	\end{equation*}
\end{lemma}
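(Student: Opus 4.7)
The plan is to apply Fortin's criterion: once we construct, for each of the two element pairs, an operator $\Pi_h^F : H_0^1(\Omega)^d \to V_h$ satisfying
\begin{equation*}
b(v - \Pi_h^F v, q_h) = 0, \quad \forall q_h \in Q_h, \qquad \mynorm{\Pi_h^F v}_1 \leq C \mynorm{v}_1,
\end{equation*}
with $C$ independent of $h$, the discrete inf-sup constant follows as $\beinfsup = C^{-1}\beone$, where $\beone$ is the continuous inf-sup constant of the divergence operator on $H_0^1(\Omega)^d \times L^2_0(\Omega)$ (a consequence of Ne\v{c}as' lemma). So the entire task reduces to exhibiting a Fortin operator.

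For the $\mini$-element I would build $\Pi_h^F$ in two stages. First, apply a Clément- or Scott--Zhang-type quasi-interpolant $R_h$ into the $\pk1$-part of $V_h$, which has the standard local $H^1$-stability and $L^2$-approximation estimates. Second, add an elementwise bubble correction $B_h v$: on each $K \in \mathcal T_h$ the bubble $b_K$ spans a one-dimensional subspace in each coordinate, and its coefficients can be chosen so that $\int_K (R_h v + B_h v - v) \cdot \nabla q_h \, dx = 0$ for every $q_h \in Q_h$. Because $\nabla q_h$ is piecewise constant on $K$ and $\int_K b_K\,dx > 0$, this is an invertible diagonal system on each element, and the stability $\mynorm{B_h v}_1 \leq C \mynorm{v}_1$ follows from scaling on $b_K$ combined with the $L^2$-estimate of $v - R_h v$ on patches. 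Summing over $K$ converts the pointwise condition into the required $b$-orthogonality, since $q_h|_K \in \pk1$ gives $\int_K (\Pi_h^F v - v)\cdot \nabla q_h\,dx = \int_K \nabla\cdot(\Pi_h^F v - v)\, q_h\,dx$ after integration by parts on an element with vanishing boundary contributions from neighbouring elements.

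For the $\pk2/\pk1$-element, I would use Verf\"urth's trick. Take again a $\pk1$ quasi-interpolant as the first component of $\Pi_h^F$, and use the degrees of freedom at edge (for $d=2$) or face (for $d=3$) midpoints of $V_h$ as the second component to adjust the normal flux across each interior edge: on every such edge $e$ one fixes the quadratic edge bubble in $V_h$ and chooses its coefficient so that $\int_e (\Pi_h^F v - v)\cdot n_e\, \lambda\,ds = 0$ for the two linear barycentric coordinates $\lambda$ on $e$. Integration by parts then reduces $b(\Pi_h^F v - v, q_h)$ to a sum of such edge integrals and vanishes. Hypothesis \ref{hypo:inverse} (each $K$ having at least one vertex in $\Omega$) enters here: it guarantees that every element possesses enough interior edges for the correction to be performed without being obstructed by the Dirichlet boundary.

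The main obstacle is the stability estimate for the Hood--Taylor correction. For the $\mini$-element the bubble correction is strictly local, whereas the edge correction couples neighbouring elements, so that $\mynorm{\Pi_h^F v}_1 \leq C \mynorm{v}_1$ must be obtained by summing local trace inequalities on patches and combining them with the $L^2$-approximation of $v - R_h v$. This is the classical Verf\"urth argument carried out in \cite{MINI,BercovierPironneau,Verfurth1984}, and in the present paper the proof is simply delegated to those references rather than reproduced.
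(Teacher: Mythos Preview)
The paper does not supply a proof of this lemma; it states at the beginning of the section that Lemmas~\ref{lemm:poin}--\ref{lemm:stokespro} are quoted from the cited literature, and for the inf-sup condition it simply points to \cite{MINI,BercovierPironneau,Verfurth1984}. Your Fortin-criterion sketch is precisely the argument that underlies those references, and you correctly acknowledge in your final paragraph that the paper delegates the proof rather than reproducing it.

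A couple of minor remarks on the sketch itself. For the $\mini$-element your construction is the standard one; the integration-by-parts identity you invoke is cleaner if performed once over all of $\Omega$ (using $v,\Pi_h^F v\in H_0^1(\Omega)^d$) rather than element by element, since then the condition reduces directly to $\int_K(\Pi_h^F v-v)\,dx=0$ on each $K$, which the bubble handles. For the Hood--Taylor pair your description of the edge correction is slightly loose: a single quadratic edge bubble per component gives one scalar degree of freedom per edge, so one cannot impose two moment conditions on each edge; the actual Verf\"urth/Bercovier--Pironneau arguments proceed somewhat differently (macro-element or weak inf-sup techniques). Your identification of the vertex condition in Hypothesis~\ref{hypo:inverse} as the structural ingredient needed for $\pk2/\pk1$ stability is correct. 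Finally, avoid reusing the symbol $\beone$ for the continuous inf-sup constant, since in this paper it already denotes $\beta_{21}^*$.
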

\begin{lemma}[\cite{GiraultRaviart}]
	\label{lemm:stokespro}
	(i) Suppose Hypothesis \ref{hypo:inverse}  
	and that $V_h \times Q_h \subset H_0^1(\Omega)^d \times L_0^2(\Omega)$ is the $\pk2/\pk1$- or $\mini$-finite element space.
	Let $(\pro{w},\pro{r})$ be the Stokes projection of $(w,r)$ defined in (\ref{eq:stokesproj}). 
	Then, there exists a positive constant $\astokes$ independent of $h$ such that
	\begin{equation*}
		\normsob{\pro{w}-w}{1}{2}, \normsob{\pro{r}-r}{0}{2} 
		\leq 
		\astokes h^k \mixn{(w,r)}_{H^{k+1}\times H^{k}},
	\end{equation*}
	where $k=2$ for the $\pk2/\pk1$-element and $k=1$ for the $\mini$-element. \\
		(ii) Moreover, suppose Hypothesis \ref{hypo:ON}.
		Then, there exists a positive constant $\aon$ such that
		\begin{equation*}
			\normsob{\pro{w}-w}{0}{2} \leq 
			\aon h^{k+1} \mixn{(w,r)}_{H^{k+1}\times H^{k}},
		\end{equation*}
		where $k=2$ for the $\pk2/\pk1$-element and $k=1$ for the $\mini$-element.
\end{lemma}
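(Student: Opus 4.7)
The plan is to treat the two parts in sequence, using standard mixed finite element theory for (i) and an Aubin--Nitsche duality argument for (ii). Set $e_u \equiv \pro w - w$ and $e_p \equiv \pro r - r$. Rearranging the Stokes projection equations \eqref{eq:stokesproj} immediately produces the Galerkin orthogonality relations $a(e_u, v_h) + b(v_h, e_p) = 0$ for all $v_h \in V_h$ and $b(e_u, q_h) = 0$ for all $q_h \in Q_h$, which will drive both estimates.

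For part (i), I would invoke the standard Brezzi-type abstract theorem: because Lemma \ref{lemm:infsup} supplies the discrete inf-sup condition uniformly in $h$, and the bilinear form $a$ is coercive on $H^1_0(\Omega)^d$, one derives a C\'ea-type bound
\[
\normsob{e_u}{1}{2} + \normsob{e_p}{0}{2} \le c\Bigl(\inf_{v_h \in V_h}\normsob{w - v_h}{1}{2} + \inf_{q_h \in Q_h}\normsob{r - q_h}{0}{2}\Bigr).
\]
It then remains to insert the standard interpolation estimates for the $\pk 2/\pk 1$ and $\mini$ pairs, which give $h^k \normsob{w}{k+1}{2}$ and $h^k \normsob{r}{k}{2}$ with $k = 2$ and $k = 1$ respectively, and to combine them into the stated form $\astokes h^k \mixn{(w,r)}_{H^{k+1}\times H^k}$.

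For part (ii), the natural route is the Aubin--Nitsche duality trick. Consider the auxiliary Stokes problem: find $(\phi, \xi) \in H^1_0(\Omega)^d \times L^2_0(\Omega)$ with $-\nu \Delta \phi + \nabla \xi = e_u$ and $\nabla \cdot \phi = 0$ in $\Omega$. By Hypothesis \ref{hypo:ON}, $(\phi, \xi) \in H^2(\Omega)^d \times H^1(\Omega)$ with $\mixn{(\phi,\xi)}_{H^2 \times H^1} \le c \normsob{e_u}{0}{2}$. Testing the strong equation against $e_u$ and integrating by parts gives $\normsob{e_u}{0}{2}^2 = a(\phi, e_u) + b(e_u, \xi)$. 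Invoking the orthogonality relations against any $(\phi_h, \xi_h) \in V_h \times Q_h$ and exploiting $b(\phi, e_p) = 0$ (from $\nabla \cdot \phi = 0$), this rewrites as
\[
\normsob{e_u}{0}{2}^2 = a(e_u, \phi - \phi_h) + b(e_u, \xi - \xi_h) + b(\phi - \phi_h, e_p).
\]
Choosing $(\phi_h, \xi_h)$ to be the canonical interpolants bounds the right-hand side by $c\, h\, (\normsob{\phi}{2}{2} + \normsob{\xi}{1}{2})(\normsob{e_u}{1}{2} + \normsob{e_p}{0}{2})$, and feeding in the $H^1 \times L^2$ estimate from part (i) then closes the argument and gains the extra power of $h$, yielding $h^{k+1}$.

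The main technical obstacle lies in the mixed-problem C\'ea lemma behind (i): one must first obtain a near-optimal bound on the discrete divergence-free subspace of $V_h$ (where the coercivity of $a$ applies directly) and then pass to the full best approximation in $V_h \times Q_h$, a passage that uses the discrete inf-sup constant $\beinfsup$ of Lemma \ref{lemm:infsup} in an essential way. Once that abstract machinery is in place, the remaining work reduces to interpolation theory on the two finite element pairs and, for part (ii), to the elliptic regularity supplied by Hypothesis \ref{hypo:ON}.
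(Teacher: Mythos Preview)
The paper does not supply its own proof of this lemma; it is stated with a citation to Girault--Raviart and the surrounding text says explicitly that proofs of Lemmas~\ref{lemm:poin}--\ref{lemm:stokespro} are deferred to the cited bibliography. Your outline---the Brezzi/C\'ea best-approximation argument driven by the uniform inf-sup condition for part~(i), followed by an Aubin--Nitsche duality argument for part~(ii)---is exactly the standard route taken in that reference, and the steps you describe are correct.
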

\begin{lemma} 
	\label{lemm:jacobiest}
(i)	Let $w\in W^{1,\infty}_0(\Omega)^d$ and $X_1(w)$ be the mapping defined in (\ref{eq:x1def}).
	Then, under the condition $\Delta t \snormsobi{w}{1}{\infty}<1$, $X_1(w):\Omega \to \Omega$ is bijective.
\\
(ii) Furthermore, under the condition $\Delta t\abs{w}_{1,\infty} \leq \done$,
		the estimate 
		\begin{equation*}
			\frac12 \leq \jac{X_1(w)}{x} \leq \frac32
		\end{equation*}
		holds, where $\det (\partial X_1(w)/\partial x)$ is the Jacobian.
\end{lemma}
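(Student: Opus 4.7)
The plan is to treat (i) and (ii) by direct manipulation of the explicit formula $X_1(w)(x)=x-\Delta t\,w(x)$, viewing $X_1(w)$ as a small Lipschitz perturbation of the identity.

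For (i), I would first extend $w$ by zero to all of $\R^d$. Since $w\in W^{1,\infty}_0(\Omega)^d$ vanishes on $\partial\Omega$, the extension remains Lipschitz with the same semi-norm, so $X_1(w)$ is well defined on $\R^d$ and reduces to the identity on $\R^d\setminus\Omega$. Injectivity follows from the mean-value bound
\[
  |x-y| \;=\; \Delta t\,|w(x)-w(y)| \;\le\; \Delta t\,\snormsobi{w}{1}{\infty}\,|x-y|,
\]
which, under $\Delta t\,\snormsobi{w}{1}{\infty}<1$, forces $x=y$. For surjectivity, given $y\in\R^d$ I would apply Banach's fixed-point theorem to the contraction $T_y(x):=y+\Delta t\,w(x)$ on the complete space $\R^d$, obtaining a unique preimage of $y$ under $X_1(w)$. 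Combining bijectivity on $\R^d$ with the fact that $X_1(w)$ is the identity on $\R^d\setminus\Omega$ yields $X_1(w)(\Omega)=\Omega$: points of $\Omega$ cannot map into $\R^d\setminus\Omega$ by injectivity, and a preimage of any $y\in\Omega$ cannot lie outside $\Omega$ for the same reason.

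For (ii), the Jacobian matrix is $\partial X_1(w)/\partial x = I-\Delta t\,\nabla w(x)$. Setting $A:=-\Delta t\,\nabla w(x)$, the hypothesis $\Delta t\,\abs{w}_{1,\infty}\le \done$ gives a pointwise Frobenius bound $\|A\|_F\le 1/4$. I would then expand
\[
  \det(I+A) \;=\; 1 + \sum_{k=1}^{d} e_k(A),
\]
where $e_k(A)$ is the $k$-th elementary symmetric polynomial in the (possibly complex) eigenvalues of $A$, and bound each $|e_k(A)|$ by a power of $\|A\|_F$ using the general inequality $\sum_i|\lambda_i|^2\le\|A\|_F^2$ combined with Cauchy--Schwarz and AM--GM. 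Substituting $\|A\|_F\le 1/4$ then produces $|\det(I+A)-1|<1/2$ in both $d=2$ and $d=3$, yielding the claimed two-sided bound.

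The main obstacle is the arithmetic in part (ii): a crude operator-norm estimate $(1-\tfrac14)^d\le|\det(I+A)|\le(1+\tfrac14)^d$ already fails to lie in $[\tfrac12,\tfrac32]$ even for $d=2$, so one must genuinely exploit that $\sum_i|\lambda_i|^2$, not merely $\max_i|\lambda_i|$, is controlled by $\|A\|_F^2$. The value $\done=1/4$ turns out to be essentially calibrated so that, in three dimensions, the sum of the resulting bounds on $|e_1|$, $|e_2|$, $|e_3|$ sits just below $1/2$.
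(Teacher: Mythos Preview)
Your plan is correct and, for part~(ii), essentially parallel to the paper's argument: both expand $\det(I+A)-1$ as a sum of graded terms controlled by the Frobenius norm $\|A\|_F\le 1/4$, and both land on the same numerical margin $\sqrt{3}/4 + 1/16 + O(10^{-3}) < 1/2$ in three dimensions. The difference is packaging: the paper writes out $\det(I-A)-1$ entrywise for $d=3$, bounds the trace by $\sqrt{3}\,|a|$ via Cauchy--Schwarz, the sum of $2\times 2$ principal minors by $|a|^2$, and $|\det A|$ by the product of column norms together with AM--GM; you instead pass through the eigenvalues via Schur's inequality $\sum_i|\lambda_i|^2\le\|A\|_F^2$ and bound $e_1,e_2,e_3$ from there. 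Your route is slightly more conceptual and dimension-uniform, while the paper's direct expansion avoids invoking Schur and keeps everything at the level of real matrix entries. For part~(i) the paper simply cites an earlier reference, whereas you supply the standard contraction-mapping argument in full; your extension-by-zero and Banach fixed-point outline is correct, including the final step that bijectivity on $\R^d$ together with $X_1(w)=\mathrm{id}$ on $\R^d\setminus\Omega$ forces $X_1(w)(\Omega)=\Omega$.
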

\begin{proof}
The former is proved in \cite[Proposition 1]{RuiTabata2002}.
We prove the latter only in the case $d=3$ since the proof in $d=2$ is much easier.
Let $I$ be the $3\times 3$ identity matrix, $A=(a_{ij})$ and $\boldmath{a}_j=(a_{1j}, a_{2j}, a_{3j})^T$,
where $a_{ij}=\Delta t \, \partial w_i/ \partial x_j $ for $i,j=1,2,3$.
The notation $|\cdot|$ stands for the absolute value, or the Euclidean norm in $\R^3$ or $\R^{3\times 3}$.
From the condition
\begin{equation*}
\abs{a}=(\abs{\boldmath a_1}^2 + \abs{\boldmath a_2}^2 + \abs{\boldmath a_3}^2)^{1/2} \leq 1/4,
\end{equation*} 
we obtain 
\begin{equation*}
	\det(A) \leq \abs{\boldmath a_1} \abs{\boldmath a_2} \abs{\boldmath a_3} \leq \left(\frac{1}{4\sqrt{3}} \right)^3.
\end{equation*}
Then, we have
\begin{equation*}
\begin{split}
	&\left\lvert \jac{X_1(w)}{x} - 1 \right\rvert = |\det(I-A) -1| \\
	= & |-(a_{11}+a_{22}+a_{33}) \\
		&+ a_{11}a_{22}+a_{22}a_{33}+a_{33}a_{11} 
		- a_{12}a_{21} - a_{23}a_{32} - a_{31}a_{13} - \det(A)| \\
	\leq & |a_{11}+a_{22}+a_{33}| \\ 
		& +| a_{11}a_{22}+a_{22}a_{33}+a_{33}a_{11} - a_{12}a_{21} - a_{23}a_{32} - a_{31}a_{13}|
		+ |\det(A)|\\
	\leq & \sqrt{3}|a|+|a|^2+|\det(A)|
	\leq 1/2,
\end{split}
\end{equation*}
which implies the result.
\qed
\end{proof}
\begin{lemma}
	\label{lemm:twofunc}
	Let $1\leq q <\infty$, $1\leq p \leq \infty$, $1/p+1/p'=1$ and
	$w_i\in W_0^{1,\infty}(\Omega)^d$, $i=1,2$.
	Under the condition $\Delta t \snormsobi{w_i}{1}{\infty}\leq\done$,
	it holds that,
	for $\psi\in W^{1,qp'}(\Omega)^d$, 
	\begin{equation*}
		\normsob{\psi\circ X_1(w_1) - \psi\circ X_1(w_2)}{0}{q} \leq 
		2^{1/(qp')}
		\Delta t\normsob{w_1-w_2}{0}{pq} 
		\normsob{\nabla \psi}{0}{qp'},
	\end{equation*} 
	where $X_1(\cdot)$ is defined in (\ref{eq:x1def}).
\end{lemma}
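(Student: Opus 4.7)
\medskip
\noindent\textbf{Proof plan.} The plan is to reduce the composition difference to a line integral of $\nabla\psi$ and then track the resulting norms through Jensen's inequality, H\"older's inequality, and a change of variables. For $\theta\in[0,1]$ set $w_\theta\equiv (1-\theta)w_2+\theta w_1$ and $Y_\theta(x)\equiv X_1(w_\theta)(x)=(1-\theta)X_1(w_2)(x)+\theta X_1(w_1)(x)$. The convexity of the seminorm gives $\snormsobi{w_\theta}{1}{\infty}\leq \max_i\snormsobi{w_i}{1}{\infty}\leq \done/\Delta t$, so Lemma~\ref{lemm:jacobiest}~(ii) applies uniformly in $\theta$: $Y_\theta$ is a bijection of $\Omega$ onto itself whose Jacobian is bounded below by $1/2$.

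First I would write, by the fundamental theorem of calculus and the identity $X_1(w_1)(x)-X_1(w_2)(x)=-\Delta t\,(w_1(x)-w_2(x))$,
\begin{equation*}
\psi(X_1(w_1)(x))-\psi(X_1(w_2)(x))
= -\Delta t \int_0^1 \nabla\psi(Y_\theta(x))\,d\theta\cdot(w_1(x)-w_2(x)),
\end{equation*}
so that, after taking absolute values and applying Jensen's inequality to the $d\theta$ integral,
\begin{equation*}
\bigl|\psi\circ X_1(w_1)-\psi\circ X_1(w_2)\bigr|(x)^q
\leq \Delta t^q\,|w_1(x)-w_2(x)|^q\int_0^1|\nabla\psi(Y_\theta(x))|^q\,d\theta.
\end{equation*}

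Next I would integrate over $\Omega$ and apply H\"older's inequality with exponents $p$ and $p'$ in the $x$ variable, factoring $|w_1-w_2|^q$ against the $\theta$-averaged $|\nabla\psi|^q$ term. A second application of Jensen's inequality (with exponent $p'\geq 1$) moves the $p'$-th power inside the $\theta$-integral, producing $|\nabla\psi(Y_\theta(x))|^{qp'}$. At this point I would perform the change of variables $y=Y_\theta(x)$: by Lemma~\ref{lemm:jacobiest}~(ii) the inverse Jacobian is at most $2$, and $Y_\theta(\Omega)=\Omega$, giving
\begin{equation*}
\int_\Omega |\nabla\psi(Y_\theta(x))|^{qp'}\,dx \leq 2\int_\Omega |\nabla\psi(y)|^{qp'}\,dy
= 2\,\normsob{\nabla\psi}{0}{qp'}^{qp'}
\end{equation*}
uniformly in $\theta$. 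Combining the estimates and taking the $q$-th root produces the prefactor $2^{1/(qp')}$ exactly as stated.

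The only slightly delicate point is the interchange of the $dx$ and $d\theta$ integrations and the two successive uses of Jensen's inequality, since one must apply Jensen's before H\"older's in order to keep $|w_1-w_2|$ as an $L^{pq}(\Omega)$ factor and then re-enter the $\theta$-integral with the correct exponent $qp'$ so that the $L^{qp'}$ norm of $\nabla\psi$ emerges after the change of variables. The boundary case $p=\infty$, $p'=1$ is handled by the same computation with the $L^\infty$-$L^1$ version of H\"older, the factor $2^{1/(qp')}$ reducing to $2^{1/q}$. Boundedness of $Y_\theta$ as a self-map of $\Omega$ and the Jacobian bound are the only places where the hypothesis $\Delta t\,\snormsobi{w_i}{1}{\infty}\leq \done$ enters.
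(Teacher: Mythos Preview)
Your argument is correct and is essentially the same as the paper's: the paper simply states that the lemma ``is a direct consequence of \cite[Lemma 4.5]{AchdouGuermond2000} and Lemma~\ref{lemm:jacobiest}-(ii),'' and what you have written is precisely a self-contained proof of that cited Achdou--Guermond estimate (fundamental theorem of calculus along the segment $Y_\theta$, H\"older in $x$, Jensen in $\theta$, change of variables) together with the Jacobian bound $\det(\partial Y_\theta/\partial x)\ge 1/2$ from Lemma~\ref{lemm:jacobiest}-(ii). The only cosmetic point is that the bijectivity $Y_\theta(\Omega)=\Omega$ comes from part~(i) of Lemma~\ref{lemm:jacobiest}, not part~(ii).
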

Lemma \ref{lemm:twofunc} is a direct consequence of \cite[Lemma 4.5]{AchdouGuermond2000} and Lemma \ref{lemm:jacobiest}-(ii).
\begin{lemma}
	\label{lemm:DouglasRussell}
	Let $w\in W_0^{1,\infty}(\Omega)^d$. 
	Under the condition $\Delta t\snormsobi{w}{1}{\infty}\leq\done$, there exists a positive constant
	$\adr$
	such that, for $\psi \in L^2(\Omega)^d$,
	\begin{equation*}
		\mynorm{\psi-\psi\circ X_1(w)}_{-1} 
		\leq \adr \Delta t \normsob{w}{1}{\infty}  \normsob{\psi}{0}{2},
	\end{equation*}
	where $X_1(\cdot)$ is defined in (\ref{eq:x1def}).
\end{lemma}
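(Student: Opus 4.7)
My plan is to prove the estimate by duality. By definition of the $-1$ norm,
\begin{equation*}
\norm{\psi - \psi\circ X_1(w)}_{-1} = \sup_{v\in H^1_0(\Omega)^d \setminus\{0\}} \frac{(\psi-\psi\circ X_1(w),v)}{\norm{v}_1},
\end{equation*}
so it suffices to produce the bound $|(\psi-\psi\circ X_1(w),v)|\le c\,\Delta t\,\normsob{w}{1}{\infty}\normsob{\psi}{0}{2}\norm{v}_1$ uniformly in $v\in H^1_0(\Omega)^d$. The idea is to transfer the composition from $\psi$ to $v$ via a change of variables. Under the hypothesis $\Delta t\snormsobi{w}{1}{\infty}\le 1/4$, Lemma \ref{lemm:jacobiest} guarantees that $X_1(w):\Omega\to\Omega$ is bijective with Jacobian $J$ satisfying $1/2\le J\le 3/2$. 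Writing $\Phi\equiv X_1(w)^{-1}$ and changing variables $y=X_1(w)(x)$ in the integral $(\psi\circ X_1(w),v)$ gives
\begin{equation*}
(\psi-\psi\circ X_1(w),v) = \bigl(\psi,\; v - (v\circ \Phi)\,J^{-1}\circ\Phi^{-1}\bigr),
\end{equation*}
so by Cauchy--Schwarz the task reduces to estimating $\mynorm{v - (v\circ\Phi)\,\tilde J^{-1}}_0$ where $\tilde J(y)=J(\Phi(y))$.

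I would then split
\begin{equation*}
v - (v\circ\Phi)\,\tilde J^{-1} = (v - v\circ\Phi) + (v\circ\Phi)(1 - \tilde J^{-1})
\end{equation*}
and treat the two pieces separately. For the second piece, using $\tilde J\ge 1/2$ gives $|1-\tilde J^{-1}|\le 2|\tilde J-1|$, and a direct expansion of $\det(I-\Delta t\nabla w)$ (along the lines of the proof of Lemma \ref{lemm:jacobiest}-(ii)) yields $\mynorm{\tilde J-1}_{0,\infty}\le c\Delta t\snormsobi{w}{1}{\infty}$; together with $\mynorm{v\circ\Phi}_0\le (3/2)^{1/2}\mynorm{v}_0$ (another change of variables) this piece is bounded by $c\Delta t\snormsobi{w}{1}{\infty}\mynorm{v}_0$. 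For the first piece, observe that $\Phi$ is itself of the form $X_1(\tilde w)$ for $\tilde w(y)\equiv -w(\Phi(y))$, because $\Phi(y)=y+w(\Phi(y))\Delta t$; hence Lemma \ref{lemm:twofunc} applied with $w_1=\tilde w$, $w_2=0$, $q=2$, $p=\infty$ (and the bound $\mynorm{\tilde w}_{0,\infty}\le\mynorm{w}_{0,\infty}$) gives
\begin{equation*}
\mynorm{v-v\circ\Phi}_0 \le c\,\Delta t\,\mynorm{w}_{0,\infty}\snormsobi{v}{1}{2}.
\end{equation*}
Combining the two pieces and using Poincar\'e (Lemma \ref{lemm:poin}) yields $\mynorm{v-(v\circ\Phi)\tilde J^{-1}}_0\le c\Delta t\normsob{w}{1}{\infty}\norm{v}_1$, which after Cauchy--Schwarz delivers the claimed inequality with $\adr$ depending only on $\Omega$ and the absolute constants from Lemmas \ref{lemm:poin}, \ref{lemm:jacobiest}, and \ref{lemm:twofunc}.

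The main obstacle I expect is the handling of the inverse map $\Phi$: verifying that $\Phi$ can be rewritten as $X_1(\tilde w)$ with $\tilde w$ lying in $W_0^{1,\infty}$ with controlled norm (this requires the implicit function argument used in Lemma \ref{lemm:jacobiest}-(i) together with a chain-rule bound $\snormsobi{\tilde w}{1}{\infty}\le c\snormsobi{w}{1}{\infty}$ that stays uniform under the smallness hypothesis) and justifying the change-of-variables identity for $\psi\in L^2$, which is a standard density argument but needs to be stated cleanly. Once these preparatory facts are in hand, the rest of the computation is essentially a bookkeeping application of Lemmas \ref{lemm:jacobiest} and \ref{lemm:twofunc}.
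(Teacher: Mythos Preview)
Your approach is correct and is essentially the standard duality argument behind the result the paper cites. The paper itself gives no proof: it simply says the lemma ``is obtained from \cite[Lemma~1]{DouglasRussell1982} and Lemma~\ref{lemm:jacobiest}-(ii)''. What you have written is, in substance, the content of that Douglas--Russell lemma, so there is nothing to compare beyond noting that you supply details the paper omits.

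Two small remarks. First, a typo: in the displayed identity after the change of variables you wrote $J^{-1}\circ\Phi^{-1}$, but the correct weight is $J^{-1}\circ\Phi$ (equivalently $(J\circ\Phi)^{-1}$), as your subsequent definition $\tilde J(y)=J(\Phi(y))$ shows.

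Second, the obstacle you flag about $\tilde w$ can be bypassed entirely. Instead of writing $\Phi=X_1(\tilde w)$ and invoking Lemma~\ref{lemm:twofunc} with $\tilde w$, perform one more change of variables $y=X_1(w)(x)$ in the $L^2$ norm to get
\[
\|v-v\circ\Phi\|_0^2=\int_\Omega |v(X_1(w)(x))-v(x)|^2\,J(x)\,dx\le \tfrac{3}{2}\,\|v\circ X_1(w)-v\|_0^2,
\]
and then apply Lemma~\ref{lemm:twofunc} directly with $w_1=w$, $w_2=0$. This uses only the hypothesis $\Delta t\,|w|_{1,\infty}\le 1/4$ already assumed, so no implicit-function bound on $|\tilde w|_{1,\infty}$ is needed and the constants stay clean.
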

Lemma \ref{lemm:DouglasRussell} is obtained from \cite[Lemma 1]{DouglasRussell1982} and Lemma \ref{lemm:jacobiest}-(ii). 
\subsection{Estimates of $e_h^n$ under some assumptions}\label{subsec:p2p1proofbegin}
Let
\begin{equation}\label{eq:errordef}
(e_h^n,\eps_h^n) \equiv (u_h^n-\pro{u}^n, p_h^n-\pro{p}^n), \quad
\eta (t) \equiv u(t)-\pro{u}(t),
\end{equation}
where 
$(u,p)$ is the solution of (\ref{NS}), 
$(\pro{u}(t), \pro{p}(t))$ is the Stokes projection of $(u(t),p(t))$ defined in (\ref{eq:stokesproj})
and $(u_h^n, p_h^n)$ is the solution of \zettai \ at the step $n$.
From (\ref{eq:NSweak}), (\ref{eq:stokesproj}) and (\ref{eq:scheme2}) 
we have the error equations in $(e_h^n,\eps_h^n)$:
\begin{subequations}\label{eq:errorWhole}
\begin{align}
\label{eq:errorMain}
&&	\left(\bwd e_h^n,v_h\right) 	+a(e_h^n,v_h)+b(v_h,\eps_h^n) 	&=\sum_{i=1}^4 (R_i^n,v_h), &\forall v_h \in V_h,\\
&&	b(e_h^n,q_h) &= 0, &\forall q_h \in Q_h, \label{errorMainb}
\end{align}
\end{subequations}
for $n=1,\dots,N_T$, where  
\begin{equation}\label{eq:defR}
\begin{split}
	R_{1}^n &\equiv \frac{Du^n}{Dt}
	-\frac{u^n-u^{n-1}\circ X_1(u^{n-1})}{\Delta t}, \\
	R_{2}^n &\equiv \frac{u^{n-1}\circ X_1(\schemeint u_h^{n-1})-u^{n-1}\circ X_1(u^{n-1})}{\Delta t},\\
	R_{3}^n &\equiv \frac{\eta^n-\eta^{n-1}\circ X_1(\schemeint u_h^{n-1})}{\Delta t}, \quad
	R_{4}^n \equiv -\frac{e_h^{n-1}-e_h^{n-1}\circ X_1(\schemeint u_h^{n-1})}{\Delta t}.
\end{split}
\end{equation}
% % %
\begin{lemma}\label{lemm:errorr}
Suppose Hypotheses \ref{hypo:NSreg} and \ref{hypo:inverse}. 
Under the condition 
\begin{equation}\label{eq:errorrCond}
{\Delta t\absi{u^{n-1}}_{1,\infty}, ~ \Delta t\absi{\lagk{1} u_h^{n-1}}_{1,\infty} \leq \done},
\end{equation}
it holds that
\begin{subequations}
\begin{align}
	\mynorm{R_{1}^n}_0 \leq & \be_1 \sqrt{\Delta t} \mynorm{u}_{Z^2(t^{n-1},t^n)}, \label{eq:lemmR1}\\
	\mynorm{R_{2}^n}_0 \leq & 
	\be_2 \mynorm{e_h^{n-1}}_0 + \be_3 h^2(\mixn{(u,p)^{n-1}}_{H^3\times H^2} + \snormsob{u^{n-1}}{2}{2}),
	\label{eq:lemmR2new}\\
	\mynorm{R_{3}^n}_0 \leq & \be_4 \frac{h^2}{\sqrt{\Delta t}} \Bigl(  \mixn{(u,p)}_{H^1(t^{n-1},t^n;H^{3}\times H^2)} 
	\notag \\ &
	+ \normsob{u_h^{n-1}}{0}{\infty} \mixn{(u,p)}_{L^2(t^{n-1},t^n;H^{3}\times H^2)} \Bigr),\label{eq:lemmR3new}\\
	\mynorm{R_{4}^n}_0 \leq & \be_5 \normsob{u_h^{n-1}}{0}{\infty} \abs{e_h^{n-1}}_1 \label{eq:lemmR4}
\end{align}
\end{subequations}
for $n=1,\dots ,N_T$, where 
$\be_1 = \be_1(\mynorm{u}_{C(W^{1,\infty})})$,
$\be_2 = \be_2(\abs{u}_{C(W^{1,\infty})}, \aintfour)$, 
$\be_3 = \be_3(\abs{u}_{C(W^{1,\infty})}, \ainteh, \aintfour, \astokes)$,
$\be_4 = \be_4(\astokes)$, 
$\be_5 = \sqrt 2$ 
and the notation $\be_i(A)$ means that a positive constant $\be_i$ depends on a set of parameters $A$.
\end{lemma}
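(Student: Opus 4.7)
The four residual terms in \eqref{eq:defR} represent, respectively, the Lagrange--Galerkin consistency error with the exact velocity ($R_1^n$), the perturbation from replacing the exact velocity by the locally linearized discrete velocity inside $X_1$ ($R_2^n$), the Stokes-projection contribution transported along the approximate characteristic ($R_3^n$), and the composition error on $e_h^{n-1}$ itself ($R_4^n$). I would handle them in turn. Throughout, the assumption \eqref{eq:errorrCond} lets me invoke Lemma \ref{lemm:jacobiest}-(ii) for both $X_1(u^{n-1})$ and $X_1(\lagk{1}u_h^{n-1})$, giving Jacobians in $[1/2,3/2]$ and hence a factor of at most $\sqrt{2}$ in every $L^2$ change of variables.

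For $R_1^n$ I would follow the classical Pironneau--S\"uli argument. Parametrising the straight-line segment by $y(s;x)=x-(t^n-s)u^{n-1}(x)$, so that $y(t^n)=x$ and $y(t^{n-1})=X_1(u^{n-1})(x)$, the identity
\[
u^n(x)-u^{n-1}\!\circ X_1(u^{n-1})(x) = \int_{t^{n-1}}^{t^n}\bigl[u_s(s,y(s))+\nabla u(s,y(s))\cdot u^{n-1}(x)\bigr]ds,
\]
subtracted from $\Delta t\,(Du/Dt)(t^n,x)$, leaves an integrand which is a sum of Taylor remainders in the time and spatial arguments around $(t^n,x)$. Cauchy--Schwarz in $s$ and Lemma \ref{lemm:jacobiest}-(ii) then produce \eqref{eq:lemmR1}, with the $Z^2$-norm emerging from the second-order derivatives of $u$ along the material trajectory. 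For $R_4^n$ the corresponding fundamental theorem of calculus gives
\[
e_h^{n-1}(x)-e_h^{n-1}\!\circ X_1(\lagk{1}u_h^{n-1})(x) = \Delta t\int_0^1 \nabla e_h^{n-1}\bigl(x-\theta\Delta t\,\lagk{1}u_h^{n-1}(x)\bigr)\cdot \lagk{1}u_h^{n-1}(x)\,d\theta;
\]
pulling out $\|\lagk{1}u_h^{n-1}\|_{0,\infty}\leq\|u_h^{n-1}\|_{0,\infty}$ from Lemma \ref{lemm:intest}, changing variables uniformly in $\theta\in[0,1]$ via Lemma \ref{lemm:jacobiest}-(ii), and dividing by $\Delta t$ yields \eqref{eq:lemmR4} with $\beta_5=\sqrt{2}$.

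For $R_2^n$, both compositions share $\psi=u^{n-1}$, so Lemma \ref{lemm:twofunc} with $(q,p)=(2,1)$ gives $\|R_2^n\|_0\leq |u^{n-1}|_{1,\infty}\|\lagk{1}u_h^{n-1}-u^{n-1}\|_0$. The clean decomposition
\[
\lagk{1}u_h^{n-1}-u^{n-1} = \lagk{1}\bigl(u_h^{n-1}-\lagk{1}u^{n-1}\bigr)+\bigl(\lagk{1}u^{n-1}-u^{n-1}\bigr)
\]
is the point: because $\lagk{1}u^{n-1}\in V_h$, the argument of the outer Lagrange interpolant in the first summand lies in $V_h$, so the finite-dimensional stability \eqref{eq:stabfinitedim} produces $\aintfour\|u_h^{n-1}-\lagk{1}u^{n-1}\|_0\leq\aintfour\bigl(\|e_h^{n-1}\|_0+\|\eta^{n-1}\|_0+\|\lagk{1}u^{n-1}-u^{n-1}\|_0\bigr)$. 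Lemma \ref{lemm:stokespro}-(i) combined with Poincar\'e bounds $\|\eta^{n-1}\|_0$ by $\apoin\astokes h^2\mixn{(u,p)^{n-1}}_{H^3\times H^2}$, and Lemma \ref{lemm:intest} bounds $\|\lagk{1}u^{n-1}-u^{n-1}\|_0$ by $\ainteh h^2|u^{n-1}|_2$; substituting and multiplying by $|u^{n-1}|_{1,\infty}$ yields \eqref{eq:lemmR2new}.

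The main obstacle is $R_3^n$, since the target bound contains $L^2$-in-time norms of $(u,p)$ scaled by $h^2/\sqrt{\Delta t}$, so any argument that first freezes $\eta$ at $t^{n-1}$ and then applies $\|\eta-\eta\!\circ X_1\|_0\lesssim\Delta t\|\eta\|_1$ loses a $\sqrt{\Delta t}$ factor. The remedy is again to reparametrise along the linear approximate characteristic $y(s;x)=x-(t^n-s)\lagk{1}u_h^{n-1}(x)$, with $y(t^n)=x$ and $y(t^{n-1})=X_1(\lagk{1}u_h^{n-1})(x)$, and to write
\[
\eta^n(x)-\eta^{n-1}\!\circ X_1(\lagk{1}u_h^{n-1})(x)=\int_{t^{n-1}}^{t^n}\bigl[\eta_s(s,y(s))+\nabla\eta(s,y(s))\cdot\lagk{1}u_h^{n-1}(x)\bigr]ds.
\]
Dividing by $\Delta t$ and applying Cauchy--Schwarz in $s$ extracts the $1/\sqrt{\Delta t}$ factor; Lemma \ref{lemm:jacobiest}-(ii) applied to $x\mapsto y(s;x)$ (uniformly in $s\in[t^{n-1},t^n]$) converts the spatial integrands into $\|\eta_t(s)\|_0$ and $\|\nabla\eta(s)\|_0$; and Lemma \ref{lemm:stokespro}-(i), applied time-pointwise together with the commutativity of the Stokes projection with time differentiation of the data, bounds the resulting $L^2(t^{n-1},t^n;L^2)$-norms by $\astokes h^2\mixn{(u,p)}_{H^1(t^{n-1},t^n;H^3\times H^2)}$ and $\astokes h^2\mixn{(u,p)}_{L^2(t^{n-1},t^n;H^3\times H^2)}$ respectively, producing \eqref{eq:lemmR3new}. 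The factor $\|\lagk{1}u_h^{n-1}\|_{0,\infty}\leq\|u_h^{n-1}\|_{0,\infty}$ in front of the second term again comes from Lemma \ref{lemm:intest}.
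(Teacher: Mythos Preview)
Your proposal is correct and follows essentially the same route as the paper's proof: the same linear-characteristic parametrisation for $R_1^n$ and $R_3^n$, the same application of Lemma~\ref{lemm:twofunc} together with the decomposition $\lagk{1}u_h^{n-1}-u^{n-1}=\lagk{1}(u_h^{n-1}-\lagk{1}u^{n-1})+(\lagk{1}u^{n-1}-u^{n-1})$ for $R_2^n$, and the same $\sqrt{2}$-constant argument for $R_4^n$. The only cosmetic differences are that the paper makes explicit the split $R_1^n=R_{11}^n+R_{12}^n$ (isolating the piece $(u^n-u^{n-1})\cdot\nabla u^n$ before the second-order Taylor step), quotes Lemma~\ref{lemm:twofunc} with $w_1=0$ for $R_4^n$ rather than unfolding the integral, and bounds $\|\eta^{n-1}\|_0$ directly by $\|\eta^{n-1}\|_1\le\astokes h^2\mixn{(u,p)^{n-1}}_{H^3\times H^2}$ without invoking Poincar\'e.
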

\begin{proof}
We prove (\ref{eq:lemmR1}).
We decompose $R_1^n$ as follows: 
\begin{equation*}
\begin{split}
	R_1^n(x) =& 
	\biggl\{\henbi{u^{n}}{t}(x)+(u^{n-1}(x)\cdot\nabla)u^n(x) -\frac{u^n-u^{n-1}\circ X_1(u^{n-1})}{\Delta t}(x) \biggr\}\\
	&+ (u^n(x)-u^{n-1}(x))\cdot \nabla u^n(x)
	\equiv R_{11}^n(x) + R_{12}^n(x).
\end{split}
\end{equation*}
Setting 
\[  y(x,s)=x+(s-1)\Delta t ~ u^{n-1}(x), \quad t(s)=t^{n-1}+s \Delta t, 
\]
we have 
\[
	\frac{u^n-u^{n-1}\circ X_1(u^{n-1})}{\Delta t} = \frac{1}{\Delta t}\big[ u(y(\cdot,s),t(s)) \big]_{s=0}^1, 
\]
which implies that  
\begin{align*}
	R_{11}^n &= \henbi{u^n}{t} + (u^{n-1}\cdot \nabla) u^n
		- \int_{0}^{1} \left\{ (u^{n-1}(\cdot)\cdot \nabla) u + \frac{\partial u}{\partial t} \right\} (y(\cdot,s),t(s))    ds 
\\ &
=
 \Delta t \int_{0}^{1} ds \int_{s}^{1} \left\{ \left( u^{n-1}(\cdot)\cdot \nabla  + \frac{\partial}{\partial t} \right)^2 u  \right\} (y(\cdot,s_1),t(s_1)) d s_1 
\\ &
= \Delta t  \int_{0}^{1} s_1 \left\{ \left( u^{n-1}(\cdot)\cdot \nabla  + \frac{\partial}{\partial t} \right)^2 u  \right\}  (y(\cdot,s_1),t(s_1))    d s_1 . 
\end{align*}
Hence, we have 
\begin{align*}
\| R_{11}^n \|_0 
& \le \Delta t  \int_{0}^{1} s_1 
  \left\| 
       \left\{ \left( u^{n-1}(\cdot)\cdot \nabla  + \frac{\partial}{\partial t} \right)^2 u  \right\}  (y(\cdot,s_1),t(s_1)) 
  \right\|_0   d s_1  \nonumber
\\
& \le \be_1'(\mynorm{u}_{C(L^{\infty})}) \sqrt{\Delta t} \mynorm{u}_{Z^2(t^{n-1},t^n)}, 
 \label{est:r1}
\end{align*}
where we have used the transformation of independent variables from $x$ to $y$ and $s_1$ to $t$ and 
the estimate 
$| \det(\partial x / \partial y) | \le 2$ by virtue of Lemma \ref{lemm:jacobiest}-(ii).  
It is easy to show
\begin{equation*}
	\normsobi{R_{12}^n}{0}{2} \leq \sqrt{\Delta t} \snormsobi{u^n}{1}{\infty} \mynorm{\henbi{u}{t}}_{L^2(t^{n-1},t^n;L^2)}.
\end{equation*}
From the triangle inequality we get (\ref{eq:lemmR1}).

We prove (\ref{eq:lemmR2new}).
Using Lemma \ref{lemm:twofunc} with $q=2$, $p=1$, $p'=\infty$, $w_1=\schemeint u_h^{n-1}$, $w_2=u^{n-1}$ and $\psi = u^{n-1}$, 
we have
\begin{equation*}\label{eq:R3est1}
\begin{split}
	\normi{R_{2}^n}_0 
	&\leq \abs{u^{n-1}}_{1,\infty} \normsobi{\schemeint u_h^{n-1}-u^{n-1}}{0}{2} \\
	&\leq 
	\abs{u^{n-1}}_{1,\infty}(\normsobi{\schemeint u_h^{n-1}-\schemeint u^{n-1}}{0}{2}+\normsobi{\schemeint u^{n-1}-u^{n-1}}{0}{2}).
\end{split}
\end{equation*}
From Lemmas \ref{lemm:intest} and \ref{lemm:stokespro}-(i) we evaluate the first term as follows: 
\begin{equation}\label{eq:interpoleR3est} %
\begin{split}
	&\normi{\schemeint u_h^{n-1} - \schemeint u^{n-1}}_0 = \normi{\schemeint (u_h^{n-1} - \schemeint u^{n-1})}_0 \\
	\leq & \aintfour \normi{u_h^{n-1} - \schemeint u^{n-1}}_0 \\
	\leq & \aintfour(\mynorm{u_h^{n-1}-\pro u^{n-1}}_0 + \mynorm{\pro u^{n-1} - u^{n-1}}_0 + \normi{u^{n-1} - \schemeint u^{n-1}}_0) \\
	\leq & \aintfour(\normi{e_h^{n-1}}_0 + \astokes h^2\mixn{(u,p)^{n-1}}_{H^3\times H^2} + \ainteh h^2\snormsob{u^{n-1}}{2}{2}).
\end{split}
\end{equation}
The second term is evaluated as follows: 
\begin{equation*}
\normsobi{\schemeint u^{n-1}-u^{n-1}}{0}{2}\leq \ainteh h^2 \snormsob{u^{n-1}}{2}{2}.
\end{equation*}
Thus, we have
\begin{equation*}\label{eq:R3est2}
\begin{split}
	\normsobi{R_2^n}{0}{2}
	\leq \snormsob{u^{n-1}}{1}{\infty} &\Bigl\{\aintfour  (\mynorm{e_h^{n-1}}_0 + \astokes h^2\mixn{(u,p)^{n-1}}_{H^3\times H^2}) \\ &+ \ainteh(1+\aintfour) h^2 \snormsob{u^{n-1}}{2}{2} \Bigr\},
\end{split}
\end{equation*}
which implies (\ref{eq:lemmR2new}).

We prove (\ref{eq:lemmR3new}). Let
\begin{equation*}
	y(x)=x+(s-1)\Delta t \lagk 1 u_h^{n-1}(x), \quad t(s)=t^{n-1}+s\Delta t.
\end{equation*}
Since $R_3^n$ is rewritten as 
\begin{equation*}
	R_3^n = \int_{0}^{1} \left\{ \henbi{\eta}{t} + (\lagk 1 u_h^{n-1}(\cdot) \cdot \nabla) \eta  \right\} (y(\cdot,s), t(s))  ds,
\end{equation*}
we have, by using the change of the variable and Lemma \ref{lemm:jacobiest}-(ii),
\begin{align*}
	\normsob{R_3^n}{0}{2} 
	&\leq \biggl\lVert\int_{0}^{1}  \left| \henbi{\eta}{t} \right| (y(\cdot,s), t(s)) ds \biggr\rVert_0 + \biggl\lVert\int_{0}^{1} |(\lagk 1 u_h^{n-1}(\cdot) \cdot \nabla) \eta|(y(\cdot,s), t(s))   ds \biggr\rVert_0  \\
	&\leq \frac{\sqrt 2}{\sqrt{\Delta t}} \biggl( \mynorm{\henbi{\eta}{t}}_{L^2(t^{n-1}, t^n; L^2)} + \normi{\lagk 1 u_h^{n-1}}_{0,\infty}\mynorm{\nabla \eta}_{L^2(t^{n-1}, t^n; L^2)}\biggr) \\
	& \leq \frac{\sqrt{2}\astokes h^2}{\sqrt{\Delta t}} \left( \mixn{(u,p)}_{H^1(t^{n-1}, t^n; H^3\times H^2)} +  \mynorm{u_h^{n-1}}_{0,\infty}\mixn{(u,p)}_{L^2(t^{n-1}, t^n; H^3\times H^2)}  \right),
\end{align*}
which implies (\ref{eq:lemmR3new}).

The inequality (\ref{eq:lemmR4}) is obtained  
from Lemma \ref{lemm:twofunc} with $q=2$, $p=\infty$, $p'=1$, $w_1=0$, $w_2=\lagk 1 u_h^{n-1}$ and $\psi = e_h^{n-1}$.
\qed
\end{proof}
\begin{lemma} \label{lemm:mainprop}
Suppose Hypotheses \ref{hypo:NSreg} and \ref{hypo:inverse}. 
Let $n \in \{1,\cdots,N_T  \}$ be any integer and let $u_h^{n-1} \in V_h$ be known.   
Suppose that $u_h^{n-1}$ satisfies 
\begin{align}\label{uhn-1}
b(u_h^{n-1}, q_h) &= 0, &\forall q_h \in Q_h.  
\end{align}     
Under the condition \eqref{eq:errorrCond}, 
there exists a solution $(u_h^{n},p_h^{n})$ of \eqref{eq:scheme2}
and
it holds that 
\begin{equation*}
\begin{split}
	&\mynorm{\bwd{e_h^n}}_0^2 + \bwd{(\nu\abs{e_h^n}_1^2)} \\
	\leq &
	\be_{21}(\normsob{u_h^{n-1}}{0}{\infty}) \nu \abs{e_h^{n-1}}_1^2 
	+ \be_{22}(\normsob{u_h^{n-1}}{0}{\infty}) 
	\Bigl\{
		\Delta t \mynorm{u}_{Z^2(t^{n-1},t^n)}^2 \\
		&+ \frac{h^4}{\Delta t}  \mixn{(u,p)}_{H^1(t^{n-1},t^n;H^{3}\times H^2)}^2 
		+ h^4 \left( \mixn{(u,p)^{n-1}}_{H^3\times H^2}^2 +\snormsob{u^{n-1}}{2}{2}^2 \right)
	\Bigr\}, 
\end{split}
\end{equation*}
where $e_h^n$ is defined in (\ref{eq:errordef}),
and $\be_{21}(\xi)$ and $\be_{22}(\xi)$ are the functions defined in (\ref{eq:funcbe}) below.
\end{lemma}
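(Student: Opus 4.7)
The strategy is a standard energy argument on the error system \eqref{eq:errorWhole}, but with the specific test function $v_h = \bwd e_h^n$ that simultaneously produces the two left-hand side quantities of the claimed estimate. I first dispose of existence: for given $u_h^{n-1}$, system \eqref{eq:scheme2} is a linear saddle-point problem with bilinear form $(\cdot,\cdot)/\Delta t + a(\cdot,\cdot)$, which is coercive on $V_h$, together with the constraint form $b$. Existence and uniqueness of $(u_h^n,p_h^n)$ therefore follow from Lemma \ref{lemm:infsup}.

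Next, I would verify that $\bwd e_h^n$ is discretely divergence-free. From \eqref{errorMainb} one has $b(e_h^n,q_h)=0$; from the hypothesis \eqref{uhn-1} and the Stokes-projection property $b(\pro u^{n-1},q_h)=b(u^{n-1},q_h)=0$ one also gets $b(e_h^{n-1},q_h)=0$, hence $b(\bwd e_h^n,q_h)=0$ for every $q_h\in Q_h$. Testing \eqref{eq:errorMain} with $v_h=\bwd e_h^n$ thus kills the pressure term. Using the symmetry of $a$ and the standard telescoping identity
\begin{equation*}
a(e_h^n,\bwd e_h^n)=\tfrac{1}{2}\bwd(\nu|e_h^n|_1^2)+\tfrac{\Delta t}{2}\nu|\bwd e_h^n|_1^2,
\end{equation*}
I obtain
\begin{equation*}
\|\bwd e_h^n\|_0^2+\tfrac{1}{2}\bwd(\nu|e_h^n|_1^2)+\tfrac{\Delta t}{2}\nu|\bwd e_h^n|_1^2=\sum_{i=1}^4(R_i^n,\bwd e_h^n).
\end{equation*}

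The right-hand side is controlled by Cauchy--Schwarz and Young's inequality: for each $i$, $(R_i^n,\bwd e_h^n)\le \tfrac{1}{8}\|\bwd e_h^n\|_0^2+2\|R_i^n\|_0^2$. Summing over $i$ absorbs $\tfrac{1}{2}\|\bwd e_h^n\|_0^2$ into the left-hand side, leaving
\begin{equation*}
\tfrac{1}{2}\|\bwd e_h^n\|_0^2+\tfrac{1}{2}\bwd(\nu|e_h^n|_1^2)\le 2\sum_{i=1}^4\|R_i^n\|_0^2.
\end{equation*}
The bounds of Lemma \ref{lemm:errorr} apply since hypothesis \eqref{eq:errorrCond} is in force. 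Squaring each bound and using Poincar\'e's inequality (Lemma \ref{lemm:poin}) to replace $\|e_h^{n-1}\|_0^2$ coming from \eqref{eq:lemmR2new} by $(\apoin^2/\nu)\,\nu|e_h^{n-1}|_1^2$, the $R_2^n$ and $R_4^n$ contributions both combine into a term $\be_{21}(\|u_h^{n-1}\|_{0,\infty})\,\nu|e_h^{n-1}|_1^2$, whereas the $R_1^n$ and $R_3^n$ contributions together with the $h^4$ part of the $R_2^n$ bound produce exactly the data term prefactored by $\be_{22}(\|u_h^{n-1}\|_{0,\infty})$. Multiplying the inequality by $2$ then yields the claimed estimate.

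The main delicate point is ensuring that \emph{all} error contributions quadratic in $e_h^{n-1}$ can be encoded in the single coefficient $\be_{21}$ depending only on $\|u_h^{n-1}\|_{0,\infty}$ (the $R_2$ contribution contains factors of $|u^{n-1}|_{1,\infty}$ and of the interpolation constants, which one must absorb into a $\|u_h^{n-1}\|_{0,\infty}$-independent part plus a genuinely $\|u_h^{n-1}\|_{0,\infty}$-dependent part, matching the structure of $\be_{21}$, $\be_{22}$ advertised in \eqref{eq:funcbe}). Apart from this bookkeeping step, the argument is routine.
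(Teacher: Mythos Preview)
Your proof is correct and follows essentially the same route as the paper: test \eqref{eq:errorMain} with $v_h=\bwd e_h^n$, use the discrete divergence-free property (from \eqref{uhn-1} and the Stokes projection) to eliminate the pressure term, absorb $\tfrac{1}{2}\|\bwd e_h^n\|_0^2$ via Young's inequality, insert the bounds of Lemma~\ref{lemm:errorr}, and convert $\|e_h^{n-1}\|_0$ to $|e_h^{n-1}|_1$ via Poincar\'e. The only small difference is in the existence step: the paper first observes that condition \eqref{eq:errorrCond} makes $X_1(\lagk{1}u_h^{n-1}):\Omega\to\Omega$ bijective (Lemma~\ref{lemm:jacobiest}-(i)), so that the composite-function term in \eqref{eq:scheme2} is well defined, after which your saddle-point argument applies.
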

\begin{proof}
Since it holds that $\Delta t\absi{\lagk{1} u_h^{n-1}}_{1,\infty} \leq \done$, the mapping $X_1(\lagk 1 u_h^{n-1}):\Omega \to \Omega$ is bijective from Lemma \ref{lemm:jacobiest}-(i).
Hence, there exists a solution $(u_h^n, p_h^n)$ of \eqref{eq:scheme2}. 
Substituting $v_h = \bwd{e_h^n}$ in (\ref{eq:errorMain}), we have
\begin{equation}\label{eq:erroreqSubstitute}
	\normsob{\bwd{e_h^n}}{0}{2}^2 + \bwd{\left(\frac{\nu}{2} \normsob{\nabla e_h^n}{0}{2}^2\right)} + b(\bwd{e_h^n}, \eps_h^n) \leq \sum_{i=1}^4 (R_{i}^n, \bwd{e_h^n}).  
\end{equation}
From \eqref{uhn-1} and \eqref{eq:scheme2} the term $b(\bwd{e_h^n}, \eps_h^n)$ of the left-hand side vanishes.  
Using Schwarz' and Young's inequalities and Lemma \ref{lemm:errorr}, we have 
\begin{equation*}
\begin{split}
	&\normsob{\bwd{e_h^n}}{0}{2}^2 + \bwd{\left(\frac{\nu}{2} \abs{e_h^n}_1^2\right)} 
	\leq 2 \biggl\{ \be_1^2 \Delta t  \mynorm{u}_{Z^2(t^{n-1},t^n)}^2 \\
	&+\left( \be_2 \mynorm{e_h^{n-1}}_0 + \be_3 h^2(\mixn{(u,p)^{n-1}}_{H^3\times H^2} + \snormsob{u^{n-1}}{2}{2}) \right)^2
	\\ 
	&+ \be_4^2 \frac{h^4}{\Delta t} \left(  \mixn{(u,p)}_{H^1(t^{n-1},t^n;H^{3}\times H^2)} + \normsob{u_h^{n-1}}{0}{\infty} \mixn{(u,p)}_{L^2(t^{n-1},t^n;H^{3}\times H^2)} \right)^2\\
	& 
	+ \be_5^2 \normsob{u_h^{n-1}}{0}{\infty}^2 \abs{e_h^{n-1}}_1^2 \biggr\}
	+ \frac{1}{2} \normsob{\bwd e_h^n}{0}{2}^2,
\end{split}
\end{equation*}
which implies that 
\begin{equation*}
\begin{split}
	&\bwd{(\nu\abs{e_h^n}_1^2)} + \normsob{\bwd{e_h^n}}{0}{2}^2 \\
	\leq &
	\be_{11} \Bigl(\mynorm{e_h^{n-1}}_0^2 
		+ \mynorm{u_h^{n-1}}_{0,\infty}^2 \abs{e_h^{n-1}}_1^2 \Bigr)
	+ \be_{12} 
	\Bigl\{
	 \Delta t \mynorm{u}_{Z^2(t^{n-1},t^n)}^2 \\
	&+\frac{h^4}{\Delta t} ( \mixn{(u,p)}_{H^1(t^{n-1},t^n;H^{3}\times H^2)}^2 
	+ \normsob{u_h^{n-1}}{0}{\infty}^2 \mixn{(u,p)}_{L^2(t^{n-1},t^n;H^{3}\times H^2)}^2) \\
	&
	+ h^4 \left( \mixn{(u,p)^{n-1}}_{H^3\times H^2}^2 + \snormsob{u^{n-1}}{2}{2}^2 \right) 
	\Bigr\}, 
\end{split}
\end{equation*}
where $\be_{11}$ and $\be_{12}$ 
are constants depending only on $\be_{1},\dots, \be_{5}$.
Using Poincar\'e's inequality $\normsobi{e_h^{n-1}}{0}{2} \leq \apoin \snormsobi{e_h^{n-1}}{1}{2}$ and
defining the functions $\be_{21}$ and $\be_{22}$ by
\begin{equation}\label{eq:funcbe}
	\be_{21}(\xi) = \frac{\be_{11}}{\nu} (\apoin^2 + \xi^2), ~
	\be_{22}(\xi) = \be_{12} (1+\xi^2), 
\end{equation}
we have the conclusion.
\qed
\end{proof}
% % %
% % %
\subsection{Definitions of constants $\cprop$, $\czero$ and $\htheo$}
We first define constants $\beone$ and $\betwo$ by
\begin{equation*}
	\beone \equiv \be_{21}(\mynorm{u}_{C(L^\infty)}+1), ~
	\betwo \equiv \be_{22}(\mynorm{u}_{C(L^\infty)}+1).
\end{equation*}
We define two positive constants $\cprop$ and $\czero$ by
\begin{equation*}
\begin{split}
	\cprop \equiv &
	\big\{\nu^{-1}(1+\apoin^2)\exp(\beone T)\betwo\big\}^{1/2} \max
	\Bigl\{ \mynorm{u}_{Z^2}, 
	\Bigl( 
	\mixn{(u,p)}_{H^1(H^{3}\times H^2)}^2 \\ & 
	+ T\left(\mixn{(u,p)}_{C(H^3\times H^2)}^2 +\abs{u}_{C(H^2)}^2\right) 
	+\nu \astokes^2 \absi{p^0}_2^2 
	\Bigr)^{1/2} \Bigr\}
\end{split}
\end{equation*}
and
\begin{align}\label{c0}
	\czero \equiv 
		\donemodify{
		\frac{1}{4}\sqrt{\frac{1}{\cuint \ainvone \cprop}}
		}.
\end{align}
Let a positive constant $\htwo$ be small enough to satisfy that 
\begin{subequations} \label{eq:hrestrictWhole}
\begin{align}
	\ainvzero \htwo^{1-d/6} \left( \cprop \htwo + \astokes \htwo \mixn{(u,p)}_{C(H^3\times H^2)} + \ainteh \abs{u}_{C(H^2)} \right) & \notag \\ +  \ainvzero \cprop \czero \htwo^{d/12} &\leq 1, 
	\label{eq:hrestrictA}\\
	\czero \Bigl\{
	\ainvone \htwo^{1-d/4} \left( \cprop \htwo + \astokes \htwo \mixn{(u,p)}_{C(H^3\times H^2)} + \ainteh \abs{u}_{C(H^2)} \right) & \notag \\ + \aintone \htwo^{d/4} \abs{u}_{C(W^{1,\infty})} 
	\Bigr\}
	&\leq 
		\donemodify{\frac{3}{16\cuint}}.
	\label{eq:hrestrictB}
\end{align}
\end{subequations}
which are possible since all the powers of $h_0$ are positive.  
\subsection{Induction}\label{subsec:p2p1proofend}
For $n=0,\dots, N_T$ we define the property P$(n)$ by 
\begin{description}
\item[(a)]
	$ \displaystyle
	\nu\abs{e_h^n}_1^2 + \mynorm{\bwd{e_h}}_{\ell^2(1,n;L^2)}^2 
	\leq  \exp(\beone n\Delta t)
	\betwo \Bigl\{\Delta t^2 \mynorm{u}_{Z^2(t^{0},t^n)}^2 \\ + h^4(  \mixn{(u,p)}_{H^1(t^{0},t^n;H^{3}\times H^2)}^2 
	+ \mixn{(u,p)}_{\ell^2(0,n-1;H^3\times H^2)}^2
	+ \abs{u}_{\ell^2(0,n-1;H^2)}^2 ) \\
	+ \nu \abs{e_h^0}_1^2 \Bigr\}.
	$
\item[(b)]
	$\mynorm{u_h^n}_{0,\infty} \leq \mynorm{u}_{C(L^\infty)}+1$. 
\item[(c)]
	$\Delta t\absi{\lagk{1} u_h^{n}}_{1,\infty} \leq \done$.
\end{description}
% % %
\renewcommand{\proofname}{Proof of Theorem \ref{theo:mainConvergence}}
\begin{proof}
We first prove that P($n$) holds for $n=0,\dots,N_T$ by induction.  
When $n=0$, the property P(0)-(a) obviously holds with the equality.
The properties P(0)-(b) and (c) are proved in similar ways to and easier than P($n$)-(b) and (c) below, we omit the proofs.  

Let $n \in \{1,\cdots,N_T \}$ be any integer.   
Supposing that P($k$), $k=1,\dots,n-1$, holds true, we prove that P($n$) holds.
We now apply Lemma \ref{lemm:mainprop}.  
The condition \eqref{uhn-1} is satisfied trivially when $n \ge 2$.  
When $n=1$, from the choice of $u_h^0$, (\ref{eq:stokesproj}) and Remark \ref{divu00} we have 
\begin{equation}\label{eq:b0vanish}
	b(e_h^0, q_h)=b(u_h^0,q_h)-b(\pro{u}^0,q_h)=0-0=0,  \quad \forall q_h \in Q_h. 
\end{equation}
We consider the condition \eqref{eq:errorrCond}.  
The former condition follows from
$\Delta t \leq \czero h^{d/4}$ and (\ref{eq:hrestrictB}) by the inequality
\begin{equation*}
	\Delta t\absi{u^{n-1}}_{1,\infty} \leq \czero \htheo ^{d/4} \absi{u}_{C(W^{1,\infty})} 
	\leq \donemodify{\frac{3}{16\cuint}} 
	\leq \done,
\end{equation*}
and the latter condition  
$\Delta t\absi{\lagk{1} u_h^{n-1}}_{1,\infty} \leq \done$ 
follows from P($n-1$)-(c).
Hence, there exists a solution $(u_h^{n},p_h^{n})$ at the step $n$.

We begin the proof of P($n$)-(a).
By putting 
\begin{align*}
	x_n \equiv & \nu \abs{e_h^n}_1^2, \quad y_n \equiv \mynorm{\bwd e_h^n}_0^2, \\
	b_n \equiv & \Delta t \mynorm{u}_{Z^2(t^{n-1},t^n)}^2 \\ & + h^4 \left( \tfrac{1}{\Delta t} \mixn{(u,p)}_{H^1(t^{n-1}, t^n; H^{3}\times H^2)}^2 
	+ \mixn{(u,p)^{n-1}}_{H^3\times H^2}^2
	+ \abs{u^{n-1}}_{2}^2 \right),
\end{align*}
P($n$)-(a) is rewritten as
\begin{equation}\label{eq:rewritten}
	x_n+\Delta t \sum_{i=1}^n y_i 
	\leq \exp(\beone n \Delta t) \betwo \biggl( x_0 + \Delta t \sum_{i=1}^n b_i \biggr).
\end{equation}
On the other hand, Lemma \ref{lemm:mainprop} implies that
\begin{equation*}
	x_n + \Delta t y_n \leq (1+\beone \Delta t) x_{n-1} + \betwo\Delta t b_n, 
\end{equation*}
where we have used the inequalities $\be_{2i}(\normsob{u_h^{n-1}}{0}{\infty}) \leq \be_{2i}^*$, $i=1,2$, obtained from P($n-1$)-(b). 
Using the inequalities $1\leq 1+x \leq \exp(x)$ for $x\geq 0$ and P($n-1$)-(a) rewritten by \eqref{eq:rewritten}, we have 
\begin{equation*}
\begin{split}
	&x_n+\Delta t \sum_{i=1}^n y_i = x_n + \Delta t y_n + \Delta t \sum_{i=1}^{n-1} y_i \\
	\leq & (1+\beone \Delta t) x_{n-1} + \betwo \Delta t b_n + \Delta t\sum_{i=1}^{n-1} y_i \\
	\leq & (1+\beone \Delta t) \exp(\beone (n-1)\Delta t) \betwo \biggl( x_0 + \Delta t \sum_{i=1}^{n-1} b_i \biggr) + \betwo \Delta t b_n \\
	\leq & \exp(\beone n \Delta t) \betwo \biggl( x_0 + \Delta t \sum_{i=1}^n b_i \biggr),
\end{split}
\end{equation*}
which is nothing but P($n$)-(a).  

Since $u_h^0$ is the first component of $\Pi_h^S(u^0,0)$, we have 
\begin{equation*}
	e_h^0=u_h^0-\pro u^0=(\Pi_h^S(0,-p^0))_1=((\Pi_h^S-I)(0,-p^0))_1,
\end{equation*}
which implies 
$\abs{e_h^0}_1 \leq \astokes h^2 \snormsob{p^0}{2}{2}$.  
From P(0)-(a) and the definition of $\cprop$, we have
\begin{equation}\label{eq:cprop}
	\mynorm{e_h^n}_1 \leq \cprop (h^2+\Delta t).
\end{equation}

P($n$)-(b) is proved as follows:  
\begin{align}
	&\mynorm{u_h^n}_{0,\infty} \notag \\
	\leq & \normi{u_h^n-\lagk 1 u^n}_{0,\infty} + \normi{\lagk 1 u^n}_{0,\infty} 
	\notag \\ 
	\leq & \ainvzero h^{-d/6} \normi{u_h^n -\lagk 1 u^n }_{1} +  \mynorm{u^n}_{0,\infty} 
	\tag{by Lemmas \ref{lemm:inv} and \ref{lemm:intest}}\\ 
	\leq & \ainvzero h^{-d/6} (\mynorm{u_h^n-\pro{u}^n}_{1} +  \mynorm{\pro{u}^n - u^n }_{1} + \normi{u^n -\lagk 1 u^n }_{1}) +  \mynorm{u^n}_{0,\infty} \notag \\
	\leq & \ainvzero h^{-d/6} \left(\cprop(h^2+\Delta t) + \astokes h^2 \mixn{(u,p)^n}_{H^3\times H^2} + \ainteh h \snormsob{u^n}{2}{2}\right) +  \mynorm{u^n}_{0,\infty}
	\tag{by (\ref{eq:cprop}), Lemma \ref{lemm:stokespro}-(i) and Lemma \ref{lemm:intest}}\\
	\leq &  \ainvzero h^{1-d/6} \left( \cprop h + \astokes h \mixn{(u,p)^n}_{H^3\times H^2} + \ainteh \abs{u^n}_2 \right)+  \ainvzero \cprop \czero h^{d/12} + \mynorm{u^n}_{0,\infty} 
	\tag{since $\Delta t \leq \czero h^{d/4}$}\\
	\leq & 1+\mynorm{u}_{C(L^\infty)}. 
	\tag{since $h\leq \htheo$ and by (\ref{eq:hrestrictA})}
\end{align}
We prove P($n$)-(c). 
We can estimate $\abs{u_h^n}_{1,\infty} \Delta t$ as follows: 
\begin{align}
	&\abs{u_h^n}_{1,\infty} \Delta t \notag \\
	\leq & (\absi{u_h^n -\lagk 1 u^n }_{1,\infty} + \absi{\lagk 1 u^n}_{1,\infty})\Delta t 
	\notag\\ 
	\leq & \{\ainvone h^{-d/2} (\absi{u_h^n -\lagk 1 u^n }_{1}) + \aintone \abs{u^n}_{1,\infty}\}\Delta t 
	\tag{by Lemmas \ref{lemm:inv} and \ref{lemm:intest}}\\ 
	\leq & \{\ainvone h^{-d/2} (\abs{u_h^n-\pro{u}^n}_{1} +  \abs{\pro{u}^n - u^n }_{1} + \absi{u^n -\lagk 1 u^n }_{1}) + \aintone \abs{u^n}_{1,\infty}\}\Delta t 
	\notag\\
	\leq & \Bigl\{\ainvone h^{-d/2} \left(\cprop(h^2+\Delta t) + \astokes h^2 \mixn{(u,p)^n}_{H^3\times H^2} + \ainteh h \snormsob{u^n}{2}{2}\right) \notag \\ &+ \aintone \abs{u^n}_{1,\infty} \Bigr\}\Delta t 
	\tag{by (\ref{eq:cprop}), Lemma \ref{lemm:stokespro}-(i) and Lemma \ref{lemm:intest}}\\
	\leq &  \czero\left\{\ainvone h^{1-d/4} \left( \cprop h + \astokes h \mixn{(u,p)^n}_{H^3\times H^2} + \ainteh \abs{u^n}_2 \right)+ \aintone h^{d/4} \abs{u^n}_{1,\infty} \right\} \notag \\ &+ \ainvone \cprop \czero^2
	\tag{since $\Delta t \leq \czero h^{d/4}$}\\
	\leq & \donemodify{\frac{3}{16\cuint} + \frac{1}{16\cuint}= \frac{1}{4\cuint}.}
	\tag{since $h\leq \htheo$, and by (\ref{eq:hrestrictB}) and the definition of $\czero$}
\end{align}
From this estimate and the definition of $\cuint$, we have $\Delta t\absi{\lagk{1} u_h^{n}}_{1,\infty} \leq \done$.

Thus, we have proved that P($n$) holds for $n=0,\cdots,N_T$. 

From P($n$)-(a), $n=0,\dots,N_T$, we obtain 
\begin{equation*}
	\mynorm{e_h}_{\ell^\infty(H^1)}, \mynorm{\bwd e_h}_{\ell(1,N_T; L^2)} \leq \cprop(h^2+\Delta t).
\end{equation*}
Using the triangle inequality $\mynorm{u_h-u}_{\ell^\infty(H^1)} \leq \mynorm{e_h}_{\ell^\infty(H^1)} + \mynorm{\eta}_{\ell^\infty(H^1)}$, we get  
\begin{equation*}
\mynorm{u_h-u}_{\ell^\infty(H^1)} \leq \ctheoone(h^2+\Delta t).
\end{equation*}

We now prove the estimate on the pressure.  
We can evaluate $\eps_h^n$ as follows: 
\begin{align}
	\normsob{\eps_h^n}{0}{2} 
	\leq & \frac{1}{\beinfsup}\sup_{v_h\in V_h}\frac{b(v_h,\eps_h^n )}{\mynorm{v_h}_1} \tag{by Lemma \ref{lemm:infsup}}\\
	= & \frac{1}{\beinfsup}\sup_{v_h\in V_h}\frac{1}{\mynorm{v_h}_1}\biggl(\sum_{i=1}^4(R_{i}^n,v_h)-(\bwd e_h^n, v_h)-a(e_h^n,v_h)\biggr) \tag{by (\ref{eq:errorMain})}\\
	\leq & \frac{1}{\beinfsup}\biggl( \sum_{i=1}^4 \mynorm{R_{i}^n}_0 + \mynorm{\bwd e_h^n}_0 + \nu \abs{e_h^n}_1 \biggr) \notag \\
	\leq & c \biggl( \mynorm{\bwd e_h^n}_0 + \nu \abs{e_h^n}_1 + \normsobi{e_h^{n-1}}{1}{2} +\sqrt{\Delta t} \mynorm{u}_{Z^2(t^{n-1},t^n)} + h^2 \mixn{(u,p)^{n-1}}_{H^3\times H^2} \notag\\
	&+\frac{h^2}{\sqrt{\Delta t}}   \mixn{(u,p)}_{H^1(t^{n-1},t^n;H^{3}\times H^2)} + h^2 \snormsob{u^{n-1}}{2}{2} \biggr) \tag{by Lemma \ref{lemm:errorr} and P($n-1$)-(b)}, 
\end{align}
which implies that, from \eqref{eq:cprop}, 
\begin{equation*}
	\mynorm{\eps_h}_{\ell^2(L^2)} \leq c(\mynorm{\bwd e_h}_{\ell^2(L^2)} + h^2+\Delta t) \leq c(h^2+\Delta t),
\end{equation*}
where $c$ is a positive constant independent of $h$ and $\Delta t$.
Using the triangle inequality 
\begin{equation*}
	\mynorm{p_h-p}_{\ell^2(L^2)} \leq \mynorm{\eps_h}_{\ell^2(L^2)} + \mynorm{p-\pro{p}}_{\ell^2(L^2)},
\end{equation*}
we obtain $\mynorm{p_h-p}_{\ell^2(L^2)} \leq \ctheoone(h^2+\Delta t)$.
\qed
\end{proof}
\subsection{Proof of Theorem \ref{theo:mini}}\label{subsec:miniproof}
In this subsection we prove the result on the $\mini$-element.
At first we replace the estimates of $R_{2}^n$ and $R_{3}^n$ in Lemma \ref{lemm:errorr}.  
\begin{lemmMini}
Suppose Hypotheses \hypoMiniNum \ and \ref{hypo:inverse}. %\ref{hypo:NSreg}. 
Under the condition \eqref{eq:errorrCond}
it holds that 
\begin{align*}
	\mynorm{R_{2}^n}_0 &\leq 
	\be_2 \mynorm{e_h^{n-1}}_0 + \be_3 (h\mixn{(u,p)^{n-1}}_{H^2\times H^1} + h^2\snormsob{u^{n-1}}{2}{2}),\\
	\mynorm{R_{3}^n}_0 &\leq \be_4 \frac{h}{\sqrt{\Delta t}} \left(  \mixn{(u,p)}_{H^1(t^{n-1},t^n;H^{2}\times H^1)} + \normsob{u_h^{n-1}}{0}{\infty} \mixn{(u,p)}_{L^2(t^{n-1},t^n;H^{2}\times H^1)} \right)
\end{align*}
for $n=1,\dots ,N_T$.  
\end{lemmMini}
The proof is similar to Lemma \ref{lemm:errorr} by replacing the order $k=2$ by $k=1$ in Lemma \ref{lemm:stokespro}-(i). 
\renewcommand{\proofname}{Proof of Theorem \ref{theo:mini}}
\begin{proof}
We only show the outline of the proof for the existence of $(u_h, p_h)$ and the inequality (\ref{eq:miniEst1}) since the proof is similar to that of Theorem \ref{theo:mainConvergence}.
We replace the definition of $\cprop$ by
\begin{equation*}
\begin{split}
	\cprop \equiv &
	\big\{\nu^{-1}(1+\apoin^2)\exp(\beone T)\betwo \big\}^{1/2} \max\Big\{ \mynorm{u}_{Z^2}, 
	\Big( \mixn{(u,p)}_{H^1(H^{2}\times H^1)}^2 \\& 
	+ T \left( \mixn{(u,p)}_{C(H^2\times H^1)}^2  +\abs{u}_{C(H^2)}^2 \right) 
	+\nu \astokes^2 \absi{p^0}_1^2 
	\Big)^{1/2}\Big\}, 
\end{split}
\end{equation*}
redefine $\czero$ by \eqref{c0} with the new $\cprop$, 
and replace the condition (\ref{eq:hrestrictWhole}) on $\htheo$ by
\begin{subequations} \label{eq:hrestrictMiniWhole}
\begin{align}
	&\ainvzero \htwo^{1-d/6} \Bigl( \cprop + \astokes \mixn{(u,p)}_{C(H^2\times H^1)} + \ainteh \abs{u}_{C(H^2)} \Bigr)  +  \ainvzero \cprop \czero \htheo^{d/12} \leq 1, \\
	&\czero \Bigl\{
	\ainvone \htheo^{1-d/4} \Bigl( \cprop + \astokes \mixn{(u,p)}_{C(H^2\times H^1)} + \ainteh \abs{u}_{C(H^2)} \Bigr)   \notag \\ & \quad + \aintone \htheo^{d/4} \abs{u}_{C(W^{1,\infty})} 
	\Bigr\} 
	\leq 
	\donemodify{\frac{3}{16\cuint}}. 
\end{align}
\end{subequations}
We also replace P($n$)-(a) by 
\begin{equation*}
\begin{split}
	&\nu\abs{e_h^n}_1^2 + \mynorm{\bwd{e_h}}_{\ell^2(1,n;L^2)}^2 
	\leq  \exp(\beone n\Delta t)
	\betwo \Bigl\{\Delta t^2 \mynorm{u}_{Z^2(t^{0},t^n)}^2 \\ & + h^2
	\left(  \mixn{(u,p)}_{H^1(t^{0},t^n;H^{2}\times H^1)}^2  
	+ \mixn{(u,p)}_{\ell^2(0,n-1;H^2\times H^1)}^2
	+ \abs{u}_{\ell^2(0,n-1;H^2)}^2 
	\right)
	+ \nu \abs{e_h^0}_1^2 \Bigr\}.
\end{split}
\end{equation*}
P($n$)-(a) implies the estimate 
\begin{align}\label{h1ofehn}
  \normsob{e_h^n}{1}{2}\leq \cprop(h+\Delta t). 
\end{align}
The choice (\ref{eq:hrestrictMiniWhole}) is sufficient to derive P($n$)-(b) and (c).
Hence, the existence of the solution and the estimate (\ref{eq:miniEst1}) are obtained similarly.  

We now prove the estimate (\ref{eq:miniEst2}), following \cite{Suli} except the introduction of $X_1(\Pi_h^{(1)}u_h^{n-1})$.  
Substituting $(v_h,q_h)=(e_h^n,\eps_h^n)$ in (\ref{eq:errorWhole}), we have
\begin{equation}\label{eq:miniEstErrorEq}
	\frac{1}{2} \bwd \normsob{e_h^n}{0}{2}^2 + \frac{1}{2\Delta t} \normsob{e_h^n-e_h^{n-1}}{0}{2}^2+\nu \snormsob{e_h^n}{1}{2}^2
	= \sum_{i=1}^4(R_i^n,e_h^n),
\end{equation}
where $R_i$, $i=1,\cdots,4$, are defined in (\ref{eq:defR}).
The term $(R_1^n,e_h^n)$ is evaluated by \eqref{eq:lemmR1}.  
From Lemma \ref{lemm:stokespro}-(ii) we have 
\[
  \normsob{\pro u^{n-1} - u^{n-1}}{0}{2}\leq \aon h^2 \mixn{(u,p)^{n-1}}_{H^2\times H^1}. 
\]
Using this estimate in the last line in \eqref{eq:interpoleR3est}, 
we have 
\begin{equation*}
	(R_2^n, e_h^n) \leq \normsob{R_{2}^n}{0}{2} \normsob{e_h^n}{0}{2} 
	\leq \{
	\be_2 \mynorm{e_h^{n-1}}_0 + \be_3' h^2 (\mixn{(u,p)^{n-1}}_{H^2\times H^1} + \snormsob{u^{n-1}}{2}{2})\}
	\normsob{e_h^n}{0}{2}.
\end{equation*}
We divide the term $(R_3^n,e_h^n)$ as follows: 
\begin{equation*}\label{eq:estR2}
\begin{split}
	(R_3^n,e_h^n)
	=&\frac{1}{\Delta t}(\eta^n-\eta^{n-1},e_h^n) 
	+ \frac{1}{\Delta t}(\eta^{n-1}-\eta^{n-1}\circ X_1(\lagk 1 u^{n-1}),e_h^n) \\
	&+ \frac{1}{\Delta t}(\eta^{n-1}\circ X_1(\lagk 1 u^{n-1})-\eta^{n-1}\circ X_1(\lagk 1 u_h^{n-1}),e_h^n)\\
	&\equiv I_1+I_2+I_3.
\end{split}
\end{equation*}
The first term $I_1$ is evaluated as
\begin{equation*}
	I_1 
	\leq \frac{1}{\sqrt{\Delta t}}\norm{\henbi{\eta}{t}}_{L^2(t^{n-1},t^n;L^2)}\normsob{e_h^n}{0}{2}
	\leq \frac{\aon h^2}{\sqrt{\Delta t}} \mixn{(u,p)}_{H^1(t^{n-1},t^n;H^2\times H^1)} \normsob{e_h^n}{0}{2}.
\end{equation*}
By Lemma \ref{lemm:DouglasRussell} the second term $I_2$ is evaluated as  
\begin{equation*}
\begin{split}
	I_2
	\leq & \adr\aintone\mynorm{u}_{C(W^{1,\infty})} \normsob{\eta^{n-1}}{0}{2} \normsob{e_h^n}{1}{2} \\
	\leq & \adr\aintone\mynorm{u}_{C(W^{1,\infty})} \aon h^2\mixn{(u,p)^{n-1}}_{H^2\times H^1} \normsob{e_h^n}{1}{2}.
\end{split}
\end{equation*}
In order to evaluate $I_3$ we prepare the estimate 
\[
	\ainvzero h^{-d/6} \snormsob{\eta^{n-1}}{1}{2} \le 
	\ainvzero h^{1-d/6} \astokes   \mixn{(u,p)^{n-1}}_{H^2\times H^1} \le 1, 
\]
where we have used Lemma \ref{lemm:stokespro}-(i) and (\ref{eq:hrestrictMiniWhole}a).  

Using Lemma \ref{lemm:twofunc} with $q=1$, $p=p'=2$, $w_1=\lagk 1 u^{n-1}$, $w_2=\lagk 1 u_h^{n-1}$ and $\psi=\eta^{n-1}$, 
Lemma \ref{lemm:inv}, the above estimate and (\ref{eq:interpoleR3est}), 
we can evaluate $I_3$ as follows: 
\begin{equation*}
\begin{split}
	I_3
	\leq & \frac{1}{\Delta t} \normsobi{\eta^{n-1}\circ X_1(\lagk 1 u^{n-1})-\eta^{n-1}\circ X_1(\lagk 1 u_h^{n-1})}{0}{1} \normsob{e_h^n}{0}{\infty} 
	\\
	\leq & \sqrt 2 \ainvzero h^{-d/6} \snormsob{\eta^{n-1}}{1}{2} \normsobi{\lagk 1 u^{n-1}-\lagk 1 u_h^{n-1}}{0}{2} \normsobi{e_h^n}{1}{2} 
	\\
	\leq & \sqrt 2  \normsobi{\lagk 1 u^{n-1}-\lagk 1 u_h^{n-1}}{0}{2} \normsobi{e_h^n}{1}{2} 
\\
	\leq & \sqrt 2 \aintfour(\normi{e_h^{n-1}}_0 + \aon h^2\mixn{(u,p)^{n-1}}_{H^2\times H^1} + \ainteh h^2\snormsob{u^{n-1}}{2}{2}) \normsob{e_h^n}{1}{2}.
\end{split}
\end{equation*}
In order to evaluate $(R_4^n,e_h^n)$ we prepare the estimate 
\[
	\ainvzero h^{-d/6} \snormsob{e_h^{n-1}}{1}{2} \le 
	\ainvzero h^{-d/6} \cprop (h+\Delta t) \le 
	\ainvzero \cprop h^{-d/6} (h+\czero h^{d/4}) \le 1, 
\]
where we have used \eqref{h1ofehn} and (\ref{eq:hrestrictMiniWhole}a).  
Using Lemma \ref{lemm:DouglasRussell}, the above inequality and a similar estimate to $I_3$ in $(R_3^n,e_h^n)$, we can evaluate $(R_4^n,e_h^n)$as follows: 
\begin{equation*}
\begin{split}
	(R_4^n,e_h^n)
	=&-\frac{1}{\Delta t}(e_h^{n-1}-e_h^{n-1}\circ X_1(\lagk 1 u^{n-1}),e_h^n) \\
	&- \frac{1}{\Delta t}(e_h^{n-1}\circ X_1(\lagk 1 u^{n-1})-e_h^{n-1}\circ X_1(\lagk 1 u_h^{n-1}),e_h^n) \\
	\leq & \adr\aintone\mynorm{u}_{C(W^{1,\infty})} \normsobi{e_h^{n-1}}{0}{2} \normsob{e_h^n}{1}{2} \\& + \sqrt 2 \aintfour(\normi{e_h^{n-1}}_0 + \aon h^2\mixn{(u,p)^{n-1}}_{H^2\times H^1} + \ainteh h^2\snormsob{u^{n-1}}{2}{2}) \normsob{e_h^n}{1}{2}.
\end{split}
\end{equation*}
Combining (\ref{eq:miniEstErrorEq}) with these estimates and using Young's inequality and Poincar\'e's inequality $\normsob{e_h^n}{0}{2} \leq \apoin \snormsob{e_h^n}{1}{2}$, we have, 
\begin{equation*}
\begin{split}
	&\frac{1}{2}\bwd \normsob{e_h^n}{0}{2}^2 + \nu \snormsob{e_h^n}{1}{2}^2 
	\leq \be_{31} \Bigl( \normsob{e_h^{n-1}}{0}{2} + \sqrt{\Delta t} \mynorm{u}_{Z^2(t^{n-1},t^n)} \\
	&+ \frac{h^2}{\sqrt{\Delta t}} \mixn{(u,p)}_{H^1(t^{n-1},t^n;H^2\times H^1)} + h^2 \mixn{(u,p)^{n-1}}_{H^2\times H^1} + h^2 \snormsob{u^{n-1}}{2}{2}
	\Bigr)\normsob{e_h^n}{1}{2} \\
	\leq & \nu \snormsob{e_h^n}{1}{2}^2 + \be_{32} \normsob{e_h^{n-1}}{0}{2}^2 + \be_{33} \Bigl\{ \Delta t \mynorm{u}_{Z^2(t^{n-1},t^n)}^2 + \frac{h^4}{\Delta t} \mixn{(u,p)}_{H^1(t^{n-1},t^n;H^2\times H^1)}  \\
	&+ h^4 \left(\mixn{(u,p)^{n-1}}_{H^2\times H^1}^2 +  \snormsob{u^{n-1}}{2}{2}^2 \right) \Bigr\},
\end{split}
\end{equation*}
where 
$\be_{31}$, $\be_{32}$ and $\be_{33}$ are positive constants independent of $h$ and $\Delta t$.  
Applying Gronwall's inequality, we obtain (\ref{eq:miniEst2}).
\qed
\end{proof}
\renewcommand{\proofname}{Proof}
% % % end of 50_proof

\section{Numerical results}\label{sec:numex}
We show numerical results in $d=2$ for the $\pk2/\pk1$-element.  
We compare the conventional \jurai with the present \zettai.
For the triangulation of the domain the FreeFem++ \cite{FreeFemCite} is used.  
In \jurai we employ numerical quadrature of seven-point formula of degree five \cite{HMS}. 
The relative error $E_X$ is defined by
\begin{equation*}
	E_X(\phi) \equiv \frac{\mynorm{\Pi_h \phi-\phi_h}_X}
	{\mynorm{\Pi_h\phi}_X},
\end{equation*}
for $\phi=u$ in $X=\ell^\infty(H_0^1)$ and $\ell^\infty(L^2)$, and for $\phi=p$ in $X=\ell^2(L^2)$.

\begin{exam}\label{exam:funcC}
	In (\ref{NS}), let 	$\Omega \equiv (0,1)^2$, $T=1$.   
	We consider the two cases, $\nu=10^{-2}$ and $10^{-4}$. %,$10^{-5}$.
	The functions $f$ and $u^0$ are defined so that the exact solution is 
	\begin{equation*}
		\begin{split}
			u_1(x,t)&=\phi(x_1,x_2,t),\\
			u_2(x,t)&= - \phi(x_2,x_1,t),\\
			p(x,t)&=\sin(\pi(x_1 + 2x_2) + 1 + t),
		\end{split}
	\end{equation*}
	where $\phi(a,b,t)\equiv -\sin(\pi a)^2 \sin(\pi b) \{\sin(\pi(a + t)) + 3\sin(\pi(a + 2b + t))\}$.
\end{exam}

Let $N$ be the division number of each side of $\Omega$.
We set $h\equiv 1/N$. 
Figure \ref{fig:irsq_renum} shows the triangulation of $\bar \Omega$ for $N=16$. %$N=16, 32$ and $64$.
The time increment $\Delta t$ is set to be $\Delta t=h^2$ ($N=16, 23, 32 ,45$ and $64$) or $\Delta t=h^3$ ($N=16, 19 ,23, 27$ and $32$) so that we can observe the convergence behavior of order $h^2$ or $h^3$. 
The purpose of the choice $\Delta t=O(h^2)$ or $O(h^3)$ is to examine the theoretical convergence order, but it is not based on the stability condition, which is much weaker as shown in Theorem \ref{theo:mainConvergence}.  
\begin{figure}
	\centering
	\includegraphics[width=35mm]{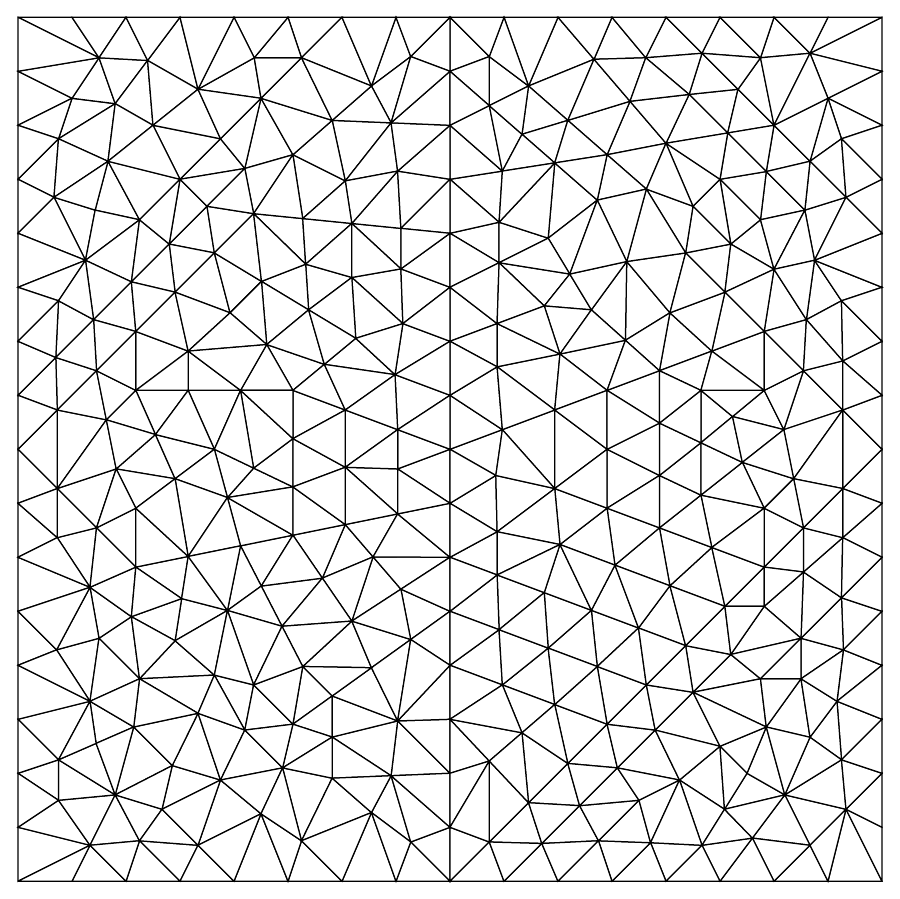}
	\caption{
		The triangulation of $\bar \Omega$ for $N=16$
	}
	\label{fig:irsq_renum}
\end{figure}
\begin{table}[h]
	\centering
	\caption{Symbols used in Figs. \ref{fig:funcCloglognu-2} and \ref{fig:funcCloglognu-4}, and Tables \ref{table:funcCnu-2} and \ref{table:funcCnu-4}
	}
	\begin{tabular}{ccccc}
		\hline
		$\phi$ & $u$ & $p$ & $u$ & $u$ \\
		$X$ & $\lih$ & $\ltl$ & $\lil$ & $\lil$ \\
		$\Delta t$ & $h^2$ & $h^2$ & $h^2$ & $h^3$ \\
		\hline
		\hline
		\jurai	 & \mone & \mtwo & \mthree & \mfour \rule{0cm}{\columnheight}\\
		\zettai	 & \mfive & \msix & \mseven & \meight \rule{0cm}{\columnheight}\\
		\hline
	\end{tabular}
	\label{table:symbol}
\end{table}

% % %
% % %begin nu = 10^-2 
% % %
\begin{figure}[h]
	\begin{overpic}[width=\figwidthbasic 
		]
		{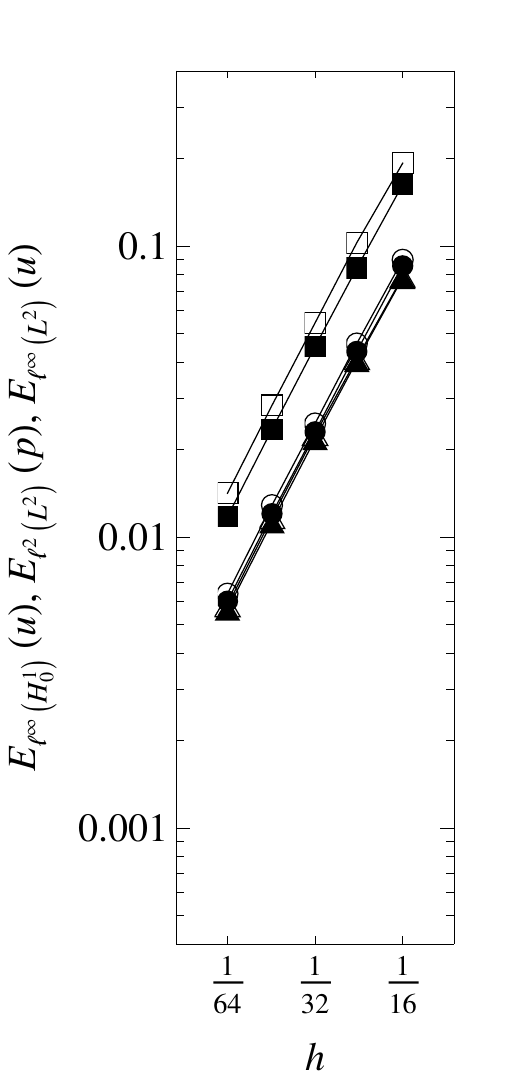}
		\put(30,15){\includegraphics[width=9mm]{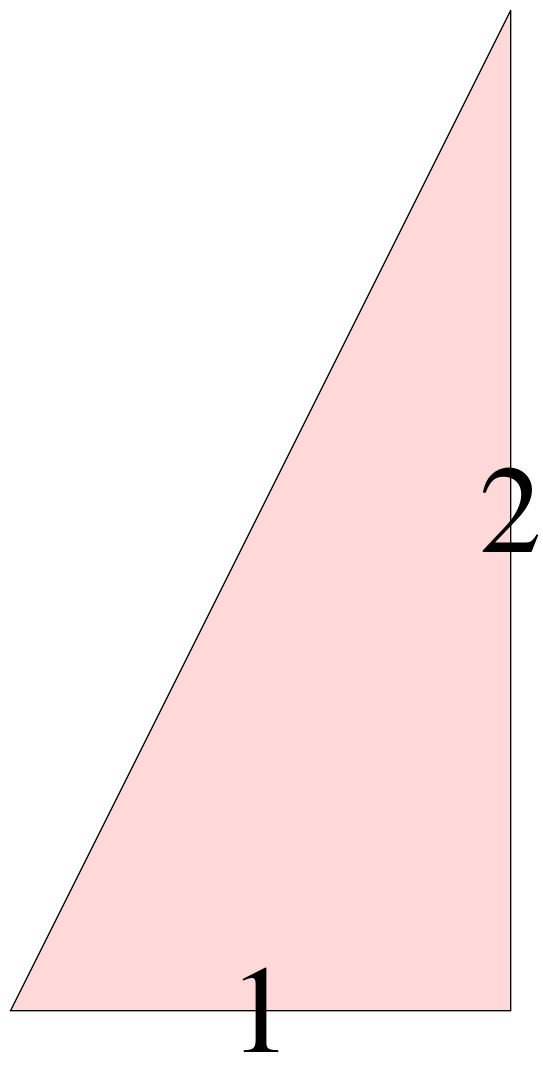}}
	\end{overpic} 
	\hspace{17mm}
	\begin{overpic}[width=\figwidthbasic 
		]
		{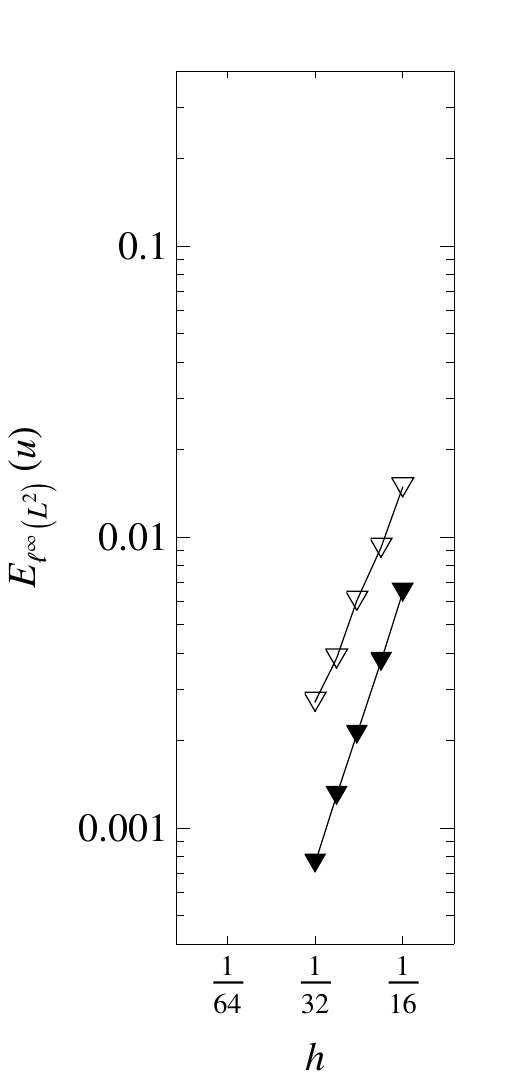}
		\put(30,15){\includegraphics[width=9mm]{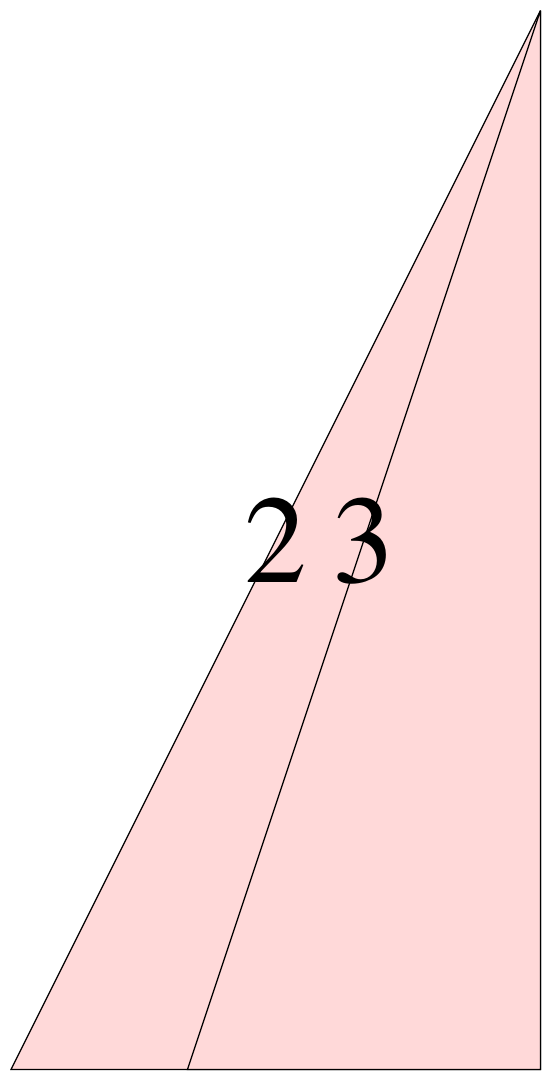}}
	\end{overpic}
	\caption{
	Relative errors $E_{\ell^\infty(H^1_0)}(u)$, $E_{\ell^2(L^2)}(p)$, $E_{\ell^\infty(L^2)}(u)$ with $\Delta t=h^2$ (left) and  $E_{\ell^\infty(L^2)}(u)$  with $\Delta t=h^3$ (right) in the case of $\nu=10^{-2}$ in Example \ref{exam:funcC}
	}
	\label{fig:funcCloglognu-2}
\end{figure}
\begin{table}[h]
	\caption{
	The values of relative errors and orders in Fig. \ref{fig:funcCloglognu-2} by \jurai (top) and \zettai (bottom)
	}
	\label{table:funcCnu-2}
	\begin{tabular}{lllllll}
		\hline
		$N$ & \mone & order & \mtwo & order & \mthree & order \rule{0cm}{\columnheight}\\
		\hline
		\hline
		16 & 8.55e-2 &  & 1.63e-1 &  & 7.77e-2 & \\
		23 & 4.34e-2 & 1.87  & 8.40e-2 & 1.82  & 4.03e-2 & 1.81 \\
		32 & 2.30e-2 & 1.93  & 4.52e-2 & 1.88  & 2.17e-2 & 1.87 \\
		45 & 1.20e-2 & 1.90  & 2.34e-2 & 1.92  & 1.13e-2 & 1.93 \\
		64 & 6.02e-3 & 1.97  & 1.18e-2 & 1.96  & 5.64e-3 & 1.96 \\
		
		\hline
		\\
		\hline
		$N$ & \mfive & order & \msix & order & \mseven & order \rule{0cm}{\columnheight}\\
		\hline
		\hline
		16 & 8.97e-2 &  & 1.93e-1 &  & 7.84e-2 & \\
		23 & 4.62e-2 & 1.83  & 1.03e-1 & 1.73  & 4.10e-2 & 1.78 \\
		32 & 2.46e-2 & 1.92  & 5.44e-2 & 1.92  & 2.25e-2 & 1.82 \\
		45 & 1.29e-2 & 1.90  & 2.84e-2 & 1.91  & 1.17e-2 & 1.93 \\
		64 & 6.39e-3 & 1.99  & 1.41e-2 & 1.97  & 5.81e-3 & 1.98 \\
		
		\hline
	\end{tabular}
	\begin{tabular}{lll}
		\hline
		$N$ & \mfour & order \rule{0cm}{\columnheight}\\
		\hline
		\hline
		16 & 6.45e-3 & \\
		19 & 3.73e-3 & 3.19 \\
		23 & 2.10e-3 & 3.02 \\
		27 & 1.29e-3 & 3.02 \\
		32 & 7.57e-4 & 3.15 \\
		
		\hline
		\\
		\hline
		$N$ & \meight & order \rule{0cm}{\columnheight}\\
		\hline
		\hline
		16 & 1.48e-2 & \\
		19 & 9.19e-3 & 2.78 \\
		23 & 6.04e-3 & 2.19 \\
		27 & 3.83e-3 & 2.85 \\
		32 & 2.72e-3 & 2.01 \\
		
		\hline
	\end{tabular}
\end{table}
% % %
% % %end nu = 10^-2 
% % %
% % %
% % %begin nu = 10^-4 
% % %
\begin{figure}[h]
	\begin{overpic}[width=\figwidthbasic 
		]
		{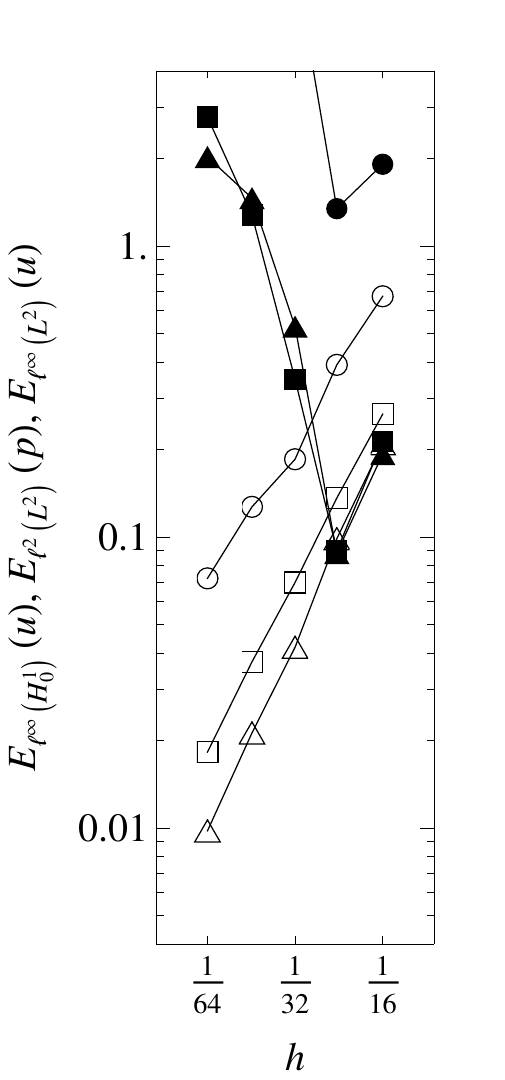}
		\put(27,15){\includegraphics[width=9mm]{tri2.pdf}}
	\end{overpic} 
	\hspace{17mm}
	\begin{overpic}[width=\figwidthbasic 
		]
		{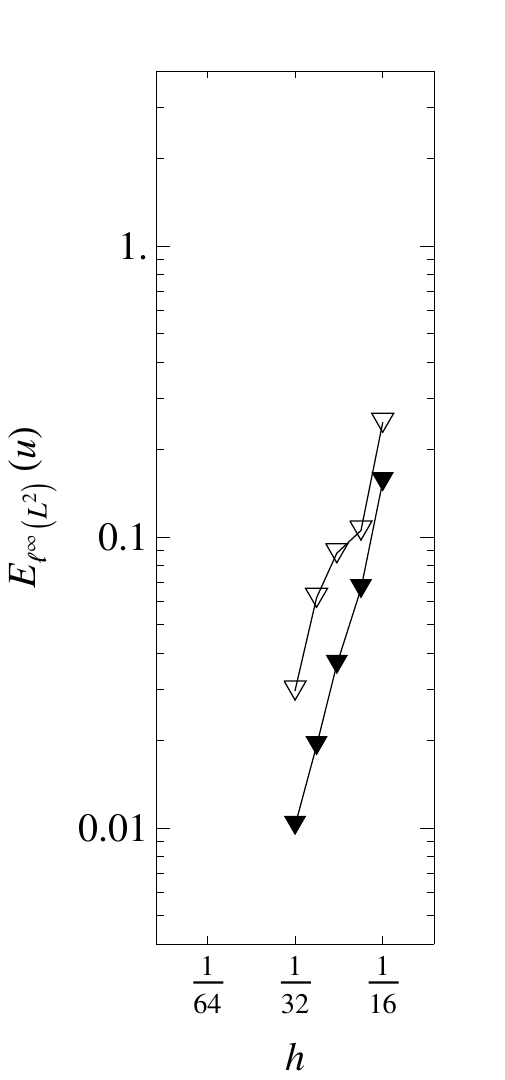}
		\put(27,15){\includegraphics[width=9mm]{trimid23.pdf}}
	\end{overpic}
	\caption{
	Relative errors $E_{\ell^\infty(H^1_0)}(u)$, $E_{\ell^2(L^2)}(p)$, $E_{\ell^\infty(L^2)}(u)$ with $\Delta t=h^2$ (left) and  $E_{\ell^\infty(L^2)}(u)$ with $\Delta t=h^3$ (right) in the case of $\nu=10^{-4}$ in Example \ref{exam:funcC}
	}
	\label{fig:funcCloglognu-4}
\end{figure}
\begin{table}[h]
	\caption{The values of errors and orders of the graphs in Fig. \ref{fig:funcCloglognu-4} with $\Delta t = h^2$ (top) and $\Delta t = h^3$ (bottom)}
	\label{table:funcCnu-4}
	\begin{tabular}{lllllll}
		\hline
		$N$ & \mone & order & \mtwo & order & \mthree & order \rule{0cm}{\columnheight}\\
		\hline
		\hline
		16 & 1.91e+0 &  & 2.14e-1 &  & 1.93e-1 & \\
		23 & 1.34e+0 & 0.97  & 8.97e-2 & 2.39  & 8.81e-2 & 2.16 \\
		32 & 9.42e+0 & -5.90  & 3.48e-1 & -4.11  & 5.28e-1 & -5.43 \\
		45 & 4.10e+1 & -4.31  & 1.28e+0 & -3.81  & 1.46e+0 & -2.98 \\
		64 & 8.82e+1 & -2.18  & 2.77e+0 & -2.20  & 2.02e+0 & -0.93 \\
		
		\hline
		\\
		\hline
		$N$ & \mfive & order & \msix & order & \mseven & order \rule{0cm}{\columnheight}\\
		\hline
		\hline
		16 & 6.72e-1 &  & 2.65e-1 &  & 2.09e-1 & \\
		23 & 3.91e-1 & 1.50  & 1.36e-1 & 1.83  & 9.88e-2 & 2.07 \\
		32 & 1.85e-1 & 2.26  & 6.98e-2 & 2.02  & 4.18e-2 & 2.60 \\
		45 & 1.27e-1 & 1.10  & 3.73e-2 & 1.84  & 2.12e-2 & 1.99 \\
		64 & 7.21e-2 & 1.61  & 1.83e-2 & 2.03  & 9.78e-3 & 2.20 \\
		
		\hline
	\end{tabular}
	\begin{tabular}{lll}
		\hline
		$N$ & \mfour & order \rule{0cm}{\columnheight}\\
		\hline
		\hline
		16 & 1.55e-1 & \\
		19 & 6.64e-2 & 4.92 \\
		23 & 3.65e-2 & 3.14 \\
		27 & 1.92e-2 & 4.01 \\
		32 & 1.02e-2 & 3.71 \\
		
		\hline
		\\
		\hline
		$N$ & \meight & order \rule{0cm}{\columnheight}\\
		\hline
		\hline
		16 & 2.47e-1 & \\
		19 & 1.05e-1 & 4.96 \\
		23 & 8.80e-2 & 0.94 \\
		27 & 6.18e-2 & 2.20 \\
		32 & 2.97e-2 & 3.29 \\
		
		\hline
	\end{tabular}
\end{table}
% % %
% % %end nu = 10^-4 
% % %

Table \ref{table:symbol} shows the symbols used in the graphs and tables. 
Since every graph of the relative error $E_X$ versus $h$ is depicted in the logarithmic scale, the slope corresponds to the convergence order.    
Figure \ref{fig:funcCloglognu-2} shows the graphs of $E_{\ell^\infty(H^1_0)}(u)$, $E_{\ell^2(L^2)}(p)$ and $E_{\ell^\infty(L^2)}(u)$ versus $h$ in the case of $\nu=10^{-2}$.  
Their values and convergence orders are listed in Table \ref{table:funcCnu-2}.
When $\Delta t=h^2$, the convergence orders of $\elih(u)$ (\mone, \mfive), $\eltl(p)$ (\mtwo, \msix) and $\elil(u)$  (\mthree, \mseven) are almost 2 in both schemes. 
When $\Delta t=h^3$, the order of $\elil(u)$ is almost 3 in \jurai \ (\mfour) and 2 in \zettai \ (\meight).  
They reflect the theoretical results.

We consider a higher Reynolds number case. 
	Figure \ref{fig:funcCloglognu-4} shows the graphs in the case of $\nu=10^{-4}$ and their values are listed in Table \ref{table:funcCnu-4}.
When $\Delta t=h^2$, all errors increase abnormally at $N=32, 45$ and $64$ in \jurai \ (\mone, \mtwo, \mthree) 
while the convergence is observed in \zettai \ (\mfive, \msix, \mseven) but the order of $\elih(u)$ (\mfive) is less than 2.
In order to obtain the theoretical convergence order $O(h^2)$ in \zettai, it seems that finer meshes will be necessary.
When $\Delta t=h^3$, the order of $\elil(u)$ is more than 3 in \jurai \ (\mfour) while it is less than 3 between $N=19$ and $23$, and $N=23$ and $27$ in \zettai \ (\meight).

\onefig{
	\includegraphics[height=45mm]{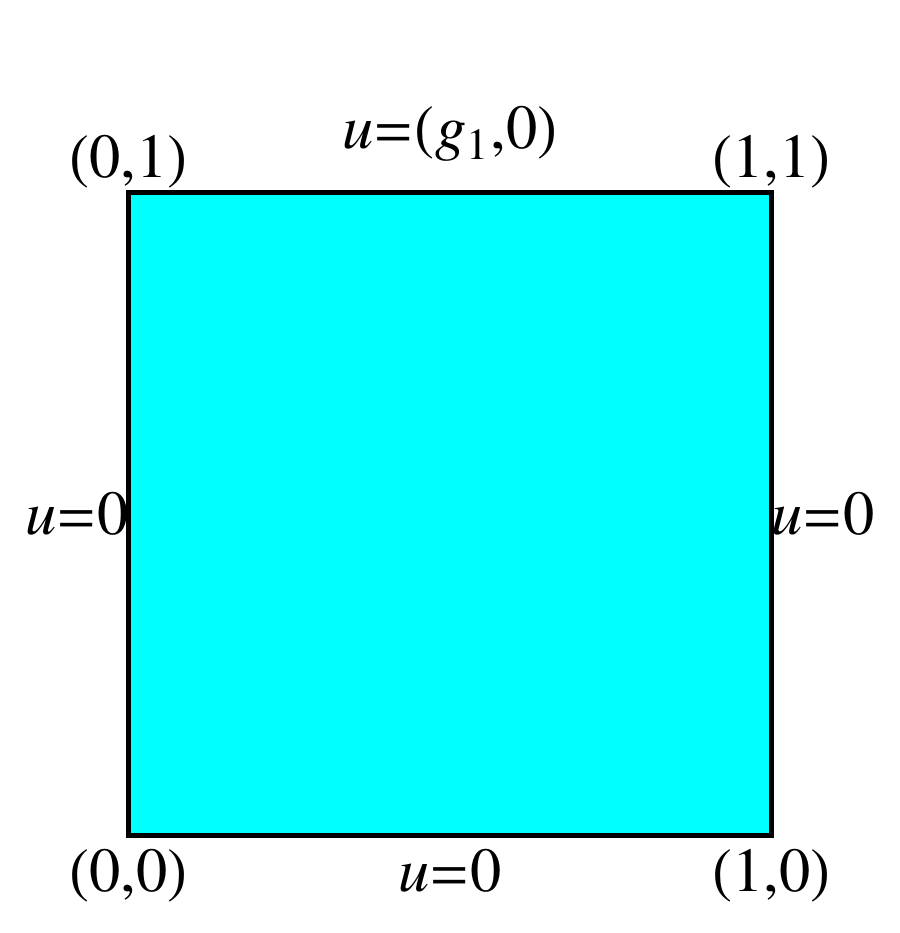}
	\quad
	\includegraphics[height=45mm]{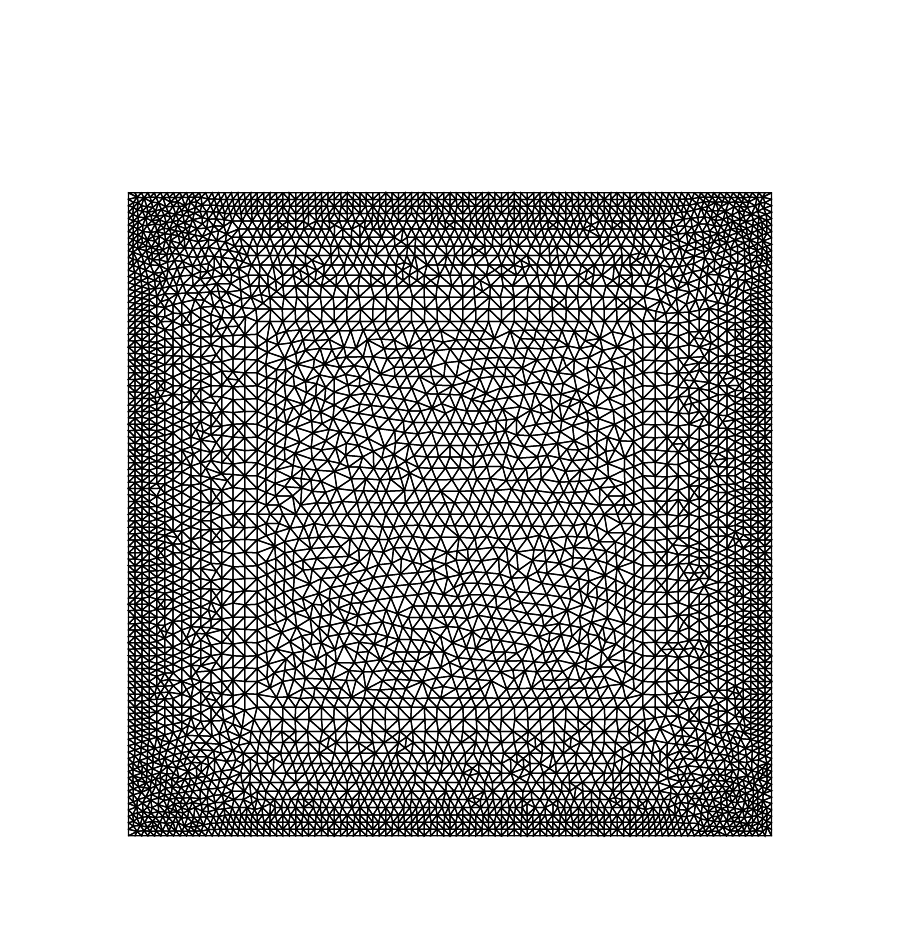}
	\caption{The domain $\Omega$ and the boundary condition (left) and the triangulation of $\Omega$ (right) in Example \ref{exam:cav2}}.
	\label{fig:cavityBoundaryAndMesh}
}

We now consider a cavity problem to see that \zettai \ is robust for high Reynolds number while \jurai \ is not.
This problem is not a homogeneous Dirichlet boundary problem, but it is often used as a benchmark problem.  
In order to assure the existence of the solution we deal with a regularized cavity problem, where the prescribed velocity is continuous on the boundary.   
\begin{exam}\label{exam:cav2}
	Let $\Omega \equiv (0,1)^2$, $f=0$, $u^0=0$.  We consider the two cases, $\nu=10^{-4}$ and $10^{-5}$. 
	The boundary condition is described in Fig. \ref{fig:cavityBoundaryAndMesh} (left), where $g_1=4x_1(1-x_1)$. 
\end{exam}

\onefig{
	\includegraphics[width=34mm]{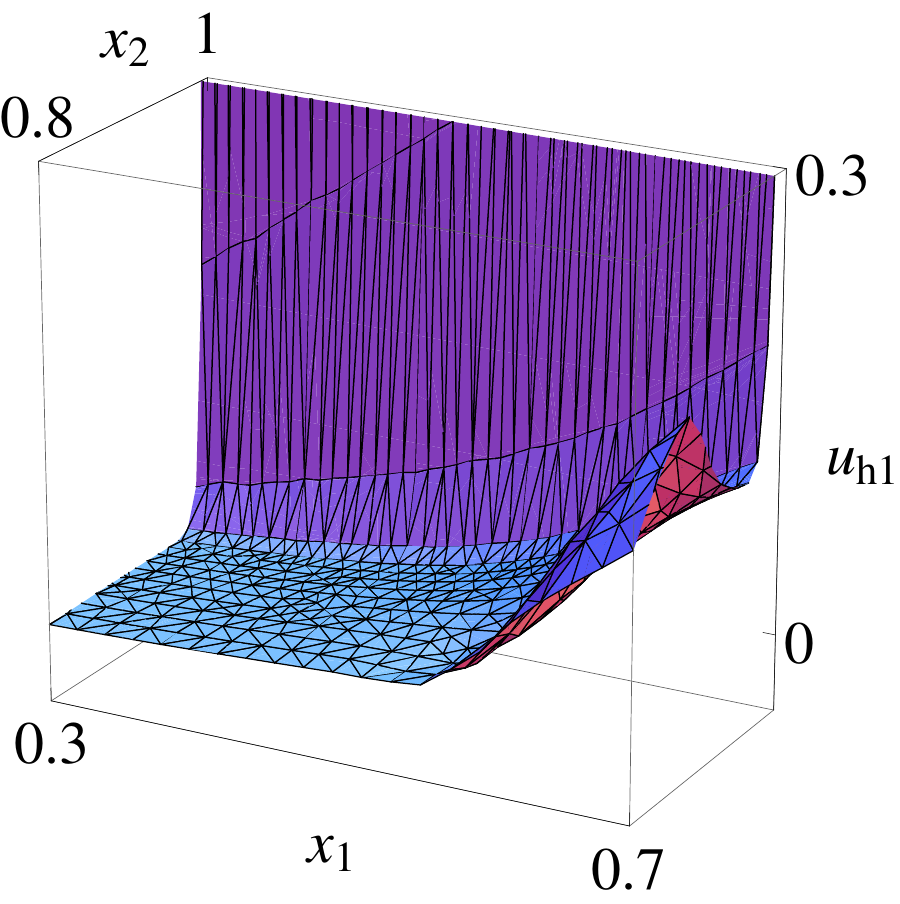} 
	\quad
	\includegraphics[width=34mm]{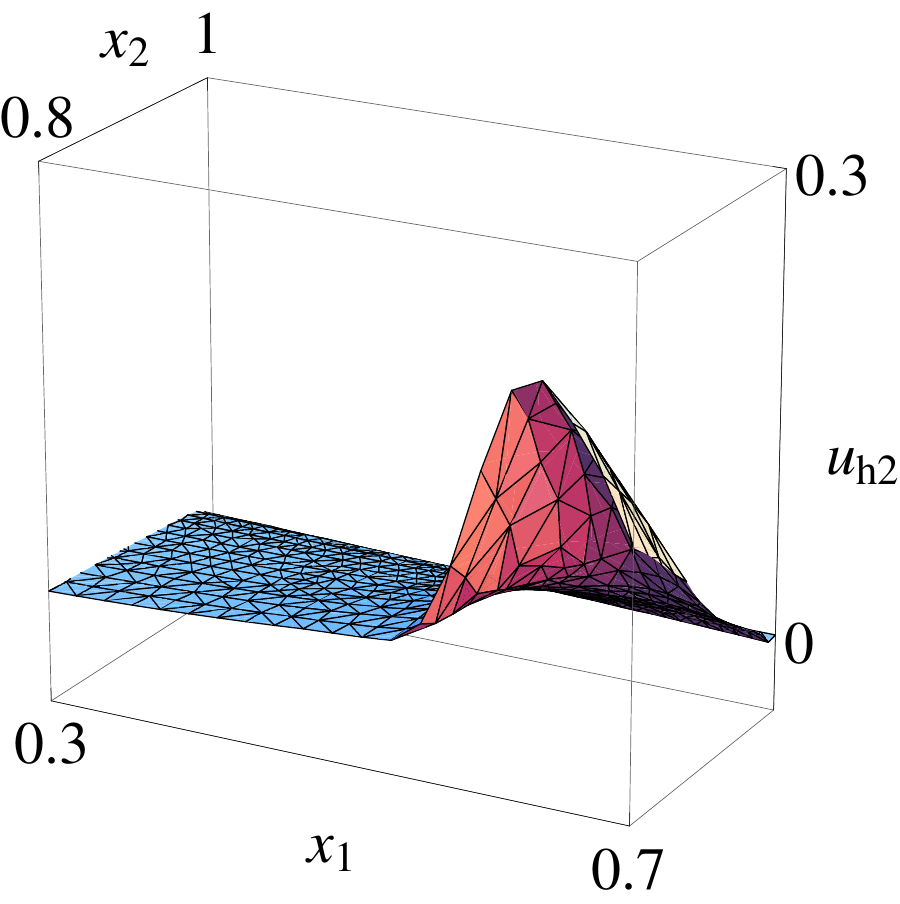} 
	\caption{
	Stereographs of $u_{h1}^n$ (left) and $u_{h2}^n$ (right) at $t^n=8$ by \jurai \ in Example \ref{exam:cav2} when $\nu=10^{-4}$ 
	}
	\label{fig:cavnu-4sch1}
}
\onefig{
	\includegraphics[width=34mm]{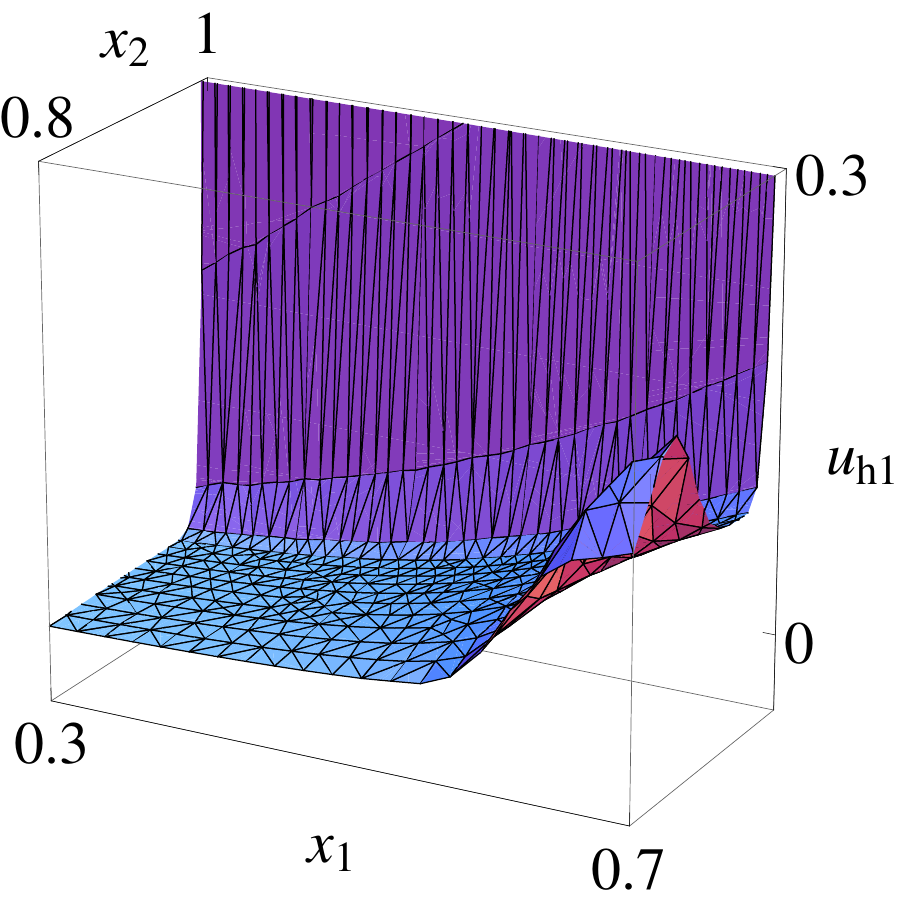} 
	\quad
	\includegraphics[width=34mm]{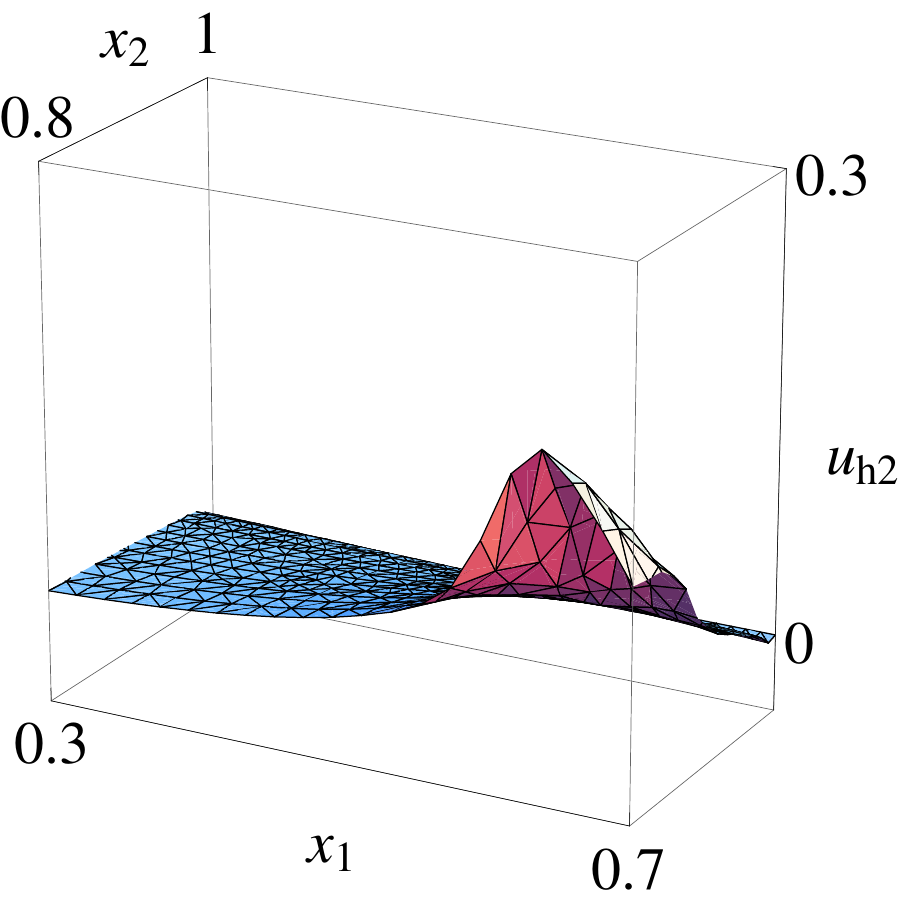} 
	\caption{
	Stereographs of $u_{h1}^n$ (left) and $u_{h2}^n$ (right) at $t^n=8$ by \zettai \ in Example \ref{exam:cav2} when $\nu=10^{-4}$ 
	}
	\label{fig:cavnu-4sch2}
}
\onefig{
	\includegraphics[width=34mm]{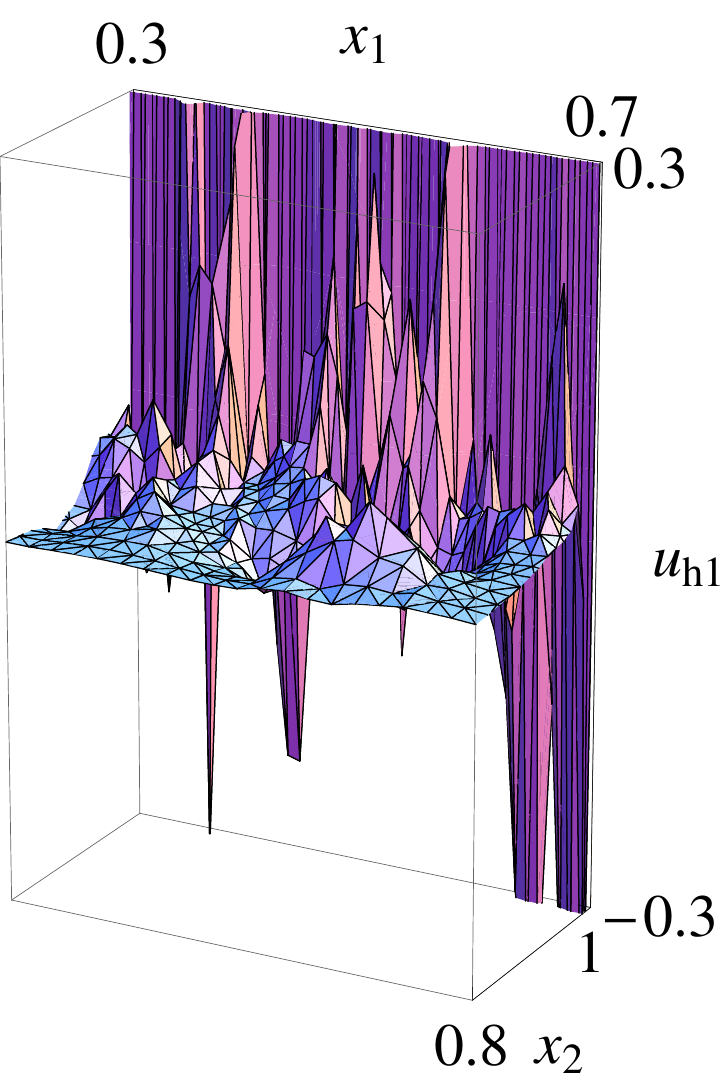} 
	\quad
	\includegraphics[width=34mm]{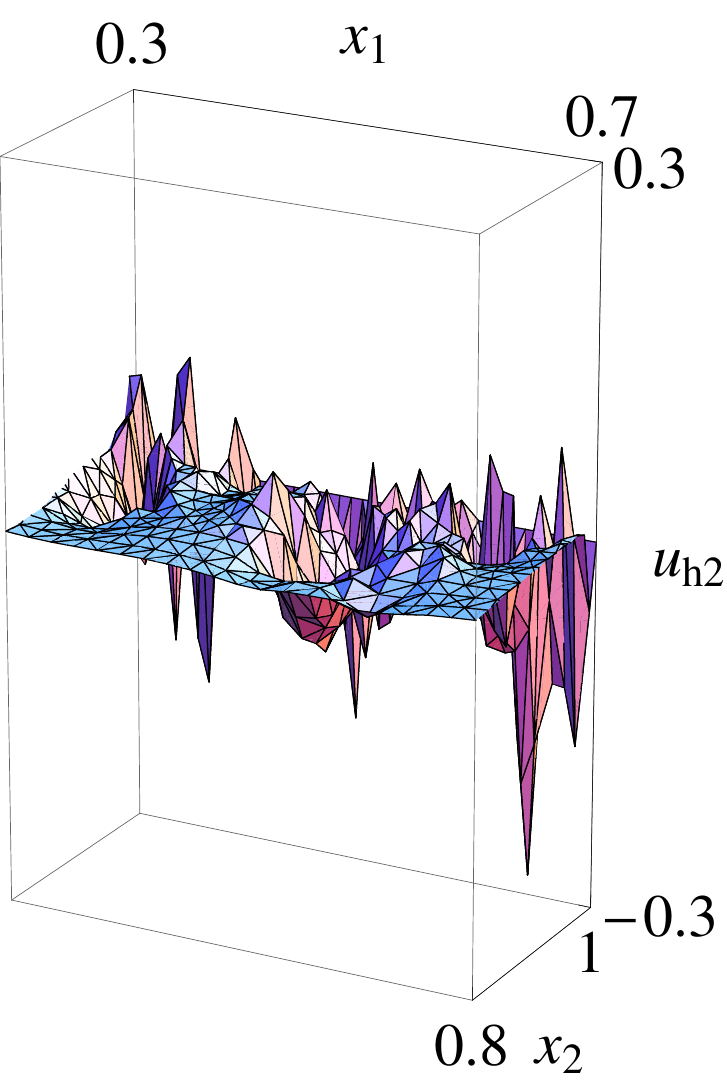} 
	\caption{
	Stereographs of $u_{h1}^n$ (left) and $u_{h2}^n$ (right) at $t^n=8$ by \jurai \ in Example \ref{exam:cav2} when $\nu=10^{-5}$ 
	}
	\label{fig:cavnu2-5sch1}
}
\onefig{
	\includegraphics[width=34mm]{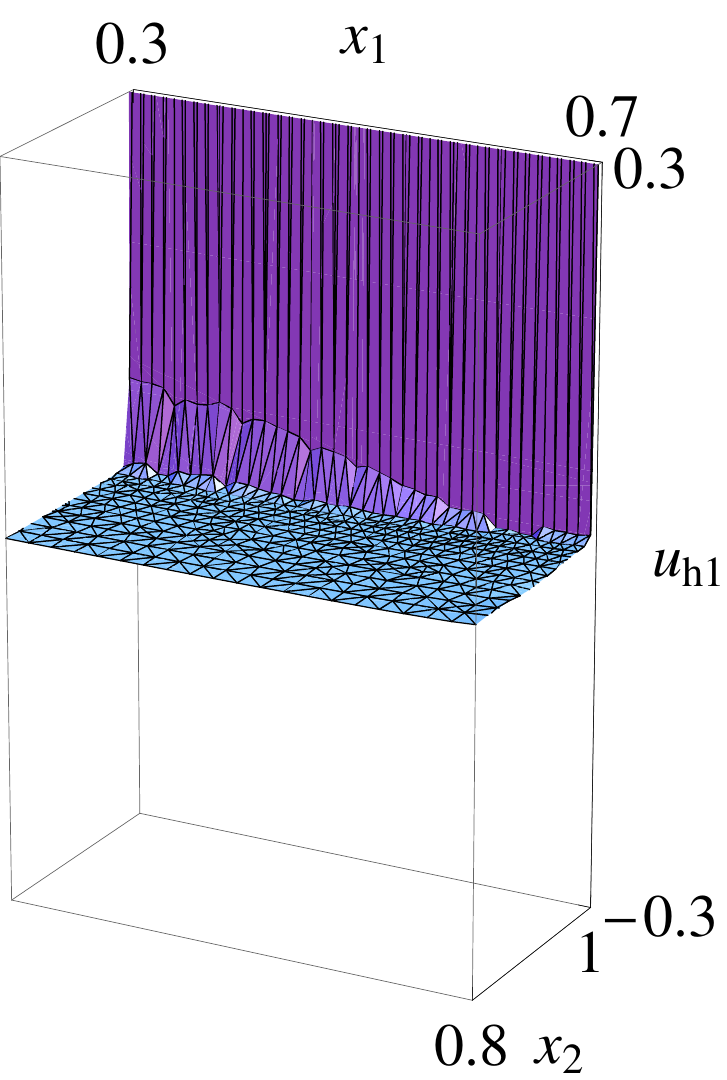} 
	\quad
	\includegraphics[width=34mm]{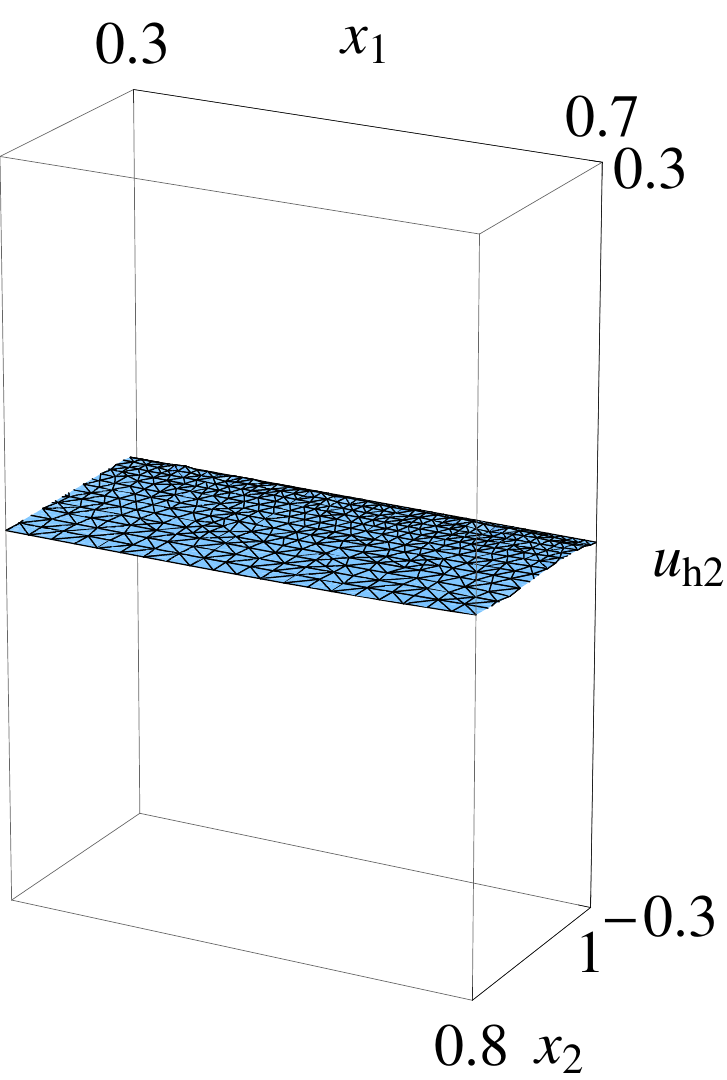} 
	\caption{
	Stereographs of $u_{h1}^n$ (left) and $u_{h2}^n$ (right) at $t^n=8$ by \zettai \ in Example \ref{exam:cav2} when $\nu=10^{-5}$ 
	}
	\label{fig:cavnu2-5sch2}
}

Figure \ref{fig:cavityBoundaryAndMesh} (right) shows the triangulation of $\Omega$.
Figures \ref{fig:cavnu-4sch1} and \ref{fig:cavnu-4sch2} show the stereographs of the solution $u_h^n$ at $t^n=8$ in the subdomain $(0.3,0.7)\times(0.8,1.0)$ by \jurai \ and \zettai, respectively, when $\nu=10^{-4}$. 
Neither solution is oscillating although $u_{h2}$ of \jurai \ takes larger values than that of \zettai.
Figures \ref{fig:cavnu2-5sch1} and \ref{fig:cavnu2-5sch2} show the stereographs of the solution $u_h^n$ at $t^n=8$ in the subdomain $(0.3,0.7)\times(0.8,1.0)$ by \jurai \ and \zettai, respectively, when $\nu=10^{-5}$.
While oscillation is observed for both components of the solution by {\jurai} in Figure \ref{fig:cavnu2-5sch1}, we can see that the solution by {\zettai} is solved without any oscillation in Figure \ref{fig:cavnu2-5sch2}.  
%
% % % end of 60_numex

%\section{Conclusion\addedsix{s}}\label{sec:conclusion}
\section{Conclusions}\label{sec:conclusion}
%We have presented a genuinely stable Lagrange--Galerkin scheme for Navier--Stokes equations.
%We have presented a Lagrange--Galerkin scheme for Navier--Stokes equations .
We have present a Lagrange--Galerkin scheme free from numerical quadrature for the Navier--Stokes equations.
%In the scheme locally linearized velocities are used and the integration is executed exactly without numerical quadrature.
By virtue of the introduction of a locally linearized velocity, the scheme can be implemented exactly and the theoretical stability and the convergence results are assured for practical numerical solutions.  
%We have shown error estimates of $O(\Delta t + h^2)$ in $\ell^\infty(H^1)\times \ell^2(L^2)$-norm for the velocity and pressure in the case of $\pk 2/\pk 1$-element, and of $O(\Delta t + h)$ in $\ell^\infty(H^1)\times \ell^2(L^2)$-norm for the velocity and pressure and $O(\Delta t + h^2)$ in $\ell^\infty(L^2)$-norm for the velocity in the case of $\mini$-finite element.
We have shown optimal error estimates in $\ell^\infty(H^1)\times \ell^2(L^2)$-norm for the velocity and pressure in the case of $\pk 2/\pk 1$- and $\mini$-finite elements.
Numerical results have reflected these estimates and the robustness of the scheme for high-Reynolds number problems.  

% % %
%% For one-column wide figures use
%\begin{figure}
%% Use the relevant command to insert your figure file.
%% For example, with the graphicx package use
%  \includegraphics{example.eps}
%% figure caption is below the figure
%\caption{Please write your figure caption here}
%\label{fig:1}       % Give a unique label
%\end{figure}
%%
%% For two-column wide figures use
%\begin{figure*}
%% Use the relevant command to insert your figure file.
%% For example, with the graphicx package use
%  \includegraphics[width=0.75\textwidth]{example.eps}
%% figure caption is below the figure
%\caption{Please write your figure caption here}
%\label{fig:2}       % Give a unique label
%\end{figure*}
%%
%% For tables use
%\begin{table}
%% table caption is above the table
%\caption{Please write your table caption here}
%\label{tab:1}       % Give a unique label
%% For LaTeX tables use
%\begin{tabular}{lll}
%\hline\noalign{\smallskip}
%first & second & third  \\
%\noalign{\smallskip}\hline\noalign{\smallskip}
%number & number & number \\
%number & number & number \\
%\noalign{\smallskip}\hline
%\end{tabular}
%\end{table}
% % %
 \begin{acknowledgements}
%The first author was supported by JSPS (Japan Society for the Promotion of Science) under Grants-in-Aid for Scientific Research (C) No. 25400212 and (S) No. 24224004 and 
The first author was supported by JSPS (Japan Society for the Promotion of Science) under Grants-in-Aid for Scientific Research (C), No. 25400212 and (S), No. 24224004 and 
under the Japanese-German Graduate Externship (Mathematical Fluid Dynamics) and by Waseda University under Project research, Spectral analysis and its application to the stability theory of the Navier-Stokes equations of Research Institute for Science and Engineering.
%The second author was supported by JSPS under Grant-in-Aid for JSPS Fellows No. 26$\cdot$964.
The second author was supported by JSPS under Grant-in-Aid for JSPS Fellows, No. 26$\cdot$964.
\end{acknowledgements}

%
%

% BibTeX users please use one of
%\bibliographystyle{spbasic}      % basic style, author-year citations
\bibliographystyle{spmpsci}      % mathematics and physical sciences
\bibliography{bibArt2}   % name your BibTeX data base

% Non-BibTeX users please use
%\begin{thebibliography}{}
%
% and use \bibitem to create references. Consult the Instructions
% for authors for reference list style.
%
%\bibitem{RefJ}
% Format for Journal Reference
%Author, Article title, Journal, Volume, page numbers (year)
% Format for books
%\bibitem{RefB}
%Author, Book title, page numbers. Publisher, place (year)
% etc
%\end{thebibliography}
%
\end{document}